\newcommand{\Z}{\mathbb{Z}}
\newcommand{\R}{\mathbb{R}}
\newcommand{\N}{\mathbb{N}}
\newcommand{\Equ}{\operatorname{Equ}}
\newtheorem{theorem}{Theorem}[section]
\newtheorem{lemma}[theorem]{Lemma}
\theoremstyle{definition}
\newtheorem{cor}[theorem]{Corollary}
\theoremstyle{remark}
\numberwithin{equation}{section}
\begin{document}

\title{An ergodic algorithm for generating knots with a prescribed injectivity radius}


\author{Kyle Leland Chapman}




\begin{abstract}
The first algorithm for sampling the space of thick equilateral knots, as a function of thickness, will be described. This algorithm is based on previous algorithms of applying random reflections. It also is an off lattice equivalent of the pivot algorithm. To prove the efficacy of the algorithm, we describe a method for turning any knot into the regular planar polygon using only thickness non-decreasing moves. This approach ensures that the algorithm has a positive probability of connecting any two knots with the required thickness constraint and so is ergodic. This ergodic sampling unlocks the ability to analyze the effects of thickness on properties of the geometric knot such as radius of gyration and probability of unknotting.
\end{abstract}

\maketitle

\section{Introduction}

\subsection{Discussion of the applications of Equilateral Knots}

Knot theory is a discipline which has seen a large number of applications, particularly in recent years. The study of knots and links is useful for disciplines such as biology, polymer physics, and materials enginering. Many of the physical properties of structures in these fields comes from the knotting and linking of molecules rather than the actual chemical structure\cite{McLeish08}. In biology, it was first discovered that the bacteriophage \(\varphi\)X174 has DNA with a closed loop structure\cite{FiersSinsheimer62} similar to that pictured in figure~\ref{fig:DNAPicture} and since then numerous other examples have been found. Further, the large structure of DNA means that even if it is technically possible for two open chain DNA strands to unlink, it is not feasible. This led to the discovery by James C. Wang of enzymes called topoisomerases which alter the knotting of DNA allowing for cellular division and indicating that the knotting of DNA impacts function \cite{Champoux01}\cite{LDW76}. Most attempts at proving facts about applications of knot theory have been restricted to topological knots, where we have eliminated physical factors such as friction, rigidity, torsion, thickness, and arclength. Some work has been done to look at arclength and local homogeneity by focusing on equilateral knots of a fixed number of edges \cite{Millett94}\cite{Randell88b}. Ideally, however, we would like to study knots with more of these properties included. Any information could act as a foundation upon which to make concrete statements about knots with these physical properties. Unfortunately, even randomly sampling the space of knots with these properties poses an incredible challenge. This paper will provide a step towards a better sampling of knots with geometric properties, specifically thickness, homogeneity, and arclength.

\begin{figure}
\includegraphics{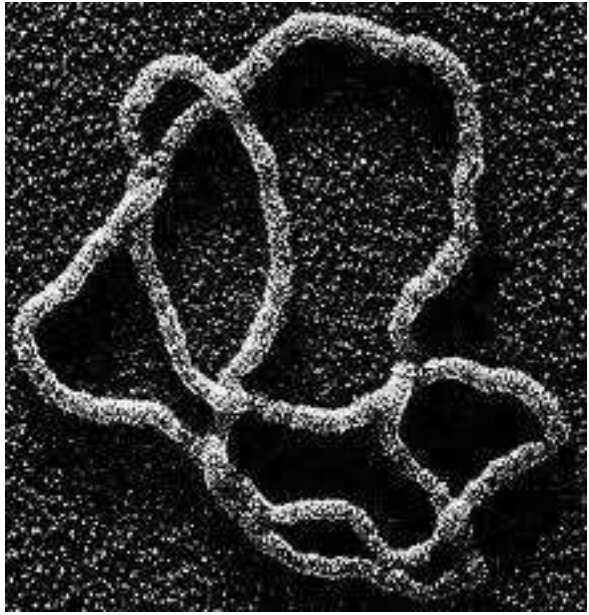}
\caption{An electron micrograph of a six crossing knot in DNA\cite{WDC85}}
\label{fig:DNAPicture}
\end{figure}

\subsection{Previous Thickness Free Generation Methods}

There are a number of methods available for the random generation of equilateral knots \cite{ACM11}. Because we seek to have the arc close to give a ring polygon, we can't merely generate an arbitrary random walk. Most methods come in two stages, an initialization stage, and a randomization stage. Two similar methods for initializing a closed polygon are the generalized hedgehog method, and the triangle method\cite{ACM11}. These involve randomly generating sets of two or three vectors which add to zero, and then randomly ordering these vectors. Alternatively one can start just with the regular planar polygon. Any of these methods give a closed loop for a starting position, but fail to generate an arbitrary polygonal knot. From these initial positions, we can randomize by applying a sequence of moves to a starting polygon, such as polygonal folds, reflections, or crankshafts\cite{ACM11}. All of these moves preserve the fact that the knot closes, as well as the edge lengths and homogeneity. Unfortunately, none of these methods of direct generation or movement through knot space have been rigorously shown to give or converge to the appropriate measure on knot space. There is a method using symplectic geometry which has been shown by Cantarella and Shonkwiler to converge to the correct measure\cite{CS13}. All of these generation methods allow us to sample closed knots with a fixed edge length. These methods do not, however, allow us to have any control over the thickness of the knots generated.

\subsection{Thickness and Excluded Volume Problem}

A knot is traditionally viewed as being an infinitesimally thin strand or loop, represented by mappings of a circle into space. In applications, however, we need knots which have a thickness, which means that they are maps of cylinders into space. This means that portions of the knot take up volume, excluding the rest of knot from occupying that space. The natural naive attempt at sampling knots with a thickness constraint is to merely generate knots using another method, such as the symplectic method, and then restrict to those samples satisfying the thickness constraint. This method is extremely ineffective in practice. This is because the probability of a knot that is randomly generated satisfying interesting thickness constraints can be very low. For example, in a sample of 10,000 random decagons, 6,990 had a thickness less than .01, compared to a maximal thickness of .1539. This problem only gets worse as the number of edges increases. Even in the case of open chains with thickness, where we can generate a chain and use that the subchain already generated will have to have satisfied the thickness constraint, we run into difficulties as it is possible for such generation methods to become trapped. This is a direct result of the problem of excluded volumes, as regions in space may become too filled with portions of the knot which were previously generated to allow for passage by a tube of the chosen thickness constraint. This returns us to a problem faced by closed chains, as you must make numerous attempts to generate each sample. Each of these methods can become extremely time consuming when trying to generate adequate sample sizes.

\subsection{Lattice Methods}

One resolution to the issue of excluded volume has been to restrict to polygons on an integer lattice. This gives efficient sampling, but insufficient work has been done on justifying that the results of sampling lattice knots are representative of the results for off-lattice knots with excluded volume. For those who are familiar with the methods of lattice polygons, many pieces of this paper are reminiscent of the proof that the pivot algorithm is ergodic, however, with a few core differences. These essentially all arise from the fact that lattice polygons of a fixed edge length form a finite state space, while thick polygons off of a lattice form a manifold. This means that it is important to consider the topology of the space and the topology of the moves.

\subsection{Resolution of this Obstacle and Outline of this Paper}

In this paper, we give a method of generating knots with a thickness constraint. It allows us to check the thickness throughout the generation procedure. It is built on the standard reflection algorithm, which starts with the regular planar polygon, and applies a sequence of reflection moves to generate a random knot. We check the thickness after a specific number of reflections to make sure the thickness constraint we desire is still maintained. Verifying that this will randomly sample the whole space of knots of a given thickness is the focus of this paper. While the act of checking thickness increases computation time, this method is still an improvement over the extremely low yield of previous algorithms.

In section 2, we outline all the important definitions. In section 3, we define and prove the algorithm for bringing any knot to the regular planar polygon using thickness non-decreasing moves. In section 4, we use the algorithm from section 3 to prove that a Monte Carlo Markov Chain built from reflections is ergodic for any strictly positive thickness constraint.

\section{Definitions}
\subsection{Knots}

In this paper we use the following definitions. We will be considering knots as piecewise linear equilateral polygons in 3-space, such as the unknot pictured in figure~\ref{fig:SampleKnot}, rather than the more common notion of smooth or topological knots, since the class of piecewise linear equilateral knots is more useful for simulation and applications to the study of macromolecules, and is much more tractable for computer based analysis. A knot with \(n\) edges will be defined to be a sequence of \(n\) points, called its vertexes, \(\{v_i\}_{i\in \Z_{n}}\) in \(\R^3\) with the property that \(\|v_{i+1}-v_{i}\| = 1\) for each index, and for which the line segments connecting pairs of adjacent vertexes intersect in, at most, a common vertex. The indexing is in \(\Z_n\) so that the result is a closed loop. The space of such knots will be denoted \(\Equ(n)\), and be given the subset topology of \(\R^{3n}\). Knots in \(\Equ(n)\) will be considered equivalent if they differ by an affine transformation, which is a composition of translations and rotations. These transformations are the orientation preserving component of the isometry group for \(\R^3\) and so preserve all lengths and angles of the knot. This equivalence is to account for the fact that the knot properties should be independent of where the origin is or the choice of oriented orthogonal basis. We will also, during sampling, consider equivalence up to the full isometry group, since this results in only a double covering of the space we seek to sample. An \emph{arc} of the knot is a subsequence of the knot given by \(\{v_i\}_{i=j}^{k}\). Due to the cyclic indexing, this corresponds to the traditional notion of a subarc of a knot. Given an arc \(a = \{v_i\}_{i=j}^{k}\) we will define the complementary arc to be \(a^{c} = \{v_i\}_{i=k}^{j+n}\).

\begin{figure}
\includegraphics{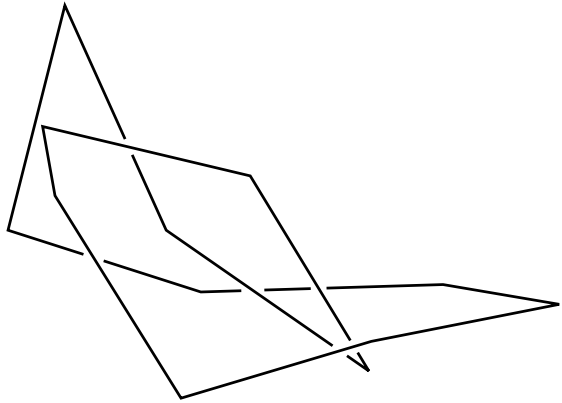}
\caption{A sample random 12-gon}
\label{fig:SampleKnot}
\end{figure}

The \emph{interior angle} at a vertex, \(v_i\) is defined to be the angle between the two vectors \(v_{i+1} - v_i\) and \(v_{i-1} - v_i\). This angle will be called \emph{regular} if it equals \(\pi(n-2)/n\), large if it is larger than regular and small if it is smaller than regular. The \emph{turning angle} of a vertex is \(\pi\) minus the interior angle of the vertex. This gives us a means of discussing the curvature of a piecewise linear knot, as being the turning angle at a vertex. These concepts are demonstrated in figure~\ref{fig:TurningAngle}.

\begin{figure}
\includegraphics{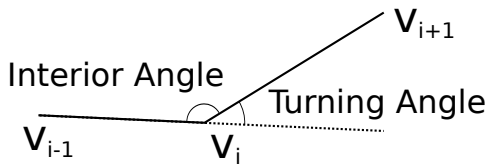}
\caption{The two angles of importance at each vertex}
\label{fig:TurningAngle}
\end{figure}

\subsection{Thickness and Curvature}

The polygonal injectivity radius will be as defined by Rawdon in \cite{Rawdon2000}, which is chosen in such a way as to limit upon the injectivity radius of a smooth curve. Recall that for a smooth curve, the injectivity radius is the largest \(r\) such that a disk of radius \(r\) perpendicular to the curve can be centered at every point of the knot simultaneously without intersections between distinct disks. The two limiting factors to this are the curvature, which will cause close disks to intersect, and long range interactions. For this reason, there are two types of radii we must consider for polygonal knots, taking their minimum. For an equilateral knot \(K\), the short range radius, \(MinRad(K)\), is defined as the minimum over all vertexes of \(d_v = \frac{1}{2} \tan(\theta_v /2)\) as in figure~\ref{fig:ShortDistances}, where \(\theta_v\) is the interior angle at the vertex \(v\). This is the maximum radius of disks which can be placed perpendicular to the midpoints of the two adjacent edges without intersection. This also means that an arc of a circle with radius \(d_v\) can be inscribed in the pair of edges adjacent to \(v\) and meeting it at the midpoints of the edge. For any radius \(r < d_v\), the same can be said, except that the intersection with the edges will be closer to the vertex.

\begin{figure}
\includegraphics{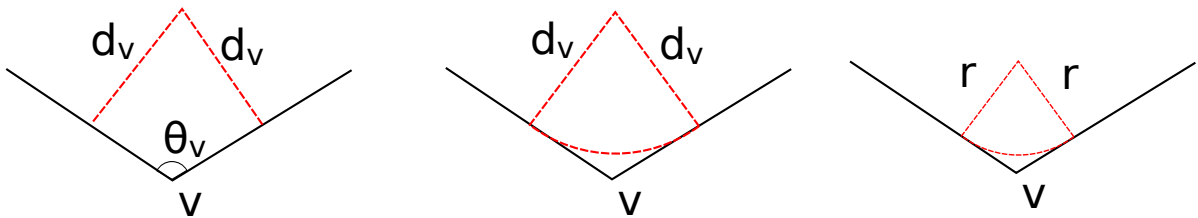}
\caption{The left diagram shows the perpendicular bisectors which intersect at a distance of \(d_v\). The middle and right diagram show inscribing an arc of a circle with radius \(d_v\) and \(r < d_v\).}
\label{fig:ShortDistances}
\end{figure}

A pair of points on the knot, \(a,b\) will be called a \emph{doubly critical pair} if \(a\) is a local extrema of the distance function to \(b\) and \(b\) is a local extrema of the distance function to \(a\), as in figure~\ref{fig:LongDistances}. The set of all such pairs is \(DC(K)\). The long range radius is half of the doubly critical self distance \(dcsd(K) = min_{(a,b)\in DC(K)}(\|a-b\|)\). The \emph{injectivity radius} is defined by \(R(K) = min(MinRad(K),dcsd(K)/2)\). As in the smooth case, \emph{thickness} is the injectivity radius divided by the arclength. We will denote the space of equilateral knots with thickness greater than or equal to \(t\) by \(Equ(n,t)\).

\begin{figure}
\includegraphics{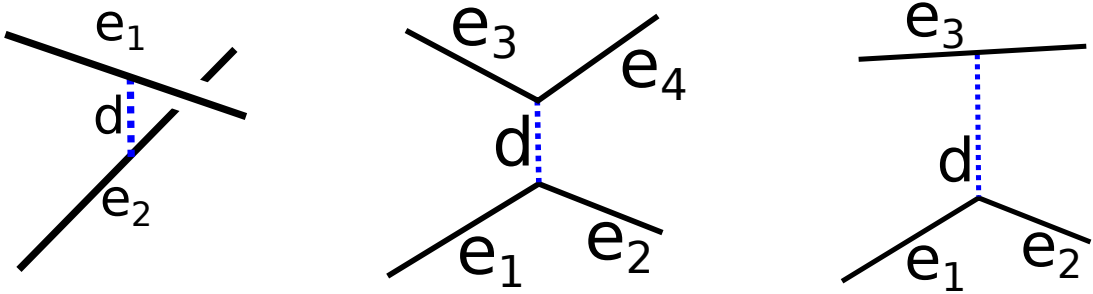}
\caption{The three cases for doubly critical self distance, a pair of skew edges, a pair of vertexes, and a vertex-edge pair.}
\label{fig:LongDistances}
\end{figure}

The \emph{total curvature} between two points of the realization \(tc(a,b)\) is the minimum over the two arcs connecting \(a\) to \(b\) of the sum of turning angles for vertexes between \(a\) and \(b\), including the turning angles at \(a\) and/or \(b\) if either is a vertex. We will use this notion of curvature by applying Schur's theorem for sectionally smooth curves.

\begin{theorem}[Schur's Theorem for Sectionally Smooth Curves \cite{Chern70}]
\label{thm:Schur}
Suppose \(C\) and \(C^*\) are two sectionally smooth curves of the same length parametrized by arclength, with \(\kappa,\kappa^*\) being their curvatures within the smooth sections, \(\alpha,\alpha^*\) being their turning angles at the vertexes between smooth section and \(C\) together with its chord forming a simple convex planar curve. If \(\kappa^*(s) \leq \kappa(s)\) and \(\alpha^*(v) \leq \alpha(v)\), then the end to end distance of \(C\) is less than or equal to the end to end distance of \(C^*\).
\end{theorem}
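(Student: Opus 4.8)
The plan is to run the classical tangent-angle and projection argument for Schur's theorem, adapted so that the direction of the tangent is recorded by a monotone function of \emph{bounded variation} whose absolutely continuous part is governed by \(\kappa\) and whose jumps are the turning angles \(\alpha\). First I would fix the common arclength parameter \(s\in[0,L]\) and, for each curve, describe the unit tangent \(T(s)\) (respectively \(T^*(s)\)); within a smooth section its direction turns at rate \(\kappa\), and at each vertex \(v\) its direction jumps by the turning angle \(\alpha(v)\). The end-to-end vector is recovered by integrating the unit tangent,
\[
C(L)-C(0)=\int_0^L T(s)\,ds,
\]
and this identity is unaffected by the jumps, since \(T\) is defined almost everywhere and the curve is rectifiable.

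Next I would exploit the hypothesis that \(C\) together with its chord is a simple convex planar curve. Convexity forces the tangent direction of \(C\) to turn monotonically, and the total turning of the arc to be at most \(\pi\) (the two exterior angles where the arc meets the chord account for the remaining turning of the closed convex curve). Hence there is a parameter \(s_1\) at which the tangent of \(C\) is parallel to the chord; if \(s_1\) lands at a vertex, the chord direction lies in the angular gap of the jump there, a case I would record separately. Writing \(u\) for the chord direction and \(\phi(s)\) for the angle between \(T(s)\) and \(u\), monotonicity gives \(\phi(s_1)=0\) and \(|\phi(s)|\le\pi\) for all \(s\), and projecting \(C\) onto its chord yields the exact formula
\[
\|C(L)-C(0)\|=\int_0^L \langle T(s),u\rangle\,ds=\int_0^L \cos\phi(s)\,ds.
\]
For \(C^*\) I would project instead onto \(u^*:=T^*(s_1)\), the tangent of \(C^*\) at the corresponding parameter, and use that a projection never exceeds a norm:
\[
\|C^*(L)-C^*(0)\|\ge \int_0^L \langle T^*(s),u^*\rangle\,ds=\int_0^L \cos\phi^*(s)\,ds,
\]
where \(\phi^*(s)\) is the angle between \(T^*(s)\) and \(u^*\).

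The comparison then reduces to the pointwise estimate \(\cos\phi^*(s)\ge\cos\phi(s)\). Since the tangent spherical image of \(C^*\) has length equal to its accumulated curvature and turning angles, the spherical triangle inequality gives \(\phi^*(s)\le \int_{s_1}^{s}\kappa^*\,d\sigma+\sum\alpha^*\), the sum running over the vertices between \(s_1\) and \(s\); for the planar convex curve \(C\) the corresponding quantity is exactly \(|\phi(s)|=\int_{s_1}^{s}\kappa\,d\sigma+\sum\alpha\), by monotonicity. The hypotheses \(\kappa^*\le\kappa\) and \(\alpha^*\le\alpha\) then yield \(\phi^*(s)\le|\phi(s)|\), and since both lie in \([0,\pi]\) where cosine is decreasing, \(\cos\phi^*(s)\ge\cos\phi(s)\); integrating over \([0,L]\) and chaining the displayed relations gives \(\|C^*(L)-C^*(0)\|\ge\|C(L)-C(0)\|\). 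I expect the main obstacle to be the bookkeeping at the vertices: making the tangent direction a well-defined monotone function of bounded variation, handling the case where the parallel-to-chord parameter \(s_1\) falls at a jump, and verifying that \(\phi^*(s)\le|\phi(s)|\) holds with the continuous (curvature) and jump (turning-angle) contributions controlled simultaneously — in particular that the non-planarity of \(C^*\) is absorbed cleanly by the spherical-image length bound. The smooth-section behavior and the final cosine estimate are then routine.
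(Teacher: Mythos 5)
The first thing to say is that the paper does not prove this statement at all: it is imported verbatim from Chern's survey and used as a black box (only its consequence that certain distance-nondecreasing maps preserve thickness is ever exercised). So there is no in-paper argument for you to diverge from; what you have written is a reconstruction of the classical Schur--Schmidt projection proof, which is essentially Chern's own argument for the sectionally smooth case, and it is essentially correct. The skeleton is right: parametrize both curves by the common arclength, locate \(s_1\) where the tangent of the convex planar arc is parallel to its chord, compute the chord length of \(C\) exactly as \(\int_0^L\cos\phi(s)\,ds\), bound the chord of \(C^*\) below by projecting onto \(T^*(s_1)\), and reduce everything to the pointwise comparison \(\cos\phi^*(s)\ge\cos\phi(s)\) via the spherical-image length bound \(\phi^*(s)\le\int_{s_1}^{s}\kappa^*+\sum\alpha^*\), the hypotheses \(\kappa^*\le\kappa\), \(\alpha^*\le\alpha\), and the fact that the planar convex curve attains the corresponding bound with equality.

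One justification is mis-stated, although the conclusion you draw from it is true and is all the argument needs. The total turning of the arc \(C\) need \emph{not} be at most \(\pi\): a circular arc of, say, \(340^\circ\) together with its short chord still bounds a convex region, and the total curvature of that arc far exceeds \(\pi\). What convexity of the closed curve actually gives is that the two exterior angles where the arc meets the chord are nonnegative, which is equivalent to saying that the turning accumulated on \emph{each side} of \(s_1\) separately is at most \(\pi\); since \(\phi(s_1)=0\) and \(\phi\) is monotone on each side of \(s_1\), that is precisely the statement \(|\phi(s)|\le\pi\) you need before invoking monotonicity of cosine on \([0,\pi]\). With that repair, and the vertex bookkeeping you already flag (take \(T\) to be, say, right-continuous of bounded variation --- harmless since it only appears under an integral --- and, if \(s_1\) falls at a jump, let \(u\) and \(u^*\) be the chord direction and any unit vector in the corresponding angular gap of \(T^*\)), the proof goes through.
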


One consequence of Schur's theorem is that if we have an arbitrary curve \(C^*\), and we do not decrease the pointwise curvatures and turning angles, that moving to a planar convex curve \(C\) cannot increase end-to-end distance. This condition of having a convex planar curve to compare to is certainly satisfied if the total curvature between points is less than or equal to \(\pi\). The set of points which are separated by at least \(\pi\) in turning angle will be denoted \(TC(K) = \{(a,b)|tc(a,b) > \pi\} \subseteq K\times K\). This definition, together with Schur's theorem, is sufficient to prove a lemma from \cite{Rawdon2000} which we will use in a couple of places.

\begin{lemma}
\label{lem:boundaryTurning}
If \(a,b\) are a pair of vertexes of \(K\) on the boundary of \(TC(K)\), then \(\|a-b\| \geq 2 MinRad(K)\).
\end{lemma}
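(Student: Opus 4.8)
We have two vertices $a, b$ on the boundary of $TC(K)$, the set of point-pairs with total curvature $> \pi$ between them. So $(a,b)$ is on the boundary of this set, meaning the total curvature between $a$ and $b$ is *exactly* $\pi$ (the boundary of "$> \pi$" is "$= \pi$"). We want to show $\|a-b\| \geq 2\,MinRad(K)$.

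**Key idea using Schur's theorem:** The total curvature $tc(a,b)$ is the minimum over the two arcs connecting $a$ to $b$. On the boundary, this minimum equals $\pi$. So there's an arc from $a$ to $b$ with total turning angle exactly $\pi$ (and the other arc has total turning $\geq \pi$).

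Now here's the strategy. Take the arc with total curvature $\pi$. We want to lower-bound its end-to-end distance $\|a-b\|$. By Schur's theorem, to get a lower bound on the end-to-end distance of an arbitrary curve, we compare to a convex planar curve with *smaller* curvatures at every point. Wait, let me re-read Schur's theorem.

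Schur's theorem: If $\kappa^*(s) \leq \kappa(s)$ and $\alpha^*(v) \leq \alpha(v)$ pointwise, then end-to-end distance of $C$ (the convex planar one) $\leq$ end-to-end distance of $C^*$.

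So $C$ is convex planar with *larger* curvatures, and $C^*$ has *smaller* curvatures, and $d(C) \leq d(C^*)$. This means: the convex planar comparison curve gives the *minimum* end-to-end distance among all curves with the same length whose curvatures dominate it.

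So to lower-bound $\|a-b\|$ for our arc, I want to construct a convex planar curve $C$ that has *smaller* curvature everywhere than our arc, is convex and planar, has the same length, and compute its end-to-end distance as a lower bound.

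**What convex planar curve to use?** The total curvature of our arc is $\pi$. A natural comparison: a convex planar curve with total curvature exactly $\pi$. For it to be convex with the chord forming a simple convex curve, total curvature $\leq \pi$ suffices.

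But I need the comparison curve's curvature to be *dominated* by the arc's curvature pointwise. This is tricky because "pointwise" comparison requires careful parametrization.

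**The MinRad connection:** $MinRad(K) = \min_v d_v$ where $d_v = \frac{1}{2}\tan(\theta_v/2)$. The turning angle at $v$ is $\alpha_v = \pi - \theta_v$. So $d_v = \frac{1}{2}\tan((\pi - \alpha_v)/2) = \frac{1}{2}\cot(\alpha_v/2)$.

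Each edge has length 1. The relationship: an arc of a circle of radius $d_v$ can be inscribed meeting the edges at midpoints. This means locally, near each vertex, the polygon "contains" a circular arc of radius $d_v \geq MinRad(K) =: r$.

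**The comparison curve:** Consider replacing our polygonal arc (from $a$ to $b$, total curvature $\pi$) with a smooth curve that, at each vertex, rounds the corner using a circular arc of radius exactly $r = MinRad(K)$. Since $r \leq d_v$ for all $v$, these rounding arcs fit within the edges (meeting before the midpoints). This rounded curve is *sectionally smooth* — actually it can be made smooth (straight segments + circular arcs of radius $r$). Its curvature is either $0$ (on straight parts) or $1/r$ (on the circular arcs), and it has *no turning angles* (it's smooth), with total curvature still $\pi$ (the circular arcs absorb the turning).

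Now compare this rounded curve $C^*$ (smaller/equal curvature since $1/r \geq$ ... hmm, I need to be careful about direction). Let me think about which is $C$ and which is $C^*$.

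**Reformulating:** I'll make $C^*$ our rounded curve and I want a convex planar $C$ with curvature dominated by $C^*$. The rounded curve has total curvature $\pi$.

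Actually, the cleanest approach: since total curvature is $\pi$ and curvature is bounded above by $1/r$ on the rounded curve... Let me construct $C$ as a *semicircle* of radius $r$. A semicircle has total curvature $\pi$ (turning through $\pi$), constant curvature $1/r$, is convex and planar, and its end-to-end distance (diameter) is $2r$.

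**The plan:** The plan is to compare our arc to a semicircle of radius $r = MinRad(K)$ via Schur's theorem.

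\begin{proof}[Proof sketch]
Since $(a,b)$ lies on the boundary of $TC(K)$, the total curvature between $a$ and $b$ equals exactly $\pi$; choose the arc $A$ realizing this minimum, so $A$ has total turning angle $\pi$. Let $r = MinRad(K)$.

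The plan is to apply Schur's theorem with the semicircle of radius $r$ as the convex planar comparison curve $C$.

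\textbf{Step 1: Round the polygonal arc.} First I would replace each interior vertex $v$ of $A$ by a circular arc of radius $r$ tangent to the two incident edges. Since $r \le d_v$ for every vertex (by definition of $MinRad$), each such arc meets the edges at or before their midpoints, so the rounding arcs for adjacent vertices do not overlap and the construction yields a genuine sectionally smooth (in fact $C^1$) curve $C^*$ of the \emph{same length} as $A$. Wait---rounding shortens the curve slightly, so I would instead add straight filler or, more cleanly, reparametrize so lengths match; the essential point is that $C^*$ is smooth, has curvature equal to $0$ or $1/r$ pointwise, and its total curvature is still exactly $\pi$, since rounding converts each turning angle $\alpha_v$ into a circular arc subtending the same angle $\alpha_v$.

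\textbf{Step 2: Compare curvatures.} The comparison curve $C$ is a circular arc (a semicircle) of radius $r$, which is convex, planar, has constant curvature $1/r$, total curvature exactly $\pi$, has the same length as $C^*$, and whose chord together with the arc bounds a convex region. Since every smooth portion of $C^*$ has curvature $0 \le 1/r = \kappa_C$ or $1/r = \kappa_C$, we have $\kappa_{C}(s) \ge \kappa_{C^*}(s)$ after arclength reparametrization---that is, $C$ plays the role of the high-curvature convex curve and $C^*$ the role of the low-curvature curve. Both being smooth, there are no turning angles to compare.

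\textbf{Step 3: Invoke Schur.} By Schur's theorem, the end-to-end distance of the convex planar curve $C$ is at most that of $C^*$. The end-to-end distance of the semicircle $C$ is its diameter, $2r = 2\,MinRad(K)$. Since the endpoints of $C^*$ coincide with $a$ and $b$, this gives
\[
2\,MinRad(K) = \|C\|_{\text{end-to-end}} \le \|C^*\|_{\text{end-to-end}} = \|a - b\|,
\]
which is the desired inequality.
\end{proof}

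**The main obstacle:** The hard part will be Step 1—making the lengths match precisely while maintaining the pointwise curvature domination needed for Schur's theorem. Rounding a corner of turning angle $\alpha$ with a radius-$r$ arc replaces two straight pieces of combined length $2r\tan(\alpha/2)$ (the tangent lengths) by a circular arc of length $r\alpha$; since $r\alpha < 2r\tan(\alpha/2)$, rounding strictly shortens the curve. I must account for this, either by inserting straight segments to restore the length (which only helps, as straight segments have curvature $0 \le 1/r$) or by arguing that a shorter $C^*$ with the same total curvature still has end-to-end distance at least that of the full-length semicircle. The cleanest fix is to compare $C^*$ against a *circular arc of radius $r$ with the same length as $C^*$*, whose total curvature is then at most $\pi$ (since its length is less than that of the semicircle); such an arc is still convex and planar, and its end-to-end distance—being a chord of a radius-$r$ circle subtending an angle $\le \pi$—is computed directly and shown to be $\ge 2r$ only in the limiting semicircle case. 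This suggests the inequality is genuinely tight exactly when the rounding is "full," and the careful bookkeeping of lengths versus total curvature is precisely where the real work lies; the rest follows mechanically from Schur's theorem.
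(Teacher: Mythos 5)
The paper does not actually prove this lemma---it cites it as Lemma~22 of Rawdon \cite{Rawdon2000} and only remarks that the proof ``compares the arc from \(a\) to \(b\) in \(K\) to a planar arc with the \emph{same} turning angles.'' That remark already points at the problem with your version: your comparison curve is a semicircle of radius \(r = MinRad(K)\), and Schur's theorem cannot be applied to that pair of curves. Schur requires \(C\) and \(C^*\) to have the same length and requires \(\kappa_C(s) \geq \kappa_{C^*}(s)\) at the \emph{same} arclength parameter \(s\). Your rounded curve \(C^*\) has length at least \(\pi r\) plus the leftover straight segments, so the lengths do not match; and even if you pad the semicircle with straight tails to equalize lengths, the semicircle's curvature is concentrated in one contiguous arclength block of measure \(\pi r\) while the curvature of \(C^*\) is distributed over several separated blocks, so pointwise domination fails wherever a rounding arc of \(C^*\) sits over a straight tail of \(C\). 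The phrase ``after arclength reparametrization'' in your Step 2 is exactly where the argument breaks: you are not free to slide the curvature of one curve along its arclength to line it up with the other. Your own proposed repair---a radius-\(r\) circular arc of the correct length subtending an angle \(\phi \leq \pi\)---has chord \(2r\sin(\phi/2) \leq 2r\), so Schur then yields a lower bound that is \emph{weaker} than the one you need, as you yourself observe. The inequality you are after does not follow from Schur alone with any single circular comparison curve.

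The missing idea is to take the comparison curve \(C\) to be the planar convex arc with the \emph{identical} curvature data (same edge lengths and same turning angles, or the same rounding profile), so that Schur's hypotheses hold trivially with equality and give \(\|a-b\| \geq d(C)\); the real content of the lemma is then the purely planar claim that a convex planar arc of total turning \(\pi\), all of whose vertices satisfy \(d_v \geq MinRad(K)\), has end-to-end distance at least \(2\,MinRad(K)\). That planar step can be done, for instance, by writing the displacement as \(\int \sin\phi(s)\,ds\) in the coordinate where the initial tangent is horizontal: on the rounding arcs \(ds = r_v\,d\phi\) with \(r_v \geq r\), the tangent angle \(\phi\) sweeps all of \([0,\pi]\) across the arcs, so those pieces alone contribute at least \(r\int_0^{\pi}\sin\phi\,d\phi = 2r\), and the straight pieces contribute nonnegatively. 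Your setup (reading the boundary of \(TC(K)\) as \(tc(a,b)=\pi\), the rounding construction with tangent lengths at most \(1/2\)) is sound, but without this planar estimate the proof has a genuine gap.
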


This lemma is listed in \cite{Rawdon2000} as lemma 22 and has a proof which relies mainly on using Schur's theorem to compare the arc from \(a\) to \(b\) in \(K\) to a planar arc with the same turning angles

This lets us prove another lemma, from \cite{Rawdon2000}, that will make it much easier to determine when thickness has not been decreased. It allows us to expand our consideration from pairs of points which are doubly critical, to those which are separated by enough curvature. This is a much easier collection of points to directly compare.

\begin{lemma}
\label{lem:TCnotDCSD}
For a polygonal knot \(K\) with injectivity radius \(R(K)\), \[R(K) = min( MinRad(K) , min_{(a,b)\in TC(K)}(\|a-b\|)/2)\]
\end{lemma}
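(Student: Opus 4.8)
The plan is to derive the identity from the definition $R(K)=\min(MinRad(K),dcsd(K)/2)$, where $dcsd(K)=\min_{(a,b)\in DC(K)}\|a-b\|$. Abbreviating $A=MinRad(K)$, $B=dcsd(K)/2$ and $C=\min_{(a,b)\in TC(K)}\|a-b\|/2$, the lemma is exactly the statement $\min(A,B)=\min(A,C)$, and I would prove it by establishing the two inequalities separately. The organizing idea is a dichotomy for the pairs that can realize a short self-distance: such a pair is either separated by total curvature exceeding $\pi$, so that it lies in $TC(K)$, or it is \emph{local}, in which case its distance is forced to be at least $2MinRad(K)$ and so is invisible after taking the minimum with $A$.

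First I would show $\min(A,C)\ge\min(A,B)$. Since $K\times K$ is compact and $(a,b)\mapsto\|a-b\|$ is continuous, the infimum defining $C$ is attained on the closed set $\overline{TC(K)}$ by some pair $(a_1,b_1)$. If $(a_1,b_1)$ lies in the interior of $TC(K)$, then it is a joint local minimum of the distance function on $K\times K$, hence each endpoint is a local extremum of the distance to the other, so $(a_1,b_1)\in DC(K)$ and $\|a_1-b_1\|\ge dcsd(K)$, giving $C\ge B$. If instead $(a_1,b_1)$ lies on the boundary, where $tc(a_1,b_1)=\pi$, then Lemma~\ref{lem:boundaryTurning} yields $\|a_1-b_1\|\ge 2MinRad(K)$, i.e. $C\ge A$. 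In either case $C\ge\min(A,B)$, and since $A\ge\min(A,B)$ trivially, the inequality follows.

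For the reverse inequality $\min(A,C)\le\min(A,B)$ it suffices to treat the case $B<A$, since otherwise $\min(A,B)=A\ge\min(A,C)$ holds automatically. Let $(a_0,b_0)\in DC(K)$ realize $\|a_0-b_0\|=dcsd(K)=2B<2A$. The crux is the claim that a doubly critical pair with $tc(a_0,b_0)\le\pi$ must satisfy $\|a_0-b_0\|\ge 2MinRad(K)$; applied in contrapositive to $(a_0,b_0)$, whose distance is strictly below $2A$, it forces $(a_0,b_0)\in TC(K)$. Then $C\le\|a_0-b_0\|/2=B$, and hence $\min(A,C)\le\min(A,B)$.

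The main obstacle is this last claim, which strengthens Lemma~\ref{lem:boundaryTurning} from the boundary value $tc=\pi$ to the whole range $tc\le\pi$. I would prove it by the Schur-comparison technique underlying Lemma~\ref{lem:boundaryTurning}: taking the arc of $K$ between $a_0$ and $b_0$ whose total curvature is at most $\pi$, the hypothesis $tc\le\pi$ guarantees a simple convex planar comparison arc of equal arclength whose pointwise curvatures and turning angles do not exceed those of $K$, so Theorem~\ref{thm:Schur} bounds $\|a_0-b_0\|$ below by the end-to-end distance of that planar arc. The delicate part is verifying that double-criticality forces the chord to meet the arc perpendicularly at both ends, so that the relevant planar comparison genuinely turns through $\pi$ and therefore spans at least $2MinRad(K)$ once the lower bound $MinRad(K)$ on each vertex radius is imposed. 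Two technical points require attention along the way: the open-versus-closed status of $TC(K)$ in the definition of $C$, handled above by passing to $\overline{TC(K)}$ and splitting into interior and boundary cases, and the extension of Lemma~\ref{lem:boundaryTurning} from vertex pairs to the possibly edge-interior boundary minimizer $(a_1,b_1)$, which the same Schur comparison supplies.
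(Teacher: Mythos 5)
The paper does not actually prove this statement; it is imported verbatim from Rawdon's paper as Theorem 10, so your proposal is being compared against a citation rather than an argument. Your high-level architecture is the right one --- setting \(A=MinRad(K)\), \(B=dcsd(K)/2\), \(C=\min_{TC(K)}\|a-b\|/2\) and proving \(\min(A,C)=\min(A,B)\) via the dichotomy that a short self-distance is realized either by a curvature-separated pair or by a ``local'' pair forced to be at distance at least \(2MinRad(K)\). But the proposal has a genuine gap exactly where you place ``the main obstacle,'' and the sketch offered there does not close it. The claim that a doubly critical pair with \(tc(a_0,b_0)\le\pi\) satisfies \(\|a_0-b_0\|\ge 2MinRad(K)\) cannot be extracted from the Schur comparison as you describe it: Theorem~\ref{thm:Schur} compares two curves of equal arclength with pointwise-comparable curvatures and turning angles, so the planar convex comparison arc it produces turns through exactly \(tc(a_0,b_0)\), which may be much less than \(\pi\), and a convex planar arc of small total turning can have arbitrarily small chord no matter how large its radius of curvature. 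The entire content of the claim is converting double-criticality (perpendicularity of the chord at smooth endpoints, a normal-cone condition at vertices) into the missing turning, and Schur's theorem as stated has no input slot for endpoint boundary conditions. One needs a separate argument --- for instance a Fenchel-type count of turning on the closed loop formed by the arc and its chord, or the integral bound \(\int \rho(\phi)\sin\phi\,d\phi\ge 2r\) for a convex arc whose tangent sweeps a half-turn with radius of curvature at least \(r\) --- and carrying this out for polygonal curves, with vertices as possible critical points and with local maxima as well as minima admitted into \(DC(K)\), is precisely the nontrivial part of Rawdon's proof rather than a verifiable detail.

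The first inequality also has a hole in its case analysis. Your dichotomy for the minimizer \((a_1,b_1)\) over \(\overline{TC(K)}\) --- interior point, hence a joint local minimum of distance and so in \(DC(K)\); or boundary point, hence \(tc=\pi\) and Lemma~\ref{lem:boundaryTurning} applies --- is not exhaustive, because \(tc\) is not continuous on \(K\times K\). When an endpoint crosses a vertex \(v\), the turning angle \(\tau_v\) switches from one arc's total to the other's and is counted in both at the instant of crossing, so \(tc\) is only upper semicontinuous and \(TC(K)=\{tc>\pi\}\) need not be open. A minimizer can therefore lie in \(TC(K)\setminus \mathrm{int}(TC(K))\) with \(tc\) strictly greater than \(\pi\); such a point is neither a local minimum of the distance on \(K\times K\) (so membership in \(DC(K)\) does not follow) nor a point where Lemma~\ref{lem:boundaryTurning} applies. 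Combined with the unproved extension of Lemma~\ref{lem:boundaryTurning} from vertex pairs to edge-interior pairs, which you flag but do not supply, both inequalities currently rest on unestablished geometric facts. Since the paper itself treats the lemma as a black box, the honest options are to cite Rawdon's Theorem 10 as the paper does, or to commit to proving the perpendicularity-to-turning lemma in full for polygonal curves.
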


This is listed in \cite{Rawdon2000} as Theorem 10.

\begin{cor}
\label{cor:PlanarThickness}
The injectivity radius of a convex planar polygon \(K\) is \(MinRad(K)\)
\end{cor}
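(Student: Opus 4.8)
The plan is to use Lemma~\ref{lem:TCnotDCSD} to replace the doubly-critical term in the definition of $R(K)$ by the more tractable long-range term indexed over $TC(K)$, and then to show that for a convex planar polygon this term never beats $MinRad(K)$. Concretely, Lemma~\ref{lem:TCnotDCSD} gives
\[
R(K) = \min\!\left(MinRad(K),\ \tfrac12\min_{(a,b)\in TC(K)}\|a-b\|\right),
\]
so it suffices to prove that every pair $(a,b)\in TC(K)$ satisfies $\|a-b\|\ge 2\,MinRad(K)$, under the convention that the minimum over an empty $TC(K)$ is $+\infty$, which already forces $R(K)=MinRad(K)$.

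So the core claim is: on a convex planar polygon, if $tc(a,b)>\pi$ then $\|a-b\|\ge 2\,MinRad(K)$. First I would observe that because $K$ is convex and planar its total curvature is exactly $2\pi$ and the tangent direction rotates monotonically as one traverses the curve; consequently, for a pair in $TC(K)$ \emph{both} complementary arcs must accumulate more than $\pi$ of turning, since the minimal arc already exceeds $\pi$ by definition and it is the smaller of the two. This monotone rotation is what lets me bring Schur's theorem (Theorem~\ref{thm:Schur}) and Lemma~\ref{lem:boundaryTurning} to bear: along the minimal arc from $a$ to $b$ I would locate the sub-configuration whose accumulated turning is exactly $\pi$, i.e.\ a pair on the boundary of $TC(K)$, to which Lemma~\ref{lem:boundaryTurning} applies and yields a separation of at least $2\,MinRad(K)$.

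The remaining step is to transfer this boundary separation back to the endpoints $a,b$ themselves. The key sub-fact I would establish is that for a convex planar arc of total curvature at most $\pi$ the chord length is a monotone increasing function of arclength, equivalently that the tangent-chord angle stays acute while the tangent turns by at most $\pi$; the clean model case is the circular arc, where the chord $2\sin(\theta/2)$ grows until the semicircle is completed at $\theta=\pi$. Using this monotonicity on the appropriate curvature-$\pi$ sub-arc, together with convexity to guarantee the comparison arc is simple and convex as Schur requires, I can bound $\|a-b\|$ below by the boundary separation and hence by $2\,MinRad(K)$.

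I expect the main obstacle to be precisely this last transfer: pairs lying in the interior of $TC(K)$, where both arcs turn strictly more than $\pi$, and pairs whose members are not vertices, fall outside the literal hypotheses of Lemma~\ref{lem:boundaryTurning}, and the chord length need not remain monotone once the accumulated curvature passes $\pi$. Handling these cases — by carefully choosing which curvature-$\pi$ sub-arc to compare against and invoking convexity to rule out the curve doubling back toward itself — is where the real work lies; once the separation bound holds for all of $TC(K)$, the corollary follows immediately from the displayed reduction.
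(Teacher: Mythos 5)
Your opening reduction via Lemma~\ref{lem:TCnotDCSD} is exactly the paper's, and you even write down the observation that finishes the proof --- then walk past it. You correctly note that for a pair \((a,b)\in TC(K)\) \emph{both} complementary arcs would have to accumulate more than \(\pi\) of turning. But for a convex planar polygon the turning angles are all positive and sum to \(2\pi\), and \(tc(a,b)\) is by definition the \emph{minimum} over the two arcs of their accumulated turning; since the two arcs between them account for the total turning \(2\pi\), the smaller of the two cannot exceed \(\pi\). Hence no pair satisfies \(tc(a,b)>\pi\): the set \(TC(K)\) is empty, the long-range term in Lemma~\ref{lem:TCnotDCSD} is a minimum over the empty set, and \(R(K)=MinRad(K)\) at once. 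That one sentence is the paper's entire proof.

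Everything in your second through fourth paragraphs --- locating a curvature-\(\pi\) sub-configuration along the minimal arc, invoking Lemma~\ref{lem:boundaryTurning} and Schur's Theorem~\ref{thm:Schur}, and transferring a \(2\,MinRad(K)\) separation back to \(a\) and \(b\) --- is an attempt to prove a quantitative bound over a set that has no elements. It is not merely unnecessary: you end by conceding that interior pairs of \(TC(K)\) and non-vertex pairs fall outside the hypotheses of Lemma~\ref{lem:boundaryTurning} and that the chord-monotonicity step fails past curvature \(\pi\), so as written the proposal is an incomplete argument for a vacuous claim. The gap is not in the machinery you would need to finish that argument; it is in failing to notice that the contradiction you already derived (both arcs exceeding \(\pi\) while together they turn only \(2\pi\)) empties \(TC(K)\) and ends the proof.
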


\begin{proof}
In a convex planar polygon \(K\), \(TC(K)\) is empty and so the minimum is determined only by the first term, \(MinRad(K)\).
\end{proof}

This definition of thickness has three nice properties which indicate that it is an appropriate choice for our definition of polygonal thickness.
\begin{itemize}
	\item If \(r < R(K)\), then the radius r neighborhood of K is a torus which deformation retracts onto K\\
	This is theorem 7.1 in \cite{MPR2007}
	\item If a sequence of polygonal knots \(P_n\) converge to a smooth knot \(K\), then \(\lim_{n\rightarrow\infty} R(P_n) = R(K)\)\\
	This is theorem 6 in \cite{Rawdon2000}
	\item R is a continuous function, so in particular \(\Equ(n,t)\) is compact\\
	This is theorem 11 in \cite{Rawdon2000}
\end{itemize}

\subsection{Reflection Moves}

An arc reflection or, simply, a \emph{reflection move} consists of making a choice of plane containing two vertexes \(x_i, x_j\) and reflecting the vertexes \(\{x_k\}_{k=i}^j\) across that plane to get a new knot. For many of the planes we will choose in the next section, the full arc \(\{x_k\}_{k=i}^j\) lies on one side of the plane of reflection, but this is not required for the move to be defined. If parts of the arc lie on both sides of the plane of reflection, this simply means the vertexes in the arc switch sides of the plane of reflection. We also note that reflecting an arc across a plane, and reflecting the complementary arc across the same plane differ by a reflection of the whole knot across that plane, which is a global isometry. Thus, we may choose with each reflection move to reflect either arc connecting the chosen vertexes. 

The result of a reflection move may not in general be a non-singular knot, as self-intersections may be created, but we will still consider such non-embedded curves as knots with zero thickness. We also will talk about neighborhoods of a reflection move, where the space of reflection moves on a knot is given by a disjoint union of circles. This is because each pair of vertexes defines a line which the reflection plane must contain, and the planes through a given fixed line form a circle.

\section{The Algorithm}
We can consider applying random reflection moves to a knot \(K \in \Equ(n,t)\). Sometimes these random reflection moves will result in a new knot in \(\Equ(n,t)\) and sometimes it will not. We will show that if we apply random reflections, then every neighborhood of the regular planar polygon has a positive probability of being reached in finite time from every starting position, without leaving \(\Equ(n,t)\) for more than \(6\) reflections. For this, we will show that for any knot \(K\) in \(Equ(n,t)\), there is a sequence of moves \(m_i\) consisting of up to \(6\) reflection moves each, giving a sequence of knots \(\{K_i\}_0^N\) also in \(\Equ(n,t)\) with \(K=K_0\), \(K_N\) the regular planar polygon, each \(K_{i+1} = m_i(K_i)\) and each \(m_i\) having an open neighborhood \(M_i\) with \(m(K_i) \in Equ(n,t)\) for every \(m\in M_i\).

First, we will show that there is a way of spreading out the knot so that it has a projection which is convex. We then start flattening the knot until it is planar. Finally, we will do moves to make each angle regular, resulting in the regular planar polygon. An outline of the process is shown in figure~\ref{fig:ThreeSteps}. During the first two of these steps, making the knot into a planar convex polygon, we will be changing the knot in a way which makes every pair of points further apart. In these cases we use the following lemma to show we have not decreased thickness.

\begin{lemma}
\label{lem:distance}
If \(K,K' \in \Equ(n)\) and there is an arclength preserving function \(f:K\rightarrow K'\) such that \(d(f(a),f(b)) \geq d(a,b)\) for every pair of points \(a,b \in K\), then the thickness of \(K'\) is greater than or equal to the thickness of \(K\)
\end{lemma}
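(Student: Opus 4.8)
The plan is to use the characterization of injectivity radius from Lemma~\ref{lem:TCnotDCSD}, namely \(R(K) = \min(MinRad(K), \min_{(a,b)\in TC(K)}\|a-b\|/2)\), and to show that each of the two terms in this minimum does not decrease when we pass from \(K\) to \(K'\) through the arclength-preserving, distance-nondecreasing map \(f\). Since thickness is the injectivity radius divided by arclength and \(f\) preserves arclength (both knots are in \(\Equ(n)\) and hence have arclength \(n\)), it suffices to prove \(R(K') \geq R(K)\). I would handle the two terms separately.

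First I would bound the long-range term. The key observation is that the map \(f\), being arclength preserving, should send the set \(TC(K)\) into \(TC(K')\) in a curvature-respecting way — or more carefully, one must relate the total curvature between points of \(K\) to that between their images in \(K'\). Here is the subtlety I expect to be the \emph{main obstacle}: the hypothesis only controls pointwise distances, not curvatures, so a priori the set \(TC(K')\) need not contain \(f(TC(K))\), and indeed the curvature could decrease at vertices under \(f\). The clean way around this is to argue directly in terms of the formula in Lemma~\ref{lem:TCnotDCSD} without trying to track \(TC\) through \(f\): for any pair \((a',b')\in TC(K')\) realizing the long-range minimum for \(K'\), pull it back to a pair \((a,b)=(f^{-1}(a'),f^{-1}(b'))\) on \(K\). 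I would then want to show that the relevant distance on \(K\) is no larger than on \(K'\), which is exactly what the hypothesis \(d(f(a),f(b))\geq d(a,b)\) gives, provided the pair contributes to the injectivity-radius computation on \(K\) as well. Because this requires the pulled-back pair to be appropriately ``critical'' or to lie in the right comparison set, the careful bookkeeping of which pairs control \(R\) is where the real work lies.

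Next I would bound the short-range term \(MinRad(K)=\min_v \frac{1}{2}\tan(\theta_v/2)\). This reduces to showing that no interior angle \(\theta_v\) decreases (equivalently, no turning angle increases) under \(f\). Since \(f\) preserves arclength and hence maps each unit edge to a unit edge, and maps vertices to vertices, the distance between the two vertices adjacent to \(v\) — namely \(\|v_{i+1}-v_{i-1}\|\) — is a monotone function of \(\theta_v\) (larger angle means larger opposite-vertex distance, as these are the endpoints of an isoceles triangle with two unit sides). The hypothesis \(d(f(a),f(b))\geq d(a,b)\) applied to \(a=v_{i-1},\ b=v_{i+1}\) then forces \(\theta_v\) not to decrease, so each \(d_v=\frac{1}{2}\tan(\theta_v/2)\) does not decrease and neither does their minimum. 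This is the clean, purely local half of the argument.

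Combining the two bounds gives \(R(K')=\min(MinRad(K'), \min_{TC(K')}\|\cdot\|/2)\geq \min(MinRad(K), \min_{TC(K)}\|\cdot\|/2)=R(K)\), and dividing by the common arclength \(n\) yields the thickness inequality. I expect the short-range estimate and the final assembly to be routine, while the genuine difficulty is making the long-range comparison rigorous: one must either show that the comparison set of controlling pairs transfers correctly under \(f\), or invoke Schur's theorem (Theorem~\ref{thm:Schur}) together with Lemma~\ref{lem:boundaryTurning} to guarantee that any pair driving the minimum on \(K'\) corresponds to a pair on \(K\) whose distance it dominates. I would aim to phrase the long-range step so that it only ever uses the hypothesis in the direction ``distance on \(K'\) is at least distance on \(K\),'' avoiding any need to track curvature through \(f\) at all.
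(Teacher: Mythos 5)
Your overall skeleton --- reduce to $R(K')\geq R(K)$ via Lemma~\ref{lem:TCnotDCSD} and handle the two terms of the minimum separately --- is exactly the paper's, and your short-range argument (the distance $\|v_{i+1}-v_{i-1}\|$ is monotone in the interior angle at $v$, so no interior angle can decrease) is precisely the paper's first step. The gap is in the long-range half, which you leave unresolved and mislabel as the hard part. You worry that ``a priori the set $TC(K')$ need not contain $f(TC(K))$,'' but that is the wrong containment: to bound $\min_{(a,b)\in TC(K')}\|a-b\|$ from below by $\min_{(a,b)\in TC(K)}\|a-b\|$ you need every pair of $TC(K')$ to pull back into $TC(K)$, i.e.\ $TC(K')\subseteq f(TC(K))$, so that the minimum for $K'$ runs over (the image of) a subset of the pairs competing for $K$ and each such pair has been pushed farther apart. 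And this containment is an immediate consequence of the angle monotonicity you already established in your ``clean, purely local half'': interior angles non-decreasing means turning angles non-increasing, hence $tc(f(a),f(b))\leq tc(a,b)$ for every pair, hence $tc(a,b)\leq\pi$ implies $tc(f(a),f(b))\leq\pi$, which is exactly $TC(K')\subseteq f(TC(K))$. The ``curvature could decrease at vertices'' phenomenon you flag as an obstacle is in fact the mechanism that makes the containment go the right way.

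Once you see this, the chain
\begin{align*}
\min_{(a,b)\in TC(K)}\|a-b\| \;\leq\; \min_{(f(a),f(b))\in TC(K')}\|a-b\| \;\leq\; \min_{(f(a),f(b))\in TC(K')}\|f(a)-f(b)\| \;=\; \min_{(a,b)\in TC(K')}\|a-b\|
\end{align*}
finishes the long-range bound in two lines, with no need for Schur's theorem, Lemma~\ref{lem:boundaryTurning}, or any analysis of which pairs are ``critical.'' So: same route as the paper, correct and complete on the short-range term, but the long-range step you defer as the genuine difficulty is already implied by what you proved, and your stated worry points at the reverse (and irrelevant) inclusion.
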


\begin{proof}
First, consider the interior angles. We will examine a pair of points, \(a,b\), one immediately before and one immediately after a vertex, \(v\). The distance between these two points cannot decrease, while their distance to the vertex is preserved. These quantities determine the interior angle of that vertex and show that it cannot decrease. Thus, the interior angles are all non-decreasing.
	
Second, we use Lemma~\ref{lem:TCnotDCSD} which says that rather than only checking long range thickness at points which locally minimize distance, we can look at the long range thickness between any pair of points separated by a total of \(\pi\) turning angles. We have already established that each interior angle is non-decreasing, which means that the turning angles are non-increasing. This tells us that if \(tc(a,b)\leq \pi\) then \(tc(f(a),f(b))\leq \pi\), and so \(TC(K') \subseteq f(TC(K))\). Further, since every pair of points is made no closer, we have the following
	
\begin{align}
	& min_{(a,b)\in TC(K)}(\|a-b\|)\\
	\leq \:& min_{(f(a),f(b)) \in TC(K')}(\|a-b\|) \\
	\leq \:& min_{(f(a),f(b)) \in TC(K')}(\|f(a)-f(b)\|) \\
	= \:& min_{(a,b)\in TC(K')}(\|a-b\|)
\end{align}
Therefore the long range thickness is non-decreasing. Thus, the result is shown.
\end{proof}

\begin{figure}
\includegraphics{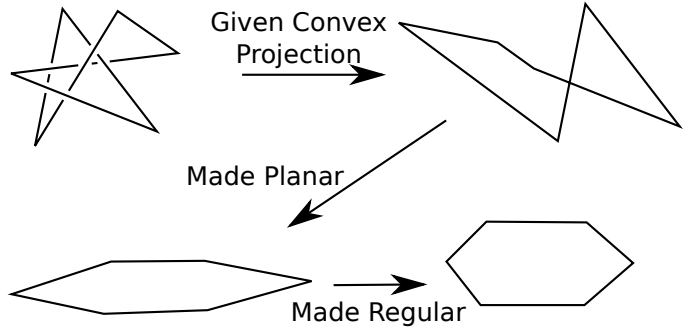}
\caption{An example of three steps applied to a random hexagon}
\label{fig:ThreeSteps}
\end{figure}

\subsection{Convex Projections}

Throughout this portion of the paper, we would like to know that expanding the knot does not decrease the thickness. In practice, this amounts to showing the injectivity radius is not decreased, since throughout the process the arclength is preserved. We will use the following lemma multiple times.

\begin{lemma}
\label{lem:Thickness}
If \(r\) is a reflection move on a knot \(K\) across a plane \(P\) with \(P\) not intersecting the interior of the convex hull of \(K\), then the thickness of \(r(K)\) is no less than the thickness of \(K\).
\end{lemma}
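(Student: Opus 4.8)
The plan is to reduce the statement to Lemma~\ref{lem:distance} by exhibiting the reflection move itself as the required arclength-preserving map and checking that it never decreases pairwise distances. The first observation I would record is that the hypothesis on \(P\) is exactly what guarantees the whole knot lies on one side of the plane: since \(P\) does not meet the interior of the convex hull of \(K\), and the convex hull is a connected convex body, the hull must lie in one of the two closed half-spaces bounded by \(P\). In particular every point of \(K\) lies in that closed half-space. Fixing a unit normal \(\mathbf{n}\) to \(P\) and writing \(h(x)\) for the signed distance from \(x\) to \(P\), I can orient \(\mathbf{n}\) so that \(h(x) \geq 0\) for every \(x \in K\).

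Next I would define the map. Let \(\rho\) denote the reflection of \(\R^3\) across \(P\), and let \(f \colon K \to r(K)\) act as \(\rho\) on the reflected arc \(\{x_k\}_{k=i}^j\) and as the identity on the complementary arc. This is well defined and continuous because the two arcs meet only at the endpoints \(x_i, x_j\), which lie on \(P\) and are therefore fixed by \(\rho\); it is arclength-preserving because both \(\rho\) and the identity are isometries. Thus \(f\) is precisely the candidate map required by Lemma~\ref{lem:distance}, and it remains only to verify the inequality \(d(f(a), f(b)) \geq d(a,b)\) for all \(a, b \in K\).

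For the distance check I would split into cases according to which arcs contain \(a\) and \(b\). If \(a\) and \(b\) lie on the same arc, then \(f\) restricts to an isometry there (either the identity or \(\rho\)), so \(d(f(a),f(b)) = d(a,b)\). The only genuine case is when \(a\) lies on the reflected arc and \(b\) on the complementary arc, so that \(f(a) = \rho(a)\) and \(f(b) = b\). Here I would use \(\rho(a) = a - 2h(a)\mathbf{n}\) together with \(\mathbf{n}\cdot(a - b) = h(a) - h(b)\) to compute
\begin{align}
d(\rho(a), b)^2 &= \|a - b\|^2 - 4h(a)\,\mathbf{n}\cdot(a-b) + 4h(a)^2 \\
&= \|a - b\|^2 + 4\,h(a)\,h(b).
\end{align}
Since both points lie in the chosen half-space we have \(h(a) \geq 0\) and \(h(b) \geq 0\), so the correction term \(4h(a)h(b)\) is nonnegative and \(d(\rho(a),b) \geq d(a,b)\).

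Combining the three cases shows that \(f\) satisfies the hypotheses of Lemma~\ref{lem:distance}, which then yields immediately that the thickness of \(r(K)\) is at least the thickness of \(K\). The main point of the argument is the half-space observation of the first step: everything downstream is the elementary signed-distance identity, but it delivers the needed inequality only because the hypothesis on the convex hull forces \(h(a)\) and \(h(b)\) to share a sign. I would expect the only delicate point to be the degenerate situation where the convex hull has empty interior in \(\R^3\) (for instance when \(K\) is already planar); there the half-space claim should be phrased so that \(K\) still lies weakly on one side of \(P\), which is all the distance computation actually uses.
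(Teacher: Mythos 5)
Your proposal is correct and follows essentially the same route as the paper: both reduce to Lemma~\ref{lem:distance}, use the hypothesis on \(P\) to place \(K\) in a single closed half-space, and then verify the distance inequality only in the case where exactly one of the two points is reflected. Your coordinate-free identity \(d(\rho(a),b)^2 = \|a-b\|^2 + 4h(a)h(b)\) is exactly the paper's computation \((z+z')^2 \geq (z-z')^2\) after choosing \(P\) to be the \(x\)-\(y\) plane, so the two arguments coincide.
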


\begin{proof}
By Lemma~\ref{lem:distance}, it suffices to show that the distance between any two points is not decreased by \(r\). We can also conjugate by an affine transformation, which means that without loss of generality we can choose \(P\) to be the \(x-y\) plane, and \(K\) living in the upper half space. If both points are fixed or both points move, then the distance between them is unchanged, so it suffices to consider pairs of points where exactly one of the two is moved. We denote these two points \((x,y,z)\) and \((x',y',z')\) with \(z,z' \geq 0\). Thus, the lemma has been reduced to showing that \(d((x,y,z),(x',y',z')) \leq d((x,y,z),(x',y',-z'))\).
\begin{align}
z, z' &\geq 0 \\
2zz' &\geq -2zz'\\
z^2+2zz'+z'^2 &\geq z^2 - 2zz'+z'^2\\
(z+z')^2 &\geq (z-z')^2\\
(x-x')^2+(y-y')^2+(z-(-z'))^2 &\geq (x-x')^2+(y-y')^2+(z-z')^2\\
d((x,y,z),(x',y',-z'))^2 &\geq d((x,y,z),(x',y',z'))^2
\end{align}
Thus, the distance between pairs of points is not decreased by \(r\) and so the thickness of \(r(K)\) is no less than the thickness of \(K\).
\end{proof}

We now proceed with applying a sequence of reflection moves to a knot \(K\) so that some projection \(p(K)\) is convex. We choose the orthogonal projection into the \(x-y\) plane, giving us \(p(K) \subseteq \R^2\). We want to choose a fixed orthogonal projection for two reasons. First, it is useful that a reflection in the plane will correspond to a reflection in space, which requires that the projection is orthogonal. Second, it is important that we have a well defined and consistent set of minimum height vertexes. This means that we may apply a rotation to the whole knot before beginning, but once we have made a choice of projection, we must stick to it. This, unfortunately, will prevent us from using arguments about general position, but such methods can be safely avoided. Because we have done nothing special to the knot beforehand, this map may be terribly non-injective and perhaps is not even an immersion. Regardless, it is a piecewise linear map into the plane, about which many results are known. Our preliminary goal will be to apply reflection moves to \(K\) so as to make \(p(K)\) convex, while maintaining certain properties along the way. 

We need a few definitions for this section specifically. 
\begin{itemize}
	\item A vertex of a polygon in \(\R^2\) is \emph{exposed} if it is also a vertex of the convex hull.
	\item A pair of vertexes is an \emph{exposed pair} if they are both exposed and share a line which does not intersect the interior of the convex hull and which does not contain either arc connecting them.
	\item A map \(f:S^1\rightarrow \R^2\) is \emph{convex} if it is an embedding onto the boundary of the convex hull of its image.
	\item A map \(f:S^1\rightarrow \R^2\) a map is \emph{nearly convex} if its image is contained in the boundary of its convex hull
	\item A nearly convex map is \emph{exposed} if the pre-image of each vertex of the convex hull of the image is connected.
	\item A map \(f:S^1\rightarrow \R^2\) is \emph{subdimensional} if the convex hull of its image has empty interior.
\end{itemize} 

We can be visualize these using figure~\ref{fig:ConvexityTypes}. A nearly convex map might run back and forth through a proper subset of the boundary of the convex hull or zig zag back and forth within an edge before continuing. An exposed polygon, on the other hand, will have eliminated the cases such as moving back and forth through a proper subset or winding about the convex hull multiple times, but we may still have zig-zagging within an edge of the boundary of the convex hull. We will quote a theorem of two-dimensional geometry before proving a three dimensional analog.

\begin{figure}
\includegraphics{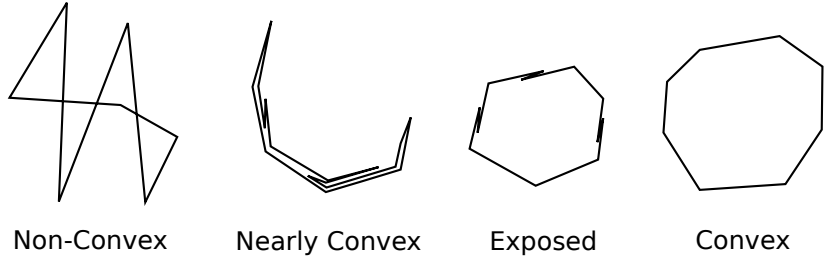}
\caption{Examples of the types of convexity defined in this section. Overlapping edges are separated for clarity.}
\label{fig:ConvexityTypes}
\end{figure}

\subsubsection{Gr\"{u}nbaum-Zaks Theorem}

We will be basing our argument about three dimensional knots on the following theorem.

\begin{theorem}
\label{thm:GrZaks}
Every polygon in the plane, not necessarily embedded, can be transformed into an exposed polygon by a finite sequence of reflections, determined at each step by an exposed pair of vertices
\end{theorem}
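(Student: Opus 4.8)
The plan is to treat this as a convexification result in the spirit of the Erd\H{o}s--Nagy theorem, using the area of the convex hull as a monovariant. Write $H(P)$ for the convex hull of the polygon $P$. The key geometric observation is that the line $\ell$ determined by an \emph{exposed pair} is a supporting line of $H(P)$, so all of $P$ lies in the closed half-plane on one side of $\ell$; reflecting either connecting arc across $\ell$ therefore sends every point of that arc lying strictly off $\ell$ to a point strictly outside $H(P)$. Since an exposed pair is required to have neither arc contained in $\ell$, the arc we reflect always has such a point, so the reflection strictly enlarges the hull, and the hulls produced form a nested increasing sequence. Because reflections preserve all edge lengths, the perimeter is constant and the areas are bounded above isoperimetrically. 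So I would reduce the theorem to two claims: (i) \emph{existence} --- whenever $P$ is not exposed there is an exposed pair whose reflection strictly enlarges $H(P)$; and (ii) \emph{finiteness} --- this strictly increasing process halts after finitely many steps.

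For existence, I would first pin down the exposed pairs concretely: since a line through two hull vertices avoids the interior of $H(P)$ only when those vertices are the endpoints of a common hull edge, an exposed pair is a pair of polygon vertices sitting at the endpoints of a hull edge such that neither connecting arc lies entirely along that edge. I would then prove the contrapositive: if no area-increasing exposed-pair reflection is available, $P$ is already exposed. This splits into cases. If some point of $P$ is strictly interior to $H(P)$, then, since every hull corner is itself a polygon vertex, some hull edge has a connecting arc dipping off its edge line, and reflecting that arc enlarges the hull. If $P$ is \emph{nearly convex} but winds around $\partial H(P)$ more than once (so some corner preimage is disconnected), I would take the pair consisting of two corner-occurrences whose two connecting arcs both leave the edge line --- as already happens in the doubly wound example --- which is a legitimate exposed pair, and reflecting again pushes an arc of $\partial H(P)$ strictly across $\ell$ and hence outside. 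The \emph{subdimensional} case, where $H(P)$ has empty interior, is handled separately using the single supporting line. The upshot is that the process can always continue unless $P$ is genuinely exposed, so it can only terminate at an exposed polygon.

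The hard part will be finiteness. A strictly increasing, bounded monovariant does not by itself force termination, and here the usual shortcut is unavailable: for flip\emph{turns} the multiset of edge directions is preserved, leaving only finitely many combinatorial types and hence immediate termination, but a reflection across $\ell$ replaces each edge direction by its mirror image in $\ell$, so the directions --- and the combinatorial type --- can proliferate without bound. I would therefore argue finiteness by compactness: an infinite reflection sequence would, by the fixed perimeter and bounded diameter, admit a subsequence of polygons converging to a limit $P_\infty$, whose hull areas converge, forcing the area gained at each late step to tend to zero. The crux, and the step I expect to be genuinely delicate, is to bound the area gained by a single admissible reflection below by a positive quantity depending only on how far $P$ is from being exposed, so that vanishing increments force $P_\infty$ to be exposed and contradict the process not having already stopped. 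This is precisely the subtle point resolved in the Gr\"{u}nbaum--Zaks argument, and I would follow their treatment to close it.
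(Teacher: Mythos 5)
First, a point of comparison: the paper does not prove this statement at all --- it quotes it as Theorem 3 of Gr\"{u}nbaum--Zaks (2001) and only later develops a three-dimensional adaptation (Lemmas~\ref{lem:ExposedPair} through Theorem~\ref{thm:ConvexifyPlanar}), so the relevant benchmark is the mechanism used there. That mechanism is \emph{not} the one you propose, and your central claim is in fact false: reflecting an arc of an exposed pair across the supporting line $\ell$ does \emph{not} produce a nested increasing sequence of convex hulls, and the hull area can strictly decrease. The reason is that, unlike an Erd\H{o}s--Nagy pocket flip, the reflected arc may contain vertices of the convex hull other than the exposed pair; those vertices are carried across $\ell$ and their original positions are vacated, so the old hull need not be contained in the new one. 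Concretely, take $v_i=(0,0)$, $v_j=(1,0)$, $\ell$ the $x$-axis, one arc passing through $(-10,1)$ and the other through $(11,1)$: this is a legitimate exposed pair, the old hull has area $11$, and reflecting either arc across $\ell$ yields a hull of area about $1$. So the monovariant on which both your existence step ("area-increasing exposed-pair reflection") and your finiteness step rest is broken. The quantity that actually is monotone --- and the one the paper uses, via the same computation as Lemma~\ref{lem:Thickness} --- is the distance between each pair of vertices: one endpoint is fixed and the other is reflected across a line having both on the same closed side, so no pairwise distance decreases, and since each arc must contain a vertex strictly off $\ell$, the sum $\mu$ of all pairwise distances strictly increases at every genuine move.

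Your existence step (if the polygon is not exposed, an exposed pair exists) is essentially sound and matches the paper's Lemma~\ref{lem:ExposedPair}. But the finiteness step, which you correctly identify as the crux, is exactly the part you defer to the cited source, so the proposal does not actually close the argument; and even repaired with the correct monovariant, a bounded strictly increasing quantity gives only convergence, not termination. The paper's three-dimensional version shows what is additionally needed: convergence of the vertex positions to a limit polygon (from the monotone bounded pairwise distances), a tie-breaking selection rule for the reflections (first reduce the number of turning-angle-$\pi$ vertices, otherwise maximize $\mu$) to guarantee the limit is exposed, and separate arguments in the full-dimensional and subdimensional cases that the limit is attained after finitely many steps. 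None of that structure is present in your sketch, so as written the proposal has a genuine gap in both of its two main claims.
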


\begin{figure}
\includegraphics{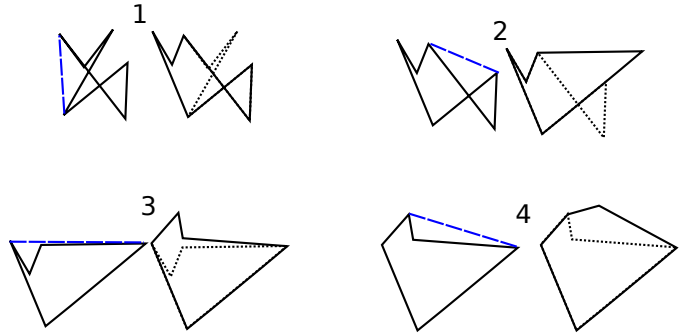}
\caption{An example of a polygonal loop which can be convexified in 4 reflections. Each reflection move is shown using two figures. The left is the polygon before the reflection with a supporting line connecting exposed vertexes highlighted in  blue. The right polygon is the result of the move and shows the previous location of the moved arc as a dotted line. Note that the initial configuration includes a partial doubling back, a type of non-generic intersection which can be accounted for by the theorem.}
\label{fig:Convexify}
\end{figure}

This theorem was proven in 2001 by Gr\"{u}nbaum and Zaks\cite{Grunbaum2001333}, appearing as Theorem 3. We will use the ability to convexify polygonal loops in the plane in Theorem~\ref{thm:Convexify} to make the orthogonal projection of the knot into the \(x-y\) plane exposed. We require the argument of this theorem which is stronger than earlier results such as the Erd\H{o}s-Nagy Theorem\cite{Toussaint2005219}, which applies to embedded polygons, because we need to be able to consider intersections in the projection, such as multiple vertexes going to the same point, or edges intersecting in segments as in Figure~\ref{fig:Convexify}. Our needs also require different choices of vertexes.

We prove a sequence of lemmas that, when taken together, prove the desired theorem about knots in space. We first show that if the projection of our knot isn't exposed, it has an exposed pair.

\begin{lemma}
\label{lem:ExposedPair}
If \(p(K)\) is a polygon in the plane which is not exposed, then it has an exposed pair.
\end{lemma}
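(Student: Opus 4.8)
The plan is to translate the existence of an exposed pair into a statement about a single edge of the convex hull, and then to locate a suitable edge from the way in which $p(K)$ fails to be exposed. Write $H$ for the convex hull of the image of $p(K)$, and first set aside the degenerate possibility that $H$ is lower dimensional (a point or a segment); in the full-dimensional case every supporting line meets $\partial H$ in a single face, so a line through two \emph{distinct} exposed vertices that misses the interior of $H$ can only be the line $\ell_e$ spanned by an edge $e=\overline{PQ}$ of $H$. Thus an exposed pair is exactly a choice of polygon vertices $x_i$ over $P$ and $x_j$ over $Q$ for which neither arc joining them lies on $\ell_e$.

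The key reduction I would record is the following reformulation. Fix a hull edge $e=\overline{PQ}$ with line $\ell_e$, regard $p(K)$ as a map $S^1\to\R^2$, and let $F=\{t\in S^1 : p(K)(t)\in\ell_e\}$ be the set of parameters landing on $\ell_e$. Since $\ell_e\cap H=e$, the set $F$ is exactly the part of the curve lying on the segment $e$, and its complement is a union of open \emph{excursions} on which the curve is strictly off $\ell_e$. An arc of $p(K)$ lies on $\ell_e$ precisely when it is contained in $F$, so a pair of vertices over $P$ and over $Q$ is an exposed pair if and only if those two vertices lie in different connected components of $F$. Consequently, producing an exposed pair amounts to exhibiting one hull edge $\overline{PQ}$ together with a visit to $P$ and a visit to $Q$ that are separated, on both sides of $S^1$, by an excursion off $\ell_e$.

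With this reformulation in hand I would argue the contrapositive: assuming no exposed pair exists, I would show that $p(K)$ must be exposed. The hypothesis says that for every hull edge $\overline{PQ}$ all visits to $P$ and all visits to $Q$ lie in a single component of $F$, i.e.\ the curve runs along one connected sub-walk of $e$ covering both endpoints. First I would use this to rule out interior excursions: if some sub-arc of $p(K)$ left $\partial H$ and returned to an edge, the returning visit would sit in a component of $F$ distinct from the one carrying an endpoint traversal, separating an endpoint visit on that edge (or an adjacent edge) and contradicting the assumption; this forces near-convexity. Then, running the same bookkeeping simultaneously on the two edges $\overline{UV}$ and $\overline{VW}$ meeting at a hull vertex $V$ — whose respective $F$-components can intersect only in $p^{-1}(V)$ — I would show the single-component condition forces $p^{-1}(V)$ to be connected for every $V$, which is precisely exposedness, the per-edge sub-walks gluing around $\partial H$ into one monotone traversal of the boundary.

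The main obstacle is that the orthogonal projection is deliberately non-generic: distinct vertices of $K$ may project to a common point, vertical edges of $K$ project to length-zero edges, and the curve may run partway along a hull edge and fold back or double an edge, exactly the phenomena in Figure~\ref{fig:Convexify}. Because general-position arguments are unavailable, the real work is making the component bookkeeping on $F$ robust against coincident vertices and degenerate edges, and checking that the separated endpoint visits are always realized by genuine polygon vertices $x_i,x_j$ rather than by mere boundary points of $F$. The subdimensional case must likewise be handled by hand, since a collinear projection admits no exposed pair of the above form and so must be excluded or removed by the preliminary rotation; confirming that this degeneracy does not genuinely arise is the most delicate point.
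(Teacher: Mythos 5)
Your reduction of ``exposed pair'' to pairs of polygon vertices lying over the two \emph{distinct} endpoints of a hull edge drops exactly the pairs the paper needs in half of its argument, and the omission is fatal in the case you propose to set aside. The definition of an exposed pair does not require the two vertices to have distinct projections: two polygon vertices \(v_i,v_j\) with \(p(v_i)=p(v_j)\) equal to a hull vertex \(V\) form an exposed pair whenever some line touches \(conv(p(K))\) only at \(V\) and neither arc joining \(v_i\) to \(v_j\) is mapped into that line. The paper's proof splits accordingly: when \(p(K)\) is not nearly convex it walks forward from an interior image point to the \emph{first} exposed vertex and produces an edge-type pair much as you describe, but when \(p(K)\) is nearly convex and fails to be exposed it produces precisely such a same-point pair over a hull vertex with disconnected preimage. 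In the subdimensional case this is the \emph{only} kind of exposed pair available (the unique line through the two endpoints of the segment contains the entire image), yet a collinear projection that runs back and forth along the segment is not exposed, so the lemma is not vacuous there. You cannot ``exclude or remove [this] by the preliminary rotation'': the projection direction is fixed once and for all at the start of Section 3, and subdimensional projections are explicitly in play downstream (Lemma~\ref{SubDimTerminates} and the first case of Lemma~\ref{lem:Pushout}). So the degeneracy you flag as ``the most delicate point'' genuinely arises, and your edge-based reformulation cannot handle it without importing the vertex-type pairs.

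In the full-dimensional case your contrapositive can be pushed through, but not by the local bookkeeping you sketch. Knowing that all visits to \(U\) and to \(V\) lie in single components \(C_{\overline{UV}}\) and \(C_{\overline{VW}}\) of the respective preimages does not yet make \(p^{-1}(V)=C_{\overline{UV}}\cap C_{\overline{VW}}\) connected, since two closed arcs of \(S^1\) can intersect in two disjoint sub-arcs. What rescues the argument is global: one first shows that \emph{every} component of \(p^{-1}(\ell_e)\) must contain a visit to an endpoint of \(e\) (a component mapping into the open edge cannot terminate, because the open edge is open in the boundary of the hull), so under your hypothesis each \(p^{-1}(\ell_e)\) is a single arc; these \(m\ge 3\) arcs cover \(S^1\) and meet only in consecutive pairs, and a nerve-type argument on this cover then forces each pairwise intersection \(p^{-1}(V)\) to be connected. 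Likewise your treatment of the not-nearly-convex case needs the paper's device of proceeding from an interior image point to the first exposed vertex: an excursion into the interior that returns to the middle of an edge does not by itself separate any endpoint visits, so separated visits must be exhibited, not assumed. These steps are exactly the ``real work'' you defer, and as written the proposal is a plan with the decisive cases unproved.
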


\begin{figure}
\includegraphics[]{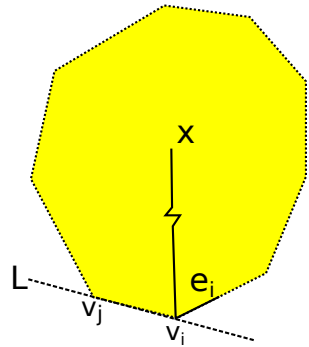}
\caption{An outline of the process for finding an exposed pair on a non-convex polygon}
\label{fig:ExposedPair}
\end{figure}

\begin{proof}
First, consider the case where \(p(K)\) is not nearly convex. This means that there is some point \(x\) of \(p(K)\) on the interior of \(conv(p(K))\), as in figure~\ref{fig:ExposedPair}. This also means \(conv(p(K))\) is not subdimensional, which means that \(p(K)\) has at least three non-collinear exposed vertexes. This point \(x\) need not be a vertex of \(p(K)\). From \(x\), proceed forward to the first exposed vertex \(v_i\). The vertex \(v_i\) has two supporting lines of \(conv(p(K))\) coming out of it. The edge \(e_i\) coming out of \(v_i\) cannot lie in both, so choose a supporting line \(l\) of \(conv(p(K))\) which contains \(v_i\) but does not contain \(e_i\). This line must hit a second exposed vertex \(v_j\). The line \(l\) then does not contain either arc connecting \(v_i\) to \(v_j\), as it does not contain \(e_i\) and it does not contain \(x\), and by the choice of \(v_i\) as the first exposed vertex after \(x\), these must lie in different arcs connecting \(v_i\) to \(v_j\). Therefore, \(v_i\) and \(v_j\) form an exposed pair.

Next, consider the case where \(p(K)\) is nearly convex but not exposed. This means that there is some vertex of \(conv(p(K))\) with disconnected preimage. This tells us that there are \(v_i,v_j\) with \(p(v_i) = p(v_j)\) but with neither arc connecting \(v_i\) to \(v_j\) mapped to \(p(v_i)\). Let \(l\) be a line which intersects \(conv(p(K))\) in only this exposed vertex. Then \(l\) is a line which does not intersect the interior of \(conv(p(K))\) and which does not contain either arc connecting \(v_i\) to \(v_j\) and so \(v_i,v_j\) form an exposed pair.
\end{proof}

Define a plane \(P\) to be a \emph{boundary plane} of \(K\) if it is vertical and does not intersect the interior of \(conv(K)\), as in figure~\ref{fig:SampleReflection}. This means it is the projection preimage of a supporting line of \(conv(p(K))\). We also define \(v_i,v_j\) in \(K\) to be an \emph{edge pair} if they share an edge of \(conv(K)\) and a boundary plane \(P\) which does not contain either arc connecting \(v_i\) to \(v_j\). We next show that if the projection of our knot isn't exposed, then there is an edge pair.

\begin{lemma}
\label{lem:CommonEdge}
If \(K\) is a knot with \(p(K)\) not exposed, then there is an edge pair \(v_i,v_j\).
\end{lemma}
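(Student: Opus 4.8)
The goal is to upgrade Lemma~\ref{lem:ExposedPair}, a purely two-dimensional statement about the projection $p(K)$, into a three-dimensional statement about the knot $K$ itself, producing an \emph{edge pair} $v_i, v_j$ whose boundary plane $P$ will serve as the plane of a thickness-non-decreasing reflection move in the proof of Theorem~\ref{thm:Convexify}. My plan is to obtain the exposed pair from Lemma~\ref{lem:ExposedPair} and then lift it vertically: an exposed pair in the plane, which shares a supporting line $l$ of $\operatorname{conv}(p(K))$, should correspond to a pair in $K$ sharing the vertical \emph{boundary plane} $P = p^{-1}(l)$, which by construction does not meet the interior of $\operatorname{conv}(K)$.

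\medskip

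The main subtlety is the mismatch between the two definitions: Lemma~\ref{lem:ExposedPair} gives a pair of \emph{exposed vertices} of the planar polygon (vertices of $\operatorname{conv}(p(K))$), whereas an edge pair requires the two vertices to share an \emph{edge of $\operatorname{conv}(K)$} in space, not merely of $\operatorname{conv}(p(K))$ in the plane. So first I would apply Lemma~\ref{lem:ExposedPair} to obtain exposed vertices $u, w$ of $p(K)$ lying on a supporting line $l$ that contains neither arc joining their preimages. The preimage $p^{-1}(u)$ is a vertical segment in $K$ (possibly a single point) consisting of one or more knot vertices, and likewise $p^{-1}(w)$; all of these lie in the boundary plane $P = p^{-1}(l)$, since $P$ is exactly the vertical plane over $l$ and $l$ is a supporting line of $\operatorname{conv}(p(K))$, so $P$ does not meet the interior of $\operatorname{conv}(K)$.

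\medskip

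The remaining work is to select, among the knot vertices lying in $P$, two that actually share an \emph{edge} of $\operatorname{conv}(K)$, so that the pair qualifies as an edge pair. Since $P$ is a supporting plane of the three-dimensional convex body $\operatorname{conv}(K)$, the intersection $P \cap \operatorname{conv}(K)$ is a face $F$ of $\operatorname{conv}(K)$, itself a (lower-dimensional) convex polygon whose vertices are among the knot vertices of $K$ that project into $l$. Taking two vertices of $F$ that span an edge of $F$ gives vertices $v_i, v_j$ sharing an edge of $\operatorname{conv}(K)$, and since $P$ is a boundary plane it does not meet the interior of $\operatorname{conv}(K)$, so it serves as the required boundary plane. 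The one point that needs care is the condition that $P$ contains neither arc connecting $v_i$ to $v_j$: this should be inherited from the corresponding separation property of $l$ guaranteed by Lemma~\ref{lem:ExposedPair}, together with the fact that a full arc lying in $P$ would project to a full arc lying in $l$. I expect this last verification — transferring the ``$l$ contains neither arc'' condition down to the specific pair $v_i, v_j$ chosen on the face $F$, rather than to the original exposed pair — to be the main obstacle, since the vertex we pick on $F$ need not be one of the two vertices produced by Lemma~\ref{lem:ExposedPair}; handling this may require either choosing $v_i, v_j$ on $F$ to lie in distinct components of $p^{-1}(l) \setminus \{u, w\}$ or a short direct argument that if both arcs failed the condition the projection would already be forced into a degenerate configuration contradicting non-exposedness.
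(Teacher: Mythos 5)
Your setup matches the paper's: apply Lemma~\ref{lem:ExposedPair}, lift the supporting line $l$ to the boundary plane $P=p^{-1}(l)$, observe that the vertices of $K$ lying in $P$ are the vertices of a vertical face $F$ of $\operatorname{conv}(K)$, and then look for an edge of $F$. But the step you defer at the end --- transferring the ``$P$ contains neither arc'' condition to a pair that actually spans an edge of $\operatorname{conv}(K)$ --- is the entire content of the lemma beyond the previous one, and neither of your two suggested fixes is carried out or obviously works. The difficulty is real: an arbitrary edge of $F$ can fail the condition, e.g.\ if two knot-adjacent vertices $v_m,v_{m+1}$ both lie in $P$ and span an edge of $F$, then one of the two arcs joining them is the single edge $v_mv_{m+1}\subseteq P$, so they are not an edge pair (and reflecting there does nothing). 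Your first suggestion, choosing the pair in ``distinct components of $p^{-1}(l)\setminus\{u,w\}$,'' does not parse into an argument, and your second is only the hope that a contradiction exists.

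The paper closes this gap with a descent along the boundary of $F$ that you should be able to reconstruct: start from the lifted exposed pair $v_i,v_j$, which \emph{does} satisfy the arc condition, and fix one boundary arc of $F$ joining them. If it has no intermediate vertices, $v_i,v_j$ already span an edge of $\operatorname{conv}(K)$ and you are done. Otherwise pick an intermediate vertex $v_k$; it lies on one of the two knot arcs joining $v_i$ to $v_j$, say $B$, which splits as $B=B_1\cup B_2$ at $v_k$. Since $B\not\subseteq P$, at least one of $B_1,B_2$ is not contained in $P$; the corresponding pair ($v_i,v_k$ or $v_k,v_j$) still satisfies the condition, because its complementary arc contains the other original arc $A\not\subseteq P$ as a subarc, and it has strictly fewer intermediate vertices on the boundary of $F$. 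Induction on that count reaches the base case. Without this (or an equivalent) argument your proof is incomplete at exactly the point you flagged.
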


\begin{figure}
\includegraphics[]{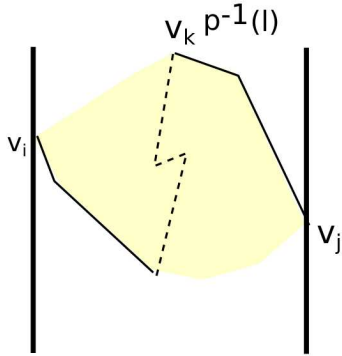}
\caption{An example of a vertical face of the convex hull of the knot, with an exposed pair \(v_i,v_j\), as well as the intermediate vertex which forms the edge pair \(v_i,v_k\).}
\label{fig:EdgePair}
\end{figure}

\begin{proof}
Since \(p(K)\) is not exposed, Lemma~\ref{lem:ExposedPair} tells us there is an exposed pair of vertexes \(p(v_i),p(v_j)\). This exposed pair shares a line \(l\) with \(l\) disjoint from the interior of \(conv(p(K))\), and not containing either arc connecting \(p(v_i)\) to \(p(v_j)\). Looking at projection preimages, as in figure~\ref{fig:EdgePair}, this means that there is a boundary plane \(P = p^{-1}(l)\) with \(P\) containing \(v_i\) and \(v_j\), but with \(P\) not containing either arc connecting them. As their projections are exposed, they must share a vertical face \(F\) of \(conv(K)\), and both lie on the boundary of that face.

Choose one arc around the boundary of \(F\). This will contain some number of intermediate vertexes. If there are no such vertexes, then \(v_i\) and \(v_j\) share an edge of \(conv(K)\) and we are done. Otherwise, choose an intermediate vertex \(v_k\). Denote the arcs connecting \(v_i\) to \(v_j\) by \(A,B\). The vertex \(v_k\) lies on one of those two arcs, and so without loss of generality, suppose it is \(B\). Thus, \(B = B_1\cup B_2\), where \(B_1\) is an arc connecting \(v_i\) to \(v_k\) and \(B_2\) is an arc connecting \(v_k\) to \(v_j\). The arc \(B\) does not lie in \(P\) so either \(B_1\) does not lie in \(P\) or \(B_2\) does not lie in \(P\). If \(B_1\) does not, then \(v_i\) and \(v_k\) are a pair of vertexes in \(P\) which both lie on a boundary arc of \(F\) but with fewer intermediate boundary vertexes. If \(B_2\) is not contained in \(P\), then \(v_k\) and \(v_j\) are a pair of vertexes in \(P\) which both lie on a boundary arc of \(F\) but with fewer intermediate boundary vertexes. Thus, in every case where there is an intermediate vertex on the boundary of \(F\), we can choose new vertexes \(v_i\) and \(v_j\) in \(P\) with neither arc connecting them in \(P\) but with fewer vertexes between them. This allows us to use induction to reduce to the base case when there are no intermediate vertexes and so the result holds.
\end{proof}

\begin{figure}
\includegraphics{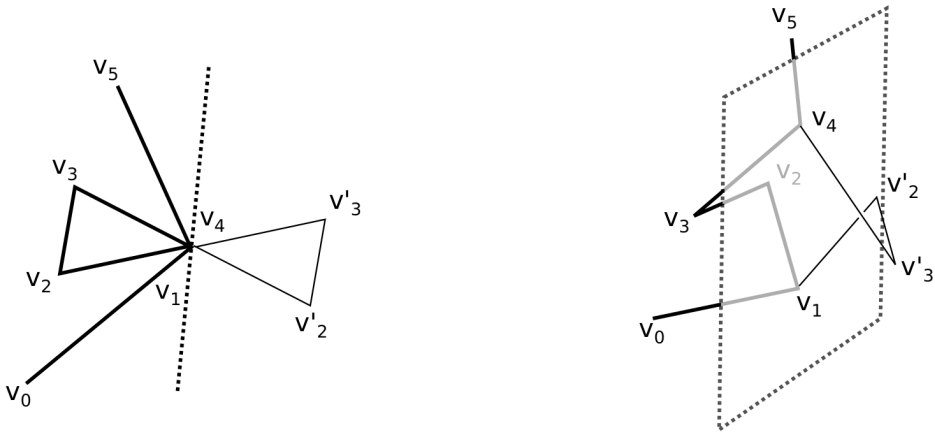}
\caption{An example of a reflection of an arc across a boundary plane. The left diagram is the projection into the plane.}
\label{fig:SampleReflection}
\end{figure}

So far we have shown that if our knot does not have an exposed projection, then there is an edge pair, and so we can reflect one arc connecting the edge pair across a common boundary plane to expands the projection. We now show that this expansion must have a limiting polygon.

\begin{lemma}
If \(K_i\) is a sequence of knots with \(K_{i+1} = K_{i}\) or \(K_{i+1} = r_i(K_i)\) where \(r_i\) is a reflection of an arc connecting an edge pair across a boundary plane, then the sequence \(K_i\) converges to a limit polygon \(K^*\).
\end{lemma}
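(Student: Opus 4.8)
The plan is to extract from the hypotheses a family of bounded, monotone real quantities whose convergence pins down the limit. The decisive observation is that no move in the sequence decreases the distance between any pair of points of the knot. An identity step changes nothing, and a reflection $r_i$ is taken across a boundary plane, which by definition does not meet the interior of $\mathrm{conv}(K_i)$; this is exactly the hypothesis of Lemma~\ref{lem:Thickness}, whose proof establishes that such a reflection weakly increases the distance between every pair of points. Hence for each fixed pair of vertices $v_k,v_l$ the sequence $\|v_k^{(i)}-v_l^{(i)}\|$ is non-decreasing in $i$. It is also bounded above: reflections preserve arclength, so every $K_i$ has perimeter $n$, and any two of its points are joined by an arc of length at most $n/2$, whence $\|v_k^{(i)}-v_l^{(i)}\|\le n/2$ for all $i$. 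A bounded non-decreasing sequence converges, so every entry of the matrix of pairwise distances converges, and therefore the whole distance matrix converges to some matrix $D^*$.

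Next I would promote convergence of the distance matrix to convergence of the configurations. Pass to the space of closed equilateral $n$-gons (allowing self-intersections) modulo the isometry group; since each class has a representative lying in a fixed ball of radius $n/2$, this quotient is compact, so the sequence of classes $[K_i]$ has at least one subsequential limit $[K^*]$. Because the pairwise distances are continuous functions of the configuration and converge to $D^*$ along the entire sequence, every subsequential limit has distance matrix exactly $D^*$. A finite point configuration in $\R^3$ is determined up to an ambient isometry by its matrix of pairwise distances, so any two subsequential limits represent the same class. A sequence in a compact space all of whose subsequential limits coincide is convergent; hence $[K_i]\to[K^*]$. Applying a suitable global isometry to each $K_i$, which alters neither the thickness nor any of the convexity data used elsewhere, we may take the representatives to converge in $\R^{3n}$ to a polygon $K^*$, the desired limit.

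The step I expect to require the most care is this passage from convergence of distances to convergence of configurations, together with making precise the sense in which $K^*$ is \emph{the} limit: the distance matrix determines a polygon only up to the ambient isometry group, so the natural convergence is in the quotient, which is harmless here since the paper already regards knots as equivalent under isometry and the later steps only need a well-defined limiting polygon. One should also verify the two features that make the monotone quantities usable: boundedness genuinely uses the fixed arclength $n$, and monotonicity requires that each non-trivial move be a reflection across a boundary plane so that Lemma~\ref{lem:Thickness} applies, which the definition of an edge pair guarantees. As a consistency check, the same boundary-plane reflections can only enlarge the convex hull of the projection $p(K_i)$, so the area of $\mathrm{conv}(p(K_i))$ is a single bounded monotone scalar witnessing that the expansion halts in the limit, in agreement with the convergence established above.
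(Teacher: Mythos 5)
Your first paragraph matches the paper's opening move exactly: boundary\--plane reflections weakly increase all pairwise distances (the computation inside Lemma~\ref{lem:Thickness}), distances are bounded by half the perimeter, so the distance matrix converges. From there the two arguments diverge. The paper exploits the fact that every reflection plane is vertical, so all \(z\)-coordinates are preserved exactly, and it works with the \emph{projected} distances; it then recovers convergence of positions by a trilateration argument in the plane, split into the case where some triple of projections becomes affinely independent and the degenerate case where everything collapses to a line. You instead pass to the moduli space of equilateral \(n\)-gons modulo isometry, use compactness and uniqueness of subsequential limits, and conclude convergence of the isometry classes. That part of your argument is sound and arguably cleaner than the paper's case split.

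The gap is in your last step, ``applying a suitable global isometry to each \(K_i\) \ldots we may take the representatives to converge in \(\R^{3n}\).'' This does not prove the stated lemma, which asserts that the given sequence \(K_i\) converges, and the parenthetical claim that renormalizing ``alters \ldots\ none of the convexity data used elsewhere'' is false: the entire construction is pinned to one fixed orthogonal projection \(p\) onto the \(x\)-\(y\) plane (the paper is explicit that ``once we have made a choice of projection, we must stick to it''), and Lemmas~\ref{lem:ExposedFiniteTime}, \ref{lem:FullDimTerminates} and~\ref{SubDimTerminates} all reason about \(p(K_n)\), \(p(K^*)\) and the set of minimal-height vertexes of the original sequence. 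A general normalizing isometry changes both the projection and the heights. What your argument is missing is the ingredient the paper leans on to rule out drift of the actual configurations: the heights never change, so it suffices to control the planar projections, and there the limiting distance matrix rigidly determines the positions once a non-degenerate reference triple (or reference pair, in the collinear case) is located. To close your version you would need either to restrict the normalizing isometries to horizontal ones and then argue they can be taken to converge to the identity, or to show directly that the unmodified configurations cannot wander while their distance matrix converges; neither is automatic. (To be fair, the paper's own frame-fixing step is also terse --- it tacitly assumes the three reference projections converge, not merely their mutual distances --- so this is the genuinely delicate point of the lemma, exactly where you predicted.) Finally, your closing ``consistency check'' that the area of \(\mathrm{conv}(p(K_i))\) is monotone is not justified for non-embedded projections and is not needed; the monotone scalar the paper actually uses later is the sum of pairwise distances \(\mu(p(K))\).
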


\begin{proof}
Since the plane of reflection \(P_i\) is vertical, it projects to \(p(P_i) = l_i\). Reflection of an arc across a line which does not intersect the interior of the convex hull does not decrease the distance between pairs of points. Thus, each distance between pairs \(p(v_i)\) and \(p(v_j)\) is increasing, but it is also bounded above by half of the arclength of the knot. Thus, each distance between pairs of vertex projections must converge.

We now consider two cases. First suppose there is a collection of three vertex's projections, \(p(v_i),p(v_j),p(v_k)\) whose pairwise distances converge to a strict triangle inequality. Then there exists an \(N\) so that for all \(n \geq N\), \(K_n\) has \(p(v_i),p(v_j),p(v_k)\) satisfying a strict triangle inequality, meaning those three vertex projections are not collinear. Thus, since the location of a point in a plane can be determined by its distance to any three non-collinear points, then the position of every other vertex projection converges. Since none of the heights were changed, the location of every vertex of \(K_i\) converges and so there is a limit polygon \(K^*\).

In the second case, every triple of vertex projections \(p(v_i),p(v_j),p(v_k)\) have distances which converge to a triangle equality. This means every triple of projections converge to collinearity, since only collinear points satisfy triangle equality. Thus, taking a pair \(p(v_i)\) and \(p(v_j)\) and the line through them, every other vertex must converge to a specific point on that line. Thus, the location of the vertexes projections converge and the heights of the vertexes were unchanged so the location of the vertexes converge giving a limit polygon \(K^*\).
\end{proof}

We now know that a limiting polygon exists. We use information about that limit to get information about elements of the sequence. We first show that vertexes which are exposed in the limit are actually exposed in finite time.

\begin{lemma}
\label{lem:ExposedFiniteTime}
If \(K_i\) is a sequence of knots with \(K_{i+1} = K_{i}\) or \(K_{i+1} = r_i(K_i)\) where \(r_i\) is a reflection of an arc connecting an edge pair across a boundary plane and \(v_j\) is a vertex of the knots which has exposed projection in the limit \(K^*\), then there exists an \(N\) with \(v_j\) having exposed projection in \(K_n\) for all \(n \geq N\).
\end{lemma}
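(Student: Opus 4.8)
The plan is to combine the convergence statement from the previous lemma with the monotonicity of pairwise distances between vertex projections, and to use the fact that a vertex of a convex hull is exactly the strict maximizer of some linear functional.

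First I would record the relevant characterization of exposure: a vertex $v_j$ has exposed projection in a knot precisely when $p(v_j)$ is an extreme point of $conv(p(K))$, equivalently when there is a direction $u \in \R^2$ with $\langle p(v_j), u \rangle > \langle p(v_k), u \rangle$ for every vertex $v_k$ whose projection differs from $p(v_j)$. Since $p(v_j)$ is exposed in the limit $K^*$, I fix such a direction $u$ computed in $K^*$ and set $\delta$ to be the minimum of $\langle p(v_j) - p(v_k), u \rangle$ over the finitely many indices $k$ with $p(v_k) \neq p(v_j)$ in $K^*$; this $\delta$ is strictly positive.

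By the previous lemma we have $K_n \to K^*$, so each projected vertex of $K_n$ converges to its value in $K^*$. For every index $k$ with $p(v_k) \neq p(v_j)$ in $K^*$, continuity of $u$ against the vertex coordinates gives $\langle p(v_j) - p(v_k), u \rangle > \delta/2 > 0$ in $K_n$ once $n$ exceeds some threshold $N_k$; as there are finitely many such $k$, I take $N = \max_k N_k$. The main obstacle is the remaining indices, namely the vertices $v_k$ that collapse onto $p(v_j)$ in the limit, where $p(v_k) = p(v_j)$ in $K^*$: for these the separation tends to $0$ and could a priori become negative at a finite stage, which would wreck the exposure of $p(v_j)$.

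The key step that removes this obstacle is monotonicity. In the proof of the previous lemma it is shown that reflecting an arc across a boundary plane never decreases the distance between two projected vertices (this is the planar instance of the computation in Lemma~\ref{lem:Thickness}, a reflection across a line disjoint from the interior of the convex hull), so the sequence $n \mapsto d(p(v_k), p(v_j))$ measured in $K_n$ is non-decreasing. If $p(v_k) = p(v_j)$ in $K^*$, this non-decreasing sequence converges to $0$ and is therefore identically $0$, whence $p(v_k) = p(v_j)$ in every $K_n$. Such indices thus never produce a point distinct from $p(v_j)$ and present no threat. Combining the two cases, for all $n \geq N$ the point $p(v_j)$ strictly maximizes $\langle \cdot, u \rangle$ among all distinct projected vertices of $K_n$, so it is a vertex of $conv(p(K_n))$ and $v_j$ is exposed in $K_n$. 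I expect the only genuinely delicate point to be the collapsing-vertices case resolved by the monotonicity-forces-constancy argument; once $u$ and $\delta$ are fixed the remainder is a routine continuity estimate.
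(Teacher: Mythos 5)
Your proposal is correct and follows the same basic route as the paper: both arguments pick a line (equivalently a linear functional) witnessing that \(p(v_j)\) is a vertex of \(conv(p(K^*))\) and then use the convergence \(K_n \to K^*\) from the preceding lemma to push that separation back to all sufficiently large finite stages. The difference is that the paper's proof simply asserts the existence of a line ``which separates the limit of \(p(v_j)\) from the limit of all other vertex projections,'' which is false as literally stated whenever some other vertex \(v_k\) has \(p(v_k) = p(v_j)\) in \(K^*\); the paper leaves this collapsing case unaddressed. You isolate exactly this case and dispose of it with the observation that the projected pairwise distances \(d(p(v_k),p(v_j))\) are non-decreasing along the sequence (reflections across boundary planes do not decrease projected distances, as used in the convergence lemma), so a sequence converging to \(0\) is identically \(0\) and such vertices coincide with \(p(v_j)\) at every stage, never threatening exposure. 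This makes your write-up a genuine repair of a small gap rather than just a rephrasing; the rest (fixing \(\delta > 0\) over the finitely many non-collapsing indices and taking \(N = \max_k N_k\)) matches the paper's continuity estimate.
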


\begin{figure}
\includegraphics{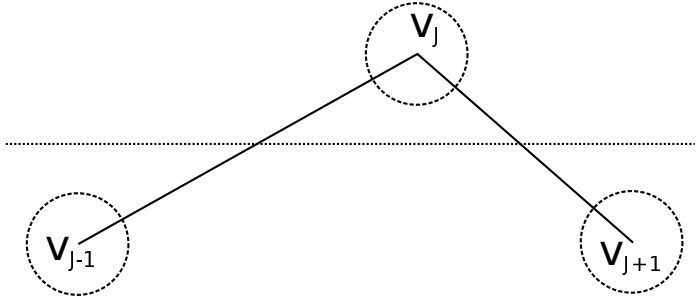}
\caption{A model of a triple of points near a vertex which is exposed in the limit, along with a line separating this exposed vertex from the rest of the knot}
\label{fig:Exposed}
\end{figure}

\begin{proof}
Because \(p(v_j)\) is exposed in the limit, it is a vertex of \(conv(p(K^*))\) so there is a line \(l\) which separates the limit of \(p(v_j)\) from the limit of all other vertex projections, as in figure~\ref{fig:Exposed}. This line \(l\) separates the plane into two open sets. The convergence of each point, means that for each vertex \(v_k\) there is a natural number \(N_k\) with \(p(v_k)\) on the correct side of \(l\) in all \(K_n\) with \(n \geq N_k\). Taking \(N = max(N_k)\), we get that for all \(n \geq N\), \(K_n\) has all vertex projections on the same side of \(l\) as \(K^*\). This means that for all \(K_n\) with \(n \geq N\), there is a line separating \(p(v_j)\) from all other vertexes, so \(p(v_j)\) is an exposed vertex of \(p(K_n)\).
\end{proof}

So far we have allowed any choice of reflection of arc connecting an edge pair. We will now show that if we make a specific choice of reflection, then the limit will be exposed. We define \(\mu(p(K))\) to be the sum over pairs of vertexes \(v_i,v_j\) of the distance between \(v_i\) and \(v_j\).

\begin{lemma}
\label{lem:LimitExposed}
Suppose \(K_i\) is a sequence of knots with \(K_{i+1} = K_{i}\) or \(K_{i+1} = r_i(K_i)\) where \(r_i\) is a reflection of an arc connecting an edge pair across a boundary plane, with \(K_{i+1}\) chosen so that the number of vertexes \(v_j\) with turning angle at \(p(v_j) = \pi\) is reduced if possible, and otherwise chosen so that \(\mu(p(K_{i+1}))\) is maximized. Then, the limit polygon \(K^*\) is exposed.
\end{lemma}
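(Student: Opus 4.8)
The plan is to argue by contradiction, assuming $K^*$ is not exposed, and to extract from the selection rule a configuration change that cannot persist. Two monotonicity facts drive everything. First, every admissible step is a reflection of an arc across a boundary plane, whose projection is a supporting line $l$ of $\mathrm{conv}(p(K_i))$; by the distance computation in the proof of Lemma~\ref{lem:Thickness}, such a reflection never decreases any planar distance, so $\mu(p(K_i))$ is non-decreasing, and it is bounded above because each of the finitely many vertex-pair distances is at most half the arclength. Hence $\mu(p(K_i))$ converges and its successive increments tend to $0$. Second, a reflection alters a projected turning angle only at the two endpoints of the moved arc, so it changes the number of spike vertices (projected turning angle $\pi$) by at most two; since each spike-reducing step lowers this nonnegative integer by at least one, the number of spike-reducing steps through time $N$ is at most $s_0$ plus twice the number of $\mu$-maximizing steps through time $N$. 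Consequently either the sequence is eventually constant, or it contains infinitely many $\mu$-maximizing steps.

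The central claim, in the case where $\mathrm{conv}(p(K^*))$ is two-dimensional, is that a non-exposed projection always admits an edge-pair reflection that strictly increases $\mu$. I would obtain the edge pair $v_i,v_j$ from Lemma~\ref{lem:ExposedPair} and Lemma~\ref{lem:CommonEdge}: it lies on a boundary plane $P$ projecting to a supporting line $l$, with neither arc connecting $v_i$ to $v_j$ contained in $P$. Reflecting either arc across $P$ leaves all pairwise distances fixed except those between a moved and a fixed point, and the computation of Lemma~\ref{lem:Thickness} shows such a distance strictly increases precisely when both of its endpoints lie strictly off $l$. Because neither arc lies in $P$, each arc carries a vertex strictly off $l$, and all of $p(K^*)$ lies on one side of the supporting line $l$; so a moved off-$l$ vertex and a fixed off-$l$ vertex form a pair whose distance strictly grows, and $\mu$ strictly increases. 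The degenerate possibility of a $\mu$-preserving reflection, in which an entire fixed arc sits on $l$, is excluded here by the edge-pair condition itself, since it would force that arc into $P$.

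With this claim I would close the argument using the dichotomy of the first paragraph together with continuity. If the sequence is eventually constant at $K^*$ and $K^*$ is not exposed, the claim produces an admissible reflection strictly increasing $\mu$, so the rule could not have chosen to stand still, a contradiction. Otherwise there are infinitely many $\mu$-maximizing steps; since the knots converge to $K^*$ and the boundary plane $P$ of the claim remains a boundary plane for all large $i$ -- its separating supporting line persists exactly as in Lemma~\ref{lem:ExposedFiniteTime} -- reflection across this fixed plane is continuous in the knot and raises $\mu$ by at least $\epsilon/2$ at $K_i$, where $\epsilon>0$ is the strict increase it produces at $K^*$. A $\mu$-maximizing step does at least this well, so $\mu(p(K_i))$ would jump by at least $\epsilon/2$ infinitely often, contradicting that its increments tend to $0$; the interleaved spike-reducing steps cause no trouble, since the integer estimate of the first paragraph keeps them finite in this branch.

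I expect the main difficulty to lie in the subdimensional case, where $\mathrm{conv}(p(K^*))$ collapses onto a line. There the planar machinery of Lemmas~\ref{lem:ExposedPair} and~\ref{lem:CommonEdge} degenerates -- a supporting line no longer isolates exposed vertices in the same way, and the boundary planes become abundant -- so the clean strict $\mu$-increase of the central claim can fail and a reflection may merely shuffle a collinear configuration. This is exactly the situation the first tier of the rule is designed for: I would show that a non-exposed collinear limit still carries a reflection removing a reversal, so that the spike count would have to decrease without bound, which the integer estimate forbids. Making this collinear analysis precise, and verifying that the edge-pair and separating-line structure transfers continuously from $K^*$ to the tail of the sequence despite the deliberately fixed, non-generic orthogonal projection, is where the real care is required.
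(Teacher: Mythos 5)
Your argument is essentially the paper's proof of Lemma~\ref{lem:LimitExposed}: assume \(K^*\) is not exposed, produce an admissible edge-pair reflection that increases \(\mu\) by some \(\delta>0\), transfer it by continuity to the tail of the sequence, and contradict the fact that the increments of the bounded monotone quantity \(\mu(p(K_i))\) tend to zero --- and you in fact supply details the paper leaves implicit, namely the strict positivity of \(\delta\) and the integer bookkeeping between the spike-reducing and \(\mu\)-maximizing tiers of the selection rule. The subdimensional case you flag as the main difficulty is not actually a separate case: the second case of Lemma~\ref{lem:ExposedPair} still yields a supporting line meeting \(conv(p(K^*))\) only at the doubled endpoint, both arcs still carry vertices strictly off that line, and your strict \(\mu\)-increase claim goes through verbatim --- which is just as well, since the fallback you sketch via the spike count would be delicate to make rigorous, as that integer (unlike \(\mu\)) does not transfer continuously from \(K^*\) to nearby \(K_i\).
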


\begin{proof}
Seeking a contradiction, suppose \(K^*\) has a projection which is not exposed. This means there is a reflection move on \(K^*\). This involves increasing \(\mu(p(K^*))\) by an amount \(\delta\). Taking the convex hull and the sums of distances between vertex projections are both continuous operations which means the maximum change in \(\mu(p(K_i))\) must converge to a value greater than or equal to \(\delta\). Our choice of reflections, however, ensures that the maximum change in \(\mu(p(K_i))\) converges to zero. This is a contradiction and so the limit \(K^*\) must be exposed.
\end{proof}

We now know that if chosen properly, then the reflection moves will have an exposed limit. We next show that the limit is reached in finite time, first by proving it when the limit is full dimensional and then again when the limit is subdimensional.

\begin{lemma}
\label{lem:FullDimTerminates}
Suppose \(K_i\) is a sequence of knots with \(K_{i+1} = K_{i}\) or \(K_{i+1} = r_i(K_i)\) where \(r_i\) is a reflection of an arc connecting an edge pair across a boundary plane, with \(K_{i+1}\) chosen so that the number of vertexes \(v_j\) with turning angle at \(p(v_j) = \pi\) is reduced if possible, and otherwise chosen so that \(\mu(p(K_{i+1}))\) is maximized. If the projection of the limit polygon \(p(K^*)\) is full dimensional, then there exists an \(N\) so that for any \(n > N\), \(K_{n+1} = K_n\).
\end{lemma}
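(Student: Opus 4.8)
The plan is to combine the three facts already in hand—convergence of $K_i$ to the limit $K^*$, the exposedness of $K^*$ (Lemma~\ref{lem:LimitExposed}), and the finite-time exposure of limit-exposed vertices (Lemma~\ref{lem:ExposedFiniteTime})—with the full-dimensionality hypothesis, and then to reduce the statement to a finiteness claim about the reflections actually performed. Since $p(K^*)$ is full-dimensional, $conv(p(K^*))$ is a non-degenerate polygon; I would fix three of its non-collinear corner vertices. By Lemma~\ref{lem:ExposedFiniteTime} there is an $N_0$ beyond which every corner vertex of $conv(p(K^*))$, in particular these three, is exposed in every $K_n$, so the projected hulls carry a rigid, non-degenerate frame for all $n \geq N_0$. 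The desired conclusion—eventual constancy of the sequence—is exactly the assertion that only finitely many steps $n \geq N_0$ are genuine reflections (that is, satisfy $K_{n+1} \neq K_n$), so the goal becomes bounding the number of genuine moves.

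First I would dispose of the degenerate moves. Classify each genuine reflection $r_n$ according to whether the projection of its reflected arc lies entirely on the supporting line $l_n = p(P_n)$—a rearrangement within a single edge of $conv(p(K_n))$—or not, in which case the move strictly enlarges the projected hull and, because reflecting an arc off of a supporting line strictly separates its moved points from the rest, strictly increases $\mu(p(K_n))$. A within-edge rearrangement changes neither the hull nor $\mu$, so under the selection rule it can be chosen only when it strictly reduces the number of vertices whose projected turning angle equals $\pi$; since that count is a non-negative integer, at most finitely many within-edge moves occur.

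It remains to bound the hull-enlarging moves, which is the crux. The mechanism I would use is that each hull-enlarging move flips at least one vertex that was strictly interior to $conv(p(K_n))$ out onto the hull boundary, so the number of vertices of $K_n$ whose projection is a corner of $conv(p(K_n))$ strictly increases; as this number is bounded above by $n$, only finitely many such moves can occur. The delicate point, and the step I expect to be the main obstacle, is monotonicity: one must rule out that the advancing hull later re-absorbs a vertex that had become exposed, which would prevent the exposed count from being monotone. This is precisely where full-dimensionality is essential—the three pinned non-collinear corners hold the frame of the hull rigid for $n \geq N_0$, and the convergence $K_n \to K^*$ confines the hulls to a shrinking neighborhood of the fixed limit hull, so no genuine corner can be swallowed and the exposed count cannot oscillate. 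Combining the two bounds shows that only finitely many genuine moves occur after $N_0$, yielding an $N$ with $K_{n+1} = K_n$ for all $n > N$; the complementary subdimensional case is then treated separately in the next lemma.
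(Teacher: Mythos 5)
Your proposal has a genuine gap at the step you yourself flag as the crux. The load-bearing claim is that each hull-enlarging reflection strictly increases the number of vertices whose projection is a corner of \(conv(p(K_n))\), so that this integer serves as a monotone counter bounding the number of genuine moves. That claim is false: a reflection across a boundary plane can expose one new vertex on the far side of the supporting line while simultaneously absorbing a previously exposed vertex into the interior of the enlarged hull, leaving the count unchanged (and the swallowed vertex can even be one of the two vertices of the edge pair itself: if the reflected arc carries a vertex far across the line, the segment from its image to an exposed vertex of the stationary arc can pass over one endpoint of the edge pair, which then ceases to be a corner). So the exposed-vertex count is not monotone, and moves that leave it constant could a priori recur infinitely often. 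Your acknowledgment that one "must rule out re-absorption" is answered only by the assertion that three pinned non-collinear corners and convergence to \(K^*\) prevent it; that is not an argument, and the pinned corners do not protect the other corners from being swallowed. (A secondary issue: by the definition of an edge pair the boundary plane contains neither arc, so every genuine reflection moves some vertex projection off the supporting line and therefore strictly increases \(\mu\); your "within-edge rearrangement" class that "changes neither the hull nor \(\mu\)" is essentially empty, so the dichotomy is also misdrawn, though that part is repairable.)

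The paper's proof runs on an entirely different mechanism, which is the idea your proposal is missing. Writing \(K^*\) as a union of arcs \(A_l\) between consecutive limit-exposed vertices, full-dimensionality guarantees that the segment joining interior vertices of two distinct arcs meets the interior of \(conv(p(K^*))\); by convergence this already holds for \(p(K_n)\) with \(n\ge N\), so beyond \(N\) every edge pair is confined to a single arc \(A_l\) and every limit-exposed vertex sits on the stationary side of any admissible reflection. Then, for an interior vertex \(v_j\) of \(A_l\) with endpoints \(v_{l_0},v_{l_k}\), the distances \(d(p(v_{l_0}),p(v_j))\) and \(d(p(v_j),p(v_{l_k}))\) are non-decreasing, their sum is at least the fixed \(d(p(v_{l_0}),p(v_{l_k}))\) by the triangle inequality, and it converges to that value; hence equality holds already at time \(N\), pinning \(v_j\) to its limit position. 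With every vertex pinned, no genuine reflection exists for \(n>N\). In short, the paper does not count moves with a monotone invariant at all; it shows the configuration has already reached its limit after finitely many steps. You would need to replace your counting argument with something of this kind.
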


\begin{proof}
Lemma~\ref{lem:LimitExposed} tells us that the projection of the limit \(p(K^*)\) is an exposed polygon. The fact that \(K^*\) is an exposed polygon tells us that \(K^* = \bigcup_{l=0}^m A_l\), where \(A_l\) is an arc connecting adjacent exposed vertexes of \(K^*\). We also know that since \(K^*\) is full dimensional, for any pair of vertexes \(v_{l_1}, v_{l_2}\) on the interior of distinct arcs \(A_{l_1}, A_{l_2}\), the line segment connecting \(p(v_{l_1}), p(v_{l_2})\) intersects the interior of the convex hull of \(p(K^*)\). The continuity of taking the convex hull and taking convex combinations of vertexes tells us that there is a natural number \(N_{l_1,l_2}\) so that for every \(n \geq N_{l_1,l_2}\), \(K_n\) has the line connecting \(v_{l_1},v_{l_2}\) intersects the interior of the convex hull of \(p(K_n)\). Taking \(N\) to be the maximum over this finite number of bounds, we get that for every \(n \geq N\), any line connecting vertexes on distinct arcs \(A_{l_1}, A_{l_2}\) intersects the interior of the convex hull of \(p(K_n)\), and so every edge pair lies in one of the arcs \(\{A_l\}_{l=1}^m\).

The fact that every edge pair lies in one of the arcs \(\{A_l\}_{l=1}^m\) tells us that any reflection of arc connecting an edge pair must have either the reflected arc or the stationary arc containing all vertexes which are exposed in \(p(K^*)\). Without loss of generality, we can assume that the stationary arc is the one which contains all vertexes exposed in \(p(K^*)\). Thus, every vertex which is exposed in \(p(K^*)\) is fixed for all \(n > N\). For any vertex \(v_j\) on the interior of one of the arc \(A_l\), we have the triple of vertexes \(v_j, v_{l_0}, v_{l_k}\), where \(v_{l_0}, v_{l_k}\) are the endpoints of the arc \(A_l\). The distance \(d_1 = d(p(v_{l_0}),p(v_{l_k}))\) is fixed, the distances \(d_2 = d(p(v_{l_0}),p(v_j))\) and \(d_3 = d(p(v_j),p(v_{l_k}))\) are non-decreasing, and the limit satisfies \(d_1 = d_2 + d_3\) which is the minimum for \(d_2+d_3\), so for every \(n \geq N\), \(d_1 = d_2+d_3\) which uniquely determines the location of \(v_j\). Thus, for every \(n \geq N\), every vertex is fixed. Thus, \(K_{n+1} = K_n\).
\end{proof}

\begin{lemma}
\label{SubDimTerminates}
Suppose \(K_i\) is a sequence of knots with \(K_{i+1} = K_{i}\) or \(K_{i+1} = r_i(K_i)\) where \(r_i\) is a reflection of an arc connecting an edge pair across a boundary plane, with \(K_{i+1}\) chosen so that the number of vertexes \(v_j\) with turning angle at \(p(v_j) = \pi\) is reduced if possible, and otherwise chosen so that \(\mu(p(K_{i+1}))\) is maximized. If the projection of the limit polygon \(p(K^*)\) is subdimensional, then there exists an \(N\) so that for any \(n > N\), \(K_{n+1} = K_n\)
\end{lemma}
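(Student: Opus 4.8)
My plan is to mirror the structure of Lemma~\ref{lem:FullDimTerminates}, but to replace the role played there by the exposed vertices of the limit with the turn-around vertices forced by the subdimensional geometry, and then to close with the same collinear triangle-equality pinning. First I would fix coordinates adapted to the degeneracy. Since \(p(K^*)\) is subdimensional, \(\operatorname{conv}(p(K^*))\) is a segment lying on a line \(L\), which after an initial rotation of the whole knot I may take to be the \(x\)-axis of the projection plane. Because every reflection plane is vertical, no reflection changes a \(z\)-coordinate, so each vertex \(v_j\) carries a height \(z_j\) that is constant along the entire sequence, while in the limit \(y_j\to 0\); thus \(K^*\) is a planar polygon in the vertical \(x\)-\(z\) plane whose image under \(p\) is the segment \([P_{\min},P_{\max}]\subseteq L\). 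If \(p(K^*)\) collapses further to a single point, the same argument applies with \(P_{\min}=P_{\max}\) and is only easier.

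The key new observation I would exploit is that projected turning angles are monotone. In the projection each \(r_i\) is a reflection across a supporting line \(l_i\) of \(\operatorname{conv}(p(K_i))\); since \(p(K_i)\) lies on one side of \(l_i\), the planar version of the distance computation in Lemma~\ref{lem:Thickness} shows no projected pairwise distance decreases, and then the pointwise angle argument from the proof of Lemma~\ref{lem:distance} shows that the projected interior angle at every vertex is non-decreasing, i.e. every projected turning angle is non-increasing in \(i\). A \emph{cusp} (a vertex whose projected turning angle equals the maximal value \(\pi\)) therefore can never be created: the set of cusp vertices is non-increasing, hence constant for all \(i\ge N_1\) for some \(N_1\). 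For \(i\ge N_1\) the priority to reduce cusps is never exercised, so every move is a \(\mu\)-maximizing reflection; moreover any vertex that is a cusp in the limit has projected turning angle non-increasing and converging to the maximal value \(\pi\), hence identically \(\pi\), so its two incident edges project to a common direction and it is a genuine turn-around of the projected path.

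Next I would extract the out-and-back structure and pin the vertices. By Lemma~\ref{lem:LimitExposed} the limit \(p(K^*)\) is exposed, so the preimages of \(P_{\min}\) and \(P_{\max}\) are connected; combined with the fact that between consecutive cusps the projected path is monotone on \(L\), I would argue that the cusp-minimization priority forces the limit to consist of exactly two monotone arcs joining \(P_{\min}\) to \(P_{\max}\). As in Lemma~\ref{lem:FullDimTerminates}, after a finite time the two extreme vertices are fixed (one may take the stationary arc to contain both), and then each intermediate vertex \(v_j\) lies collinearly between them in the limit: writing \(d_1=d(P_{\min},v_j)+d(v_j,P_{\max})\) in projection, the value \(d_1\) is non-decreasing, bounded below by the now-fixed distance \(d(P_{\min},P_{\max})\), and converges to it, so equality holds for all large \(i\). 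This pins \(p(v_j)\) on \(L\), and since \(z_j\) is fixed it pins \(v_j\) in \(\R^3\). Hence \(K_{i+1}=K_i\) for all \(i\) past this time, as required.

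The main obstacle is the third step: forcing the limit to be a single out-and-back and fixing the extreme vertices. In Lemma~\ref{lem:FullDimTerminates} this came essentially for free, because any line joining distinct arcs eventually meets the interior of the convex hull; here that interior is empty, so the argument is unavailable. The real work is to show that cusp-minimization, together with the non-increasing projected turning angles, actually eliminates every intermediate cusp. The difficulty is that an intermediate turn-around does not reach the extreme \(x\)-values and so need not lie on the boundary of the vanishingly thin hulls \(\operatorname{conv}(p(K_i))\), which makes it delicate to exhibit the boundary-plane reflection that would reduce it. Controlling these near-degenerate hulls, and treating the fully collapsed case in which \(p(K^*)\) is a point, is where I expect the argument to require the most care.
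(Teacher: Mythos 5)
There is a genuine gap, and you have in fact located it yourself: your third step is both incomplete and aimed at the wrong target. Your plan is to pin every vertex by the triangle equality \(d(P_{\min},v_j)+d(v_j,P_{\max})=d(P_{\min},P_{\max})\), which requires the limit to be a single out-and-back (two monotone arcs) with no intermediate turn-arounds. But nothing in the hypotheses forces that: an intermediate cusp sits at an interior point of the degenerate hull, the only supporting line through its projection is the line of the segment itself (which contains its two edges), so no admissible edge-pair reflection across a boundary plane can ever remove it. The cusp-minimization priority is therefore powerless against intermediate cusps, the limit may well retain them, and the ``out-and-back'' structure you need cannot be established. Since you explicitly defer exactly this point as ``where the argument requires the most care,'' the proof as proposed does not close.

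The paper's proof avoids this entirely by never trying to pin individual vertices; instead it shows that past some time \emph{no admissible move exists at all}. Concretely: (i) only finitely many cusp-reducing reflections can occur, so past some \(M\) none is available; (ii) by Lemma~\ref{lem:LimitExposed} the limit is exposed and, being subdimensional, its exposed vertices are cusps, and by Lemma~\ref{lem:ExposedFiniteTime} they are exposed in \(p(K_n)\) for all large \(n\); (iii) if \(p(K_n)\) were full dimensional, such an exposed cusp would admit a supporting line missing its two edges, hence an edge-pair reflection reducing the cusp count --- contradicting (i) --- so \(p(K_n)\) is subdimensional for large \(n\); (iv) a subdimensional projection has exactly its two endpoints exposed, any further edge-pair reflection would change the exposed set, and the exposed set is already locked to that of \(K^*\), so \(K_{n+1}=K_n\). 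Intermediate cusps are simply left alone. On the positive side, your observation that projected pairwise distances are non-decreasing, hence projected turning angles are non-increasing, hence cusps can be destroyed but never created, is correct and valuable: it is precisely the justification the paper omits for its assertion in step (i) that only finitely many cusp-reducing moves can occur. If you graft that observation onto the paper's ``no move remains available'' strategy in place of your vertex-pinning step, you obtain a complete argument.
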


\begin{proof}
Lemma~\ref{lem:LimitExposed} tells us that the projection of the limit \(p(K^*)\) is an exposed polygon. There can only be finitely many \(r_i\) which reduce the number of turning angle \(\pi\) vertexes so there is a number \(M\) so that for every \(n > M\), no such reflection is possible. The fact that \(p(K^*)\) is subdimensional means that each exposed vertex  \(p(v_i)\) has turning angle \(\pi\), so both edges coming out from \(p(v_i)\) go in the same direction. By Lemma~\ref{lem:ExposedFiniteTime}, there is an \(N\) with \(p(v_i)\) exposed in \(K_n\) for every \(n \geq N\). If \(n > max(N,M)\) and \(p(K_n)\) is not subdimensional, then there are two distinct edges of \(conv(p(K_n))\) coming from \(p(v_i)\), so at least one supporting line \(l\) does not contain the edges coming out of \(p(v_i)\). This means that there is an edge pair made of \(v_i\) and another vertex \(v_j\) with \(p(v_j)\) on \(l\). Reflecting an arc connecting \(v_i\) to \(v_j\) reduces the number of vertexes with turning angle \(\pi\). This contradicts the choice of \(M\) so we know that for all \(n > max(M,N)\), \(p(K_n)\) is subdimensional. Since a subdimensional polygon has exactly its endpoints as exposed vertexes, every reflection move between subdimensional polygons changes which two vertexes are exposed, and for every \(n > N\) the two exposed vertexes of \(p(K^*)\) are exposed vertexes of \(p(K_n)\), no reflection can be applied to \(K_n\). Thus, for every \(n > max(N,M)\), \(K_{n+1} = K_n\).
\end{proof}

\begin{theorem}
\label{thm:ConvexifyPlanar}
For any knot \(K\), there is a finite number of reflections \(r_i\) of arcs connecting edge pairs across boundary planes, generating a sequence \(K = K_0,\ldots,K_n\), with \(r_i(K_i) = K_{i+1}\) and \(p(K_n)\) an exposed polygon.
\end{theorem}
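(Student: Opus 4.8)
The plan is to assemble the preceding lemmas into a single inductive construction and then close it off with the termination results. Starting from $K_0 = K$, I would build the sequence $\{K_i\}$ by the deterministic rule used throughout this subsection: if $p(K_i)$ is already exposed, set $K_{i+1} = K_i$; otherwise Lemma~\ref{lem:CommonEdge} guarantees an edge pair, hence at least one admissible reflection of an arc across a boundary plane, and I choose $K_{i+1}$ among the identity and all such reflections so as first to reduce the number of vertexes with turning angle $\pi$ if possible, and otherwise to maximize $\mu(p(K_{i+1}))$. By construction this sequence satisfies the hypotheses shared by all the lemmas above, so each of those lemmas applies verbatim.

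Next I would extract the limit and its structure. The convergence lemma proved above shows that $\{K_i\}$ converges to a limit polygon $K^*$, and Lemma~\ref{lem:LimitExposed} shows that, under exactly this choice rule, $p(K^*)$ is exposed. At this stage the only gap is that ``converges to an exposed limit'' is formally weaker than ``reaches an exposed polygon after finitely many moves,'' so the termination results are needed to upgrade the convergence to finiteness.

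To finish, I would split on the dimension of $p(K^*)$. If $p(K^*)$ is full dimensional, Lemma~\ref{lem:FullDimTerminates} produces an $N$ with $K_{n+1} = K_n$ for all $n > N$; if $p(K^*)$ is subdimensional, Lemma~\ref{SubDimTerminates} yields the same conclusion. Either way the sequence is eventually constant, and since it also converges to $K^*$, that eventual constant value must equal $K^*$ itself, whence $K_{N+1} = K^*$ has exposed projection. Discarding the finitely many steps of $K_0,\ldots,K_{N+1}$ at which the identity was chosen leaves a finite list of genuine reflections $r_i$ of arcs connecting edge pairs across boundary planes whose composition carries $K$ to a knot with exposed projection, which is exactly the claim.

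The theorem is mostly bookkeeping, since the substantive work lives in the supporting lemmas, so I do not expect a single hard computational step. The one point that genuinely requires care is the passage from ``eventually constant'' to ``the constant value is exposed'': this is where the termination lemmas (via the full/sub-dimensional dichotomy) must be combined with the identification of the eventual value with the limit $K^*$ from Lemma~\ref{lem:LimitExposed}, rather than either ingredient used on its own. A secondary, easily dispatched subtlety is checking that the choice rule is well posed, i.e.\ that a $\mu$-maximizing move exists among the available reflections; since the edge pairs, and hence the admissible boundary planes, are finite in number, the maximum is attained.
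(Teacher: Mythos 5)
Your proposal is correct and follows essentially the same route as the paper's own (much terser) proof: use Lemma~\ref{lem:CommonEdge} to supply a reflection whenever the projection is not yet exposed, the convergence lemma and Lemma~\ref{lem:LimitExposed} to identify an exposed limit $K^*$, and Lemmas~\ref{lem:FullDimTerminates} and~\ref{SubDimTerminates} to conclude the sequence is eventually constant at $K^*$, so only finitely many reflections occur. Your added care about identifying the eventual constant value with $K^*$ is a genuine improvement in rigor (the one small slip is the aside that the admissible boundary planes are ``finite in number''---for an edge pair they can form a positive-length interval, but the $\mu$-maximizer still exists by compactness and continuity).
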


\begin{proof}
By Lemma~\ref{lem:CommonEdge} there is a reflection as long as the result is not yet exposed. By lemmas~\ref{lem:FullDimTerminates} and ~\ref{SubDimTerminates} we know that if those reflections are chosen correctly, only finitely many are required to reach the limit polygon, which by Lemma~\ref{lem:LimitExposed} is an exposed polygon.
\end{proof}

\subsubsection{Moving From Exposed to Convex}

To reach a convex polygon, we need to make the number of unnecessary intersections of edge projections go to zero. We measure this distance from being an embedding by taking the \emph{incidence} of the knot to be 
\begin{align*} I(K) = |\{(e_i,e_j) \text{ such that } p(e_i)\cap p(e_j) \text{ non-trivial }\}|\end{align*}
Using the the previous section we can get the following theorem about polygonal knots with thickness.

\begin{theorem}
\label{thm:Convexify}
Any knot \(K\) can be transformed into a knot \(K'\) whose orthogonal projection into the \(x-y\) plane is an exposed polygon using a finite number of height preserving, thickness non-decreasing reflections \(r_i\), with \(I(K') \leq I(K)\) and each \(r_i\) in the closure of the interior of moves which do not decrease thickness.
\end{theorem}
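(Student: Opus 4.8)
The plan is to reuse the reflection sequence produced by Theorem~\ref{thm:ConvexifyPlanar}. That theorem already yields a finite chain $K = K_0, \dots, K_n$ with each $K_{i+1} = r_i(K_i)$, each $r_i$ the reflection of an arc joining an edge pair $v_a, v_b$ across a boundary plane $P_i$, and $p(K_n)$ exposed. I would set $K' = K_n$ and verify the four additional assertions one reflection at a time; composing the finitely many steps then gives the theorem. Throughout a single step, write $l_i = p(P_i)$ for the supporting line of $conv(p(K_i))$ that $P_i$ lies over, $A$ for the reflected arc, and $\sigma$ for the planar reflection across $l_i$, so that $p \circ r_i = \sigma \circ p$ on $A$.

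\emph{Height and thickness.} Since every boundary plane $P_i$ is vertical, reflection across it fixes all $z$-coordinates, so each $r_i$ is height preserving. Since $P_i$ is disjoint from the interior of $conv(K_i)$ by the definition of a boundary plane, Lemma~\ref{lem:Thickness} shows $r_i$ does not decrease thickness. These two properties therefore require no work beyond recording that boundary planes are vertical supporting planes of the spatial hull.

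\emph{Incidence does not increase.} Here the key observation is that $p(K_i)$ lies in one closed half-plane $H$ bounded by $l_i$, so $\sigma$ carries $p(A)$ into the opposite closed half-plane $H'$ with $H \cap H' = l_i$. For a pair of edges I would split into three cases. If both edges lie in the stationary arc, their projections are untouched; if both lie in $A$, the planar isometry $\sigma$ preserves their intersection exactly; and for a crossing pair, with stationary edge $e$ and reflected edge $f$, I would use that $p(e) \subseteq H$ and $\sigma(p(f)) \subseteq H'$ force any intersection onto $l_i$, where $\sigma$ is the identity. This gives the containment $p(e) \cap \sigma(p(f)) = p(e) \cap p(f) \cap l_i \subseteq p(e) \cap p(f)$, so a crossing pair that meets nontrivially after the reflection already met nontrivially before it. Summing the three cases yields $I(K_{i+1}) \le I(K_i)$, and hence $I(K') \le I(K)$. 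I would emphasize that this containment is purely set-theoretic and so, pleasingly, survives all the degenerate projections (coincident vertices, edges lying in $l_i$) that the construction is forced to allow.

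\emph{Closure of the interior of non-decreasing moves.} For the last property I would work inside the circle of planes through the spatial line $\ell_i$ determined by $v_a, v_b$, which is the component of the move space containing $r_i$. By Lemma~\ref{lem:Thickness}, every plane in this circle that misses the interior of $conv(K_i)$ gives a non-decreasing move, and these supporting planes form a closed arc $T$ containing $P_i$. Because $v_a v_b$ is a genuine edge of $conv(K_i)$, the exterior dihedral angle there is strictly positive, so $T$ is a nondegenerate arc and $P_i \in T = \overline{\operatorname{int} T}$; since $T$ lies inside the set of non-decreasing moves, $P_i$ lies in the closure of its interior, as required. I expect the main obstacle of the whole proof to be exactly this nondegeneracy claim in the configurations where $conv(K_i)$ collapses in dimension — for instance when $p(K_i)$ has become subdimensional and $K_i$ lies in a vertical plane — since there the edge $v_a v_b$ and its supporting arc must be analyzed directly rather than by appealing to a full-dimensional hull, and it is here, rather than in the incidence count, that the absence of a general-position hypothesis bites hardest.
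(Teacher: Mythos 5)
Your proposal is correct and follows essentially the same route as the paper: reuse the reflection sequence from Theorem~\ref{thm:ConvexifyPlanar}, note that vertical boundary planes preserve height and (via Lemma~\ref{lem:Thickness}) thickness, run the same three-case analysis forcing any new projected intersection onto the line $p(P_i)$ where the reflection is the identity, and use the positive-length arc of supporting planes through the shared hull edge to place $r_i$ in the closure of the interior of non-decreasing moves. The only difference is that you spell out the half-plane containment and the degenerate-hull case more explicitly than the paper does (and in the subdimensional case the hull has empty interior, so every plane in the circle works and the nondegeneracy worry disappears).
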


\begin{proof}
We first take the standard orthogonal projection of \(K\) into the \(x-y\) plane. This projection gives a polygonal loop, \(p(K)\), with possible intersections. We can apply Theorem~\ref{thm:ConvexifyPlanar} to find a sequence of finitely many reflections \(r_i\) of arcs connecting edge pairs across boundary planes \(P_i\) which take \(K\) to \(K'\) with \(p(K')\) exposed.

Each of these reflection moves is across a plane parallel to the \(z\)-axis, and so the \(z\) coordinate is unchanged, making the process height invariant. We also are reflecting at each step across a plane which does not intersect the convex hull of the current knot, which means that by Lemma~\ref{lem:Thickness} the thickness is not decreased by any reflection. The fact that the reflection is of an arc connecting an edge pair means that \(P_i\) lies in a positive length interval of planes which contain the same edge pair but do not intersect the interior of the convex hull. Thus, \(r_i\) is in the closure of the interior of moves which do not decrease thickness. Finally, if we have two edges whose projections intersect non-trivially after a reflection move, then either both were fixed, both moved, or exactly one moved. If both moved or both were fixed, then the non-trivial intersection was present before the reflection. If one moved and the other didn't, then they each are placed on opposite sides of \(p(P_i)\). Thus, their intersection lies on \(p(P_i)\) and was fixed by \(r_i\) meaning it was present before the reflection. Therefore there can be no newly created incident edge pairs, and so \(I(K') \leq I(K)\). Thus, all the properties in the theorem are shown.
\end{proof}

Our next step is to find reflections which decrease the incidence of the knot projection, without increasing thickness or changing the set of minimal height vertexes. We know that we have injectivity at the vertexes of the convex hull, so we turn our attention to the edges. Whenever injectivity fails inside an edge, we will push a part of the preimage of that edge out slightly to remove some of the incidences and continue the process.

\begin{lemma}
\label{lem:Pushout}
If \(K\) is a knot with \(p(K)\) an exposed polygon which is not convex, there exists a reflection move or affine transformation \(r\) with \(I(r(K)) < I(K)\), \(r(K)\) having no fewer minimal height vertexes than \(K\), and \(r\) in the interior of moves which do not decrease thickness.
\end{lemma}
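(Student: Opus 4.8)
The plan is to localize the failure of convexity to a single edge of the convex hull of the projection and to resolve it with one unfolding move. First I would argue that, since $p(K)$ is exposed but not convex, there is an edge $E$ of $conv(p(K))$ along which $p(K)$ fails to be injective. Writing $a,b$ for the two vertexes of $K$ whose projections are the endpoints of $E$, exposedness (connected preimages of hull vertexes) forces the sub-arc of $K$ running from $a$ to $b$ to project entirely into $E$; in particular this whole sub-arc has constant value of the in-plane coordinate transverse to $E$, so it lies in a single vertical plane and its projection is a collinear path travelling back and forth along $E$. Hence there is at least one vertex $v_k$ at which the projection reverses direction, and the resulting coincidence of two projected edges is exactly what contributes to $I(K)$. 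I would single out an \emph{outermost} such fold, i.e.\ a reversal whose excursion along $E$ is not nested inside another, so that the strand to be moved can be swung into the region exterior to $conv(K)$, which is unoccupied by the rest of $K$.

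The move itself is an unfolding of this fold. In the generic case, where $p(K)$ is full-dimensional, I would take $r$ to be a reflection of the returning sub-arc across a plane through $v_k$ and a second vertex of the fold, chosen (among the circle of planes through that pair) so that the strand is swung outward, onto the exterior side of $conv(K)$ and off the line of $E$, thereby separating the previously overlapping projected edges and removing at least the incidence at $v_k$. When $p(K)$ is subdimensional, so that the convex hull degenerates to a segment, there is no planar hull to reflect against; here I would instead use the affine-transformation option, applying a small rigid rotation of the entire knot to tilt the degenerate projection into a full-dimensional one whose number of double points is strictly smaller, which is possible because the flat knot meets a generic projection direction in only finitely many coincidences.

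Finally I would verify the three required properties. For $I(r(K)) < I(K)$ I would argue, exactly as in the proof of Theorem~\ref{thm:Convexify}, that a reflection can only create a new projected incidence along the projected reflection locus, so that resolving the chosen fold strictly decreases the count; in the affine case strictness follows from choosing the rotation to eliminate at least one coincidence while introducing none. For thickness I would invoke Lemma~\ref{lem:distance}: since the fold is swung into exterior empty space, every pairwise distance is non-decreasing and the interior angle at $v_k$ opens up, so that $MinRad$ and the long-range radius (via Lemma~\ref{lem:TCnotDCSD}) do not decrease, while an affine transformation is an isometry and preserves thickness exactly. Because the governing inequalities (the outward distance gains and the opened angle) are strict, a whole neighborhood of nearby reflections inherits the non-decrease property, placing $r$ in the interior of thickness-non-decreasing moves. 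The main obstacle I expect is precisely this thickness bookkeeping combined with the minimal-height constraint: any reflection that moves the strand off the line of $E$ must be non-vertical and therefore perturbs the heights of the moved vertexes, so I must choose the fold and the reflection plane so that the swung strand neither approaches another strand nor drops any vertex below the current minimum height (which would reduce the number of minimal-height vertexes). Making the outward motion compatible with this height control is the delicate part of the argument.
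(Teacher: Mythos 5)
Your overall skeleton matches the paper's: isolate a hull edge of the projection over which $p$ fails to be injective, unfold the doubled-back strand by a reflection, and treat the subdimensional case separately by an affine map. But there are two genuine gaps. First, in the subdimensional case your ``small generic rotation'' does not do the job: the lemma requires that the number of minimal-height vertexes not decrease, and a small generic tilt of a knot lying in a vertical plane will typically leave only one vertex at the new minimum, which can be fewer than before; it is also not clear that it strictly decreases $I$. The paper instead rotates the vertical plane containing $K$ all the way to horizontal, so that $p(r(K))$ is an embedding, $I(r(K))=0$, and \emph{every} vertex attains the minimal height, which settles all three conclusions at once.

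Second, and more seriously, your thickness argument in the full-dimensional case is not valid as stated. Reflecting a subarc across a plane through two of its vertexes does not increase all pairwise distances merely because the strand is ``swung into exterior empty space''; the hypothesis you actually need is that the reflection plane misses the interior of $\operatorname{conv}(K)$, which is the content of Lemma~\ref{lem:Thickness}, and a plane ``through $v_k$ and a second vertex of the fold, chosen to swing the strand outward'' need not satisfy it. The paper supplies exactly the missing quantitative choice: it takes $l$ to be an edge of the \emph{lower} boundary of the convex hull of the doubled-back arc $A$ inside the vertical strip $p^{-1}(e)$ (so $l$ lies in a supporting plane of $\operatorname{conv}(K)$), sets up cylindrical coordinates about $l$ in which the entire knot occupies angles in $(0,\pi]$, and reflects across the plane through $l$ at an angle $\epsilon$ with $0<\epsilon<\theta_{\min}$, the minimal angular clearance of the knot about $l$. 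That single choice simultaneously forces the plane to meet $\operatorname{conv}(K)$ only in $l$ (giving the thickness non-decrease and, since nearby planes share the property, membership in the \emph{interior} of thickness-non-decreasing moves), makes the image half-plane non-vertical so that the moved strand projects injectively (giving $I(r(K))<I(K)$), and, for $\epsilon$ small, controls the heights so the set of minimal-height vertexes is unchanged. You correctly identify this height-and-thickness bookkeeping as the delicate part, but that is precisely where the content of the lemma lies, and your proposal does not supply it.
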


\begin{proof}
We first consider the special case where \(p(K)\) is subdimensional. In that case, \(K\) lies in a vertical plane, so we can apply an affine transformation \(r\) moving this plane to horizontal. This means \(p(r(K))\) is an embedding, so \(I(r(K))\) is zero, \(r(K)\) has the same thickness as \(K\), and \(r(K)\) has the maximum number of minimal height vertexes.

We next move to the case where \(p(K)\) is full dimensional. Note that if \(p(K)\) is an exposed polygon which is not convex, then there is an edge \(e\) of the convex hull of \(p(K)\) for which \(p|_{p^{-1}(e)}\) is not an injection. The set \(p^{-1}(e)\) is an infinite strip, with \(K\cap p^{-1}(e)\) going from one side of the strip to the other, as in figure~\ref{fig:strip}. We will denote this arc \(A\) for simplicity. The fact that \(A\) moves from one side of the strip to the other tells us that the convex hull of \(A\) separates the strip into a top and bottom portion. In particular, the doubling back means that \(A\) doesn't live inside the lower boundary of its convex hull. Thus, there is a line \(l\) defining the bottom portion of the convex hull of \(A\) and a subarc \(a\) with both ends on \(l\) but whose interior misses \(l\). We can assert this by choosing a point \(\alpha\) which lies directly above another, and proceeding forward and back until we reach the lower boundary of the convex hull of \(A\) at the points \(\alpha_+\) and \(\alpha_-\). The knot being injective means that the arc connecting \(\alpha_-\) to \(\alpha_+\) through \(\alpha\) separates the convex hull, and so anything on the line segment connecting \(\alpha_-\) to \(\alpha_+\) would be separated from the endpoints of \(A\). We will reflect across a plane containing \(l\).

\begin{figure}
\includegraphics{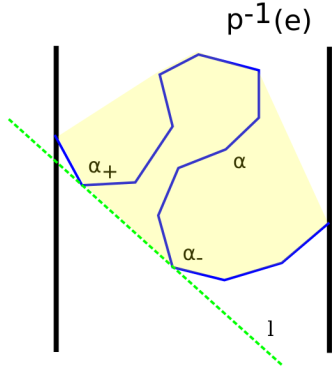}
\caption{An example of \(p^{-1}(e)\) with the convex hull of \(A\) highlighted}
\label{fig:strip}
\end{figure}

If we choose the plane of reflection to be perfectly vertical, then we have accomplished nothing, but if the plane of reflection is too shallow, we may create problems for thickness or minimal height vertexes. We will consider a cylindrical coordinate system, where the center of the cylinder is the line \(l\) about which we are rotating. We orient this coordinate system so the angle \(0\) corresponds to the half plane directly below \(l\) and the angle \(\pi\) to the half plane above \(l\). The fact that \(l\) lies on the boundary of the convex hull means we can assert that the entire knot has angles in \([0,\pi]\), and the fact that \(l\) is a lower boundary of the convex hull of \(A\) means that all angles are greater than \(0\). The knot \(K\) is compact, which means it attains its minimal angle \(\theta_{min}\). This means that if we choose a plane through \(l\) at angle \(0 < \epsilon < \theta_{min}\), this plane intersects the convex hull of \(K\) only in the line \(l\). In particular, Lemma~\ref{lem:Thickness} tells us that reflecting across this plane is a reflection move \(r\) with \(r(K)\) having no lower thickness than \(K\), as is any nearby reflection \(\tilde{r}\), so \(r\) is in the interior of moves which do not decrease thickness. The height of each vertex after the reflection is a continuous function of the angle \(\epsilon\), and so for a sufficiently small \(\epsilon\), any vertex which was above the minimal height will stay above the minimal height, and none at the minimal height are moved, so the set of minimal height vertexes is unchanged. Finally, we seek to show that \(I(r(K)) < I(K)\).

Any pair of edges which are both unmoved will have their incidences unchanged. Among those that are moved, we have at least some non-trivial intersections, by our choice of \(\alpha\). Further, the moved portion \(a\) is moved to a half plane \(h\) which is not vertical, and therefore \(p|_h\) is injective. Thus, there are no incidences among pairs of edges where at least one is moved. Thus, \(I(r(K)) < I(K)\) and so the result holds.
\end{proof}

This allows us to state the next main theorem.

\begin{theorem}
For any knot \(K\), there is a finite sequence of knots \(\{K_i\}_{i=0}^n\) and reflection moves \(r_i\) satisfying the following.
\begin{itemize}
\item \(K_0 = K\)
\item \(p(K_n)\) is convex
\item The set of minimal height vertexes is nondecreasing as \(i\) increases.
\item Each reflection move \(r_i\) is in the closure of the interior of moves which do not decrease thickness.
\end{itemize}
\end{theorem}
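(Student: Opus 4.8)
The plan is to combine Theorem~\ref{thm:Convexify} and Lemma~\ref{lem:Pushout} into an iterative scheme that drives the incidence $I$ down to zero while repeatedly restoring the exposed property, using $I$ as a monovariant. First I would apply Theorem~\ref{thm:Convexify} to $K = K_0$ to reach a knot whose projection is exposed; this stage is height preserving, keeps each move in the closure of the interior of thickness non-decreasing moves, and leaves $I$ non-increasing. If the resulting projection is already convex the construction terminates. Otherwise the projection is exposed but not convex, so Lemma~\ref{lem:Pushout} supplies a single move $r$ with $I(r(K)) < I(K)$, with no fewer minimal height vertexes, and with $r$ in the interior of thickness non-decreasing moves. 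Applying $r$ may destroy the exposed property, so I would then re-apply Theorem~\ref{thm:Convexify} and repeat the cycle.

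The central observation that makes this terminate is that an exposed projection with $I = 0$ is automatically convex: being exposed forces the image into the boundary of the convex hull with connected preimages of the hull vertices, and $I = 0$ rules out any doubling back within a hull edge, so the map is injective and traverses the hull boundary exactly once. Consequently, whenever the projection is exposed but not convex we must have $I \geq 1$, so each application of Lemma~\ref{lem:Pushout} strictly decreases the non-negative integer $I$ (to a strictly smaller value in the full dimensional case, and to $0$ in the subdimensional case). Since the intervening re-exposure by Theorem~\ref{thm:Convexify} never increases $I$, the value of $I$ strictly decreases over each full cycle and hence reaches $0$ after finitely many cycles; at that point the projection is exposed with $I = 0$, i.e.\ convex, and the process halts, producing the finite sequence $\{K_i\}_{i=0}^{n}$.

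It remains to verify the two side conditions throughout. Each reflection produced by Theorem~\ref{thm:Convexify} lies in the closure of the interior of thickness non-decreasing moves, and each move from Lemma~\ref{lem:Pushout} lies in the interior, which is contained in that closure, so the thickness condition holds at every step. For the minimal height vertexes, Theorem~\ref{thm:Convexify} is height preserving and so leaves the minimal height set fixed, the full dimensional case of Lemma~\ref{lem:Pushout} leaves the set unchanged, and the subdimensional case rotates the knot into a horizontal plane, making every vertex minimal; in all cases the set only grows, so it is nondecreasing in $i$.

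The main obstacle I expect is bookkeeping at the interface between the two ingredients rather than a deep new idea: one must confirm that re-exposing via Theorem~\ref{thm:Convexify} genuinely cannot raise $I$, so that the monovariant is never reset, and one must reconcile the statement's insistence on reflection moves with the fact that the subdimensional branch of Lemma~\ref{lem:Pushout} can return an affine transformation. The latter either has to be admitted among the allowed moves or realized as a short sequence of reflections; since after that rotation the knot is horizontal and already an embedding, a single further application of Theorem~\ref{thm:Convexify} finishes, so the subdimensional branch contributes only a bounded amount of extra work and does not threaten termination.
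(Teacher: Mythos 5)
Your proposal is correct and follows essentially the same route as the paper, which also proceeds by induction on the incidence \(I(K)\), alternating Theorem~\ref{thm:Convexify} (to reach an exposed projection without increasing \(I\)) with Lemma~\ref{lem:Pushout} (to strictly decrease \(I\)). Your write-up is in fact more careful than the paper's terse argument, since you explicitly justify that an exposed projection with \(I=0\) is convex and flag the mismatch between the theorem's ``reflection moves'' and the affine transformation in the subdimensional branch of Lemma~\ref{lem:Pushout}, both of which the paper leaves implicit.
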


\begin{proof}
We induct on \(I(K)\). Regardless of the value of \(I(K)\), we can apply Theorem~\ref{thm:Convexify} and get a finite sequence which ends in an exposed polygon. If this polygon is convex, then we are done. Otherwise we apply Lemma~\ref{lem:Pushout}, which reduces the number of incidences, and so the inductive hypotheses completes the result.
\end{proof}

\subsection{Flattening the Knot}

In this section our goal is to produce a knot which is convex and planar using thickness non-decreasing moves. We will do so inductively using the number of vertexes at the minimum height. This will be done using two theorems.

\begin{theorem}
\label{thm:MoreMinVerticies}
If a knot has a convex orthogonal projection into the \(x-y\) plane, but is not planar, then there is a pair of reflection moves or a rotation that will increase the number of minimum height vertexes by at least one, with a neighborhood which does not decreasing the thickness.
\end{theorem}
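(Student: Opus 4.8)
The plan is to induct downward on the vertical spread by the number of vertices at the minimum height. Write \(z_{\min}\) for the minimum of the \(z\)-coordinates over the vertices, \(H\) for the horizontal plane \(\{z = z_{\min}\}\), and \(S\) for the set of vertices lying on \(H\). Since the knot is not planar, at least one vertex lies strictly above \(H\); since \(p(K)\) is convex, the vertices occur in convex position and in cyclic order around \(\partial\, conv(p(K))\). I would split on \(|S|\), using the single \emph{rotation} of the statement when \(|S|=1\) and the \emph{pair of reflection moves} when \(|S|\ge 2\).

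First I would dispose of \(|S|=1\). Let \(v_a\) be the unique lowest vertex and rotate the whole knot rigidly about a horizontal axis \(\ell_0\) through \(v_a\), tilting toward some other vertex. Because the knot lies weakly above \(H\) and meets it only at \(v_a\in\ell_0\), I would track each vertex's height as \(z_{\min}+h\cos\theta-d\sin\theta\), where \(h\ge 0\) is its vertical offset and \(d\) its signed horizontal offset from the vertical plane through \(\ell_0\). The first tilt angle at which a vertex returns to height \(z_{\min}\) is \(\theta^{\ast}=\min_{d>0}\arctan(h/d)\), and for \(\theta\in[0,\theta^{\ast}]\) no vertex dips below \(H\); at \(\theta^{\ast}\) a second vertex joins \(v_a\) on \(H\), so \(|S|\) increases. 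As a global isometry, and being surrounded by other isometries, this rotation preserves thickness exactly together with a full neighborhood, so the thickness condition is immediate.

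The substantive case is \(|S|\ge 2\). Traversing the knot I would pick min-height vertices \(v_a,v_b\) that are \emph{consecutive} among \(S\) and whose connecting arc \(A\) has an interior vertex strictly above \(H\); such a pair exists, for otherwise every vertex would lie on \(H\) and \(K\) would be planar. The line \(\ell=\overline{v_av_b}\) is horizontal and lies in \(H\), and since \(p|_{A}\) traces one boundary chain of the convex polygon, \(A\) lies on one side of the vertical plane \(V\supset\ell\) while the complementary arc \(A^{c}\) (which contains every remaining vertex of \(S\)) lies on the other. In cylindrical coordinates about \(\ell\) with angle \(\phi\) measured from \(H\), the interior of \(A\) occupies \(\phi\in(0,\pi/2)\) and the interior of \(A^{c}\) occupies \(\phi\in(\pi/2,\pi]\), with every vertex of \(S\) in \(A^{c}\) at angle exactly \(\pi\). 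The move is to rotate \(A\) about \(\ell\) toward \(H\) — realized as two reflections across planes through \(\ell\) whose dihedral angle is half the rotation — stopping at the angle \(\delta>0\) at which the smallest-angle interior vertex of \(A\) first reaches \(\phi=0\), i.e.\ height \(z_{\min}\). Since every vertex of \(A\) has angle at least \(\delta\), none passes below \(H\); the vertices of \(S\) are all fixed (those in \(A^{c}\) are stationary and \(v_a,v_b\) lie on \(\ell\)); and one new vertex lands on \(H\), so \(|S|\) increases.

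I expect the main obstacle to be showing this fold does not decrease thickness, which I would reduce to Lemma~\ref{lem:distance} by checking that no pairwise distance decreases. Distances within \(A\) (rigidly rotated), within \(A^{c}\) (fixed), and to the axis vertices \(v_a,v_b\) are all preserved, so the only pairs to control are \(p\in A\), \(q\in A^{c}\), whose squared distance has the form \((\cdot)-2r_pr_q\cos(\phi_q-\phi_p)\). As the fold decreases \(\phi_p\) with \(\phi_q\) fixed and \(\phi_q>\pi/2>\phi_p\), the angular difference increases but, crucially, never exceeds \(\pi\) on \([0,\delta]\) because \(\phi_p\ge 0\) and \(\phi_q\le\pi\) there; hence \(\cos(\phi_q-\phi_p)\) is non-increasing and every such distance is non-decreasing, so by Lemma~\ref{lem:distance} the thickness does not decrease. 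The verification that the angular difference stays bounded by \(\pi\) — equivalently, that \(A\) and \(A^{c}\) remain cleanly separated by \(V\) as \(A\) folds down — is the one point I expect to need genuine care. The remaining neighborhood bookkeeping is then routine and parallels the earlier theorems: folds by every angle in \((0,\delta)\) strictly increase the cross-distances, giving an open family of thickness-non-decreasing moves accumulating at the chosen move, which places it in the closure of the interior of moves that do not decrease thickness, exactly as the statement and the later algorithm require.
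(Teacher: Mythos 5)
Your proof is correct and follows essentially the same route as the paper's: a global rotation with a first-crossing argument when there is a unique minimum-height vertex, and otherwise a fold of the arc between two minimum-height vertices about their common horizontal axis (realized as a pair of reflections), with the convex projection supplying the separating vertical plane and thickness controlled in cylindrical coordinates via Lemma~\ref{lem:distance}. The one small difference is that the paper lets the fold angle range over \((0,2\theta_j)\) rather than stopping at your \(\delta\) --- the same cosine monotonicity applies there since \(|\pi-\phi|\) still decreases --- which upgrades your one-sided family of thickness-non-decreasing folds to the genuine two-sided neighborhood asserted in the statement.
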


\begin{proof}
There must be at least one vertex that attains the minimum height among vertexes. Consider first the case where there is exactly one vertex \(v\) which is at the minimum height. We can apply a rotation and change the minimum height vertex into a maximum height vertex. Intermediate value theorem tells us that for each other vertex, \(v'\), there is an intermediate rotation which makes the height of \(v\) equal the height of \(v'\). Since there are finitely many vertexes, there is a first vertex, \(w\) for which this happens. Since this is the first vertex to pass the height of \(v\), \(v\) must still be a minimum height vertex, and now \(w\) is also. Thus, we have increased the number of minimum height vertexes from one to at least two. As a rotation does not affect the knot thickness in any way, the result is shown in this case.

Now consider the case where there are at least two vertexes which attain the minimum height. Find a pair of such vertexes, \(v_1,v_n\), with the property that there is an arc of the knot connecting them with no minimum height vertexes, and with at least one vertex. The fact that the knot is not planar guarantees the existence of such an arc. Since the knot is convex in the projection, there is a vertical plane with the arc in the closure of one side and the complementary arc in the closure of the other side. We will use the fact that an arc may be rotated using a pair of reflection moves through a common axis. We can then use this to rotate the arc away from the separating plane, as in figure~\ref{fig:Rotation}. Each of the intermediate vertexes, \(\{v_i\}_2^{n-1}\) form an angle \(\theta_i\) with the line connecting \(v_1\) to \(v_n\), oriented so \(0\) represents a point lying at the height of \(v_1\) and \(v_n\), and \(\pi/2\) representing lying above the line connecting \(v_1\) to \(v_n\). There is a minimal such angel \(\theta_j\). Thus, if we rotate the arc connecting \(v_1\) to \(v_n\) down by an angle \(\theta_j\), we add \(v_j\) to the set of vertexes at the height of \(v_1\) and since \(\theta_j \leq \theta_i\) for \(2\leq i\leq n-1\), no vertex can have gone below \(v_1\) and so we have added a vertex to the set of minimum height vertexes.

\begin{figure}
\includegraphics{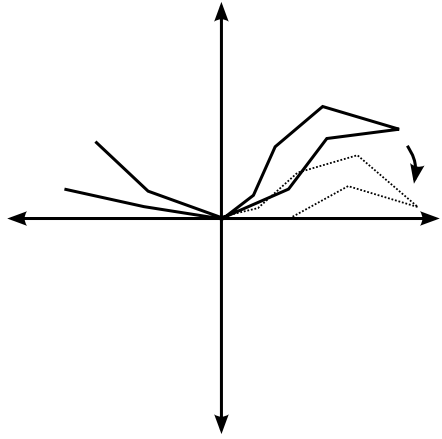}
\caption{An example of a rotation which increases the number of minimum height vertexes, shown from a projection which is parallel to the axis of rotation.}
\label{fig:Rotation}
\end{figure}

Consider two points in the realization, \(v, w\) and a rotation angle \(\theta\) with \(0\leq \theta \leq 2\theta_j\). We seek to apply Lemma~\ref{lem:distance} and so we would like to know that the distance between \(v,w\) is increased by the rotation, which means it suffices to show that their squared distance is increased. If both are unmoved by the rotation, or if both are rotated, then the distance between them is unchanged. Thus, we can suppose that \(v\) is moved, and \(w\) is fixed. We will consider cylindrical coordinates, with the axis of rotation the center of the cylinder. This causes the height and radius of both points to be unchanged. Further, since we are rotating about an axis which includes two vertexes which are absolute minimums, we can presume that the radii are both non-negative, that the fixed point, \(w\), has angle \(\phi_w\) between \(\pi\) and \(\pi/2\), and that the point being rotated, \(v\), has angle \(\phi_v\) which starts between \(\pi/2\) and \(\theta_j\). Let \(\phi\) be the difference between \(\phi_v\) and \(\phi_w\). The height coordinates are both constant, so the difference in the squared distance reduces to the following.

\begin{align}
(r_1\sin(\phi))^2 + (r_1\cos(\phi) - r_2)^2 &= r_1^2 - 2r_1r_2\cos(\phi) + r_2^2
\end{align}

Because \(\phi_v > \theta_j\), we know that \(\phi < \pi-\theta_j\) initially. Since \(\theta < 2\theta_j\) we can observe that \(|\pi - \phi|\) is reduced. This means that \(\cos(\phi)\) is decreased by the move, so the distance between the points is increased. This means that every rotation by an angle \(\theta\) with \(0 < \theta < 2\theta_j\) does not decrease thickness, so the particular rotation by \(\theta_j\) has a neighborhood which does not decrease thickness.
\end{proof}

We now combine theorems~\ref{thm:Convexify} and ~\ref{thm:MoreMinVerticies} for the following theorem.

\begin{theorem}
\label{thm:Planar}
Any knot may be made convex and planar using a sequence of finitely many moves consisting of affine transformations, reflections, or pairs of reflections, such that each move is contained in the closure of the interior of moves which will not increase the thickness.
\end{theorem}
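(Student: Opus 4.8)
The plan is to bootstrap from the two results already in hand: the theorem producing a knot with convex orthogonal projection into the $x$-$y$ plane, and Theorem~\ref{thm:MoreMinVerticies}, which flattens such a knot one step further. The quantity I would track is $m(K)$, the number of vertices attaining the minimum height; this is a positive integer bounded above by $n$, and the whole argument is a monovariant induction driving $m$ upward until it forces planarity.

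First I would apply the convex-projection theorem to replace $K$ by a knot whose projection $p(K)$ is convex, recording two features of that reduction: the moves it employs are affine transformations and reflections, each lying in the closure of the interior of the moves which will not increase the thickness, and the set of minimum-height vertices is nondecreasing along its sequence. If the resulting knot is already planar we are finished, since a planar knot with convex projection is a convex planar polygon. Otherwise $m \ge 1$ and the knot is non-planar, which is precisely the hypothesis of Theorem~\ref{thm:MoreMinVerticies}.

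Next I would invoke Theorem~\ref{thm:MoreMinVerticies} to obtain either a rotation (in the single-minimum case) or a pair of reflections that strictly increases $m$ by at least one, each such move again sitting in the closure of the interior of the moves which will not increase the thickness. The one subtlety is that the whole-knot rotation used in the single-minimum case need not leave $p(K)$ convex, so after this step I would re-run the convex-projection theorem. Because that theorem's moves leave the minimum-height count nondecreasing, one full round sends $m$ to some $m' \ge m+1$ while restoring convexity of the projection; each round is thus a finite block of affine transformations, reflections, and reflection pairs, all of the allowed admissible type.

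Finally, since $m$ is an integer in $\{1,\dots,n\}$ that strictly increases with each round, only finitely many rounds can occur; once no round applies, the knot must be planar, and it still has convex projection, hence is convex and planar. The concatenation of all rounds is the desired finite sequence of moves, each in the closure of the interior of the moves which will not increase the thickness. I expect the chief obstacle to be exactly the bookkeeping of this interplay — verifying that re-convexifying after a rotation never undoes the gain in minimum-height vertices — but this is guaranteed verbatim by the ``minimum-height vertices nondecreasing'' clause of the convex-projection theorem, which is precisely what legitimizes $m$ as a monovariant.
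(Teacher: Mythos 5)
Your proposal is correct and is essentially the paper's own argument: the paper likewise inducts on the number of vertices failing to attain the minimum height, alternating the convex-projection theorem (whose height-preserving, minimum-height-nondecreasing moves legitimize the monovariant) with Theorem~\ref{thm:MoreMinVerticies} to strictly increase the count of minimum-height vertices, re-convexifying in each round. Your monovariant phrasing is just the contrapositive bookkeeping of the paper's induction, with the same two ingredients and the same termination argument.
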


\begin{proof}
We do this using induction on the number of vertexes which fail to attain the minimum height. If there are no vertexes which fail to attain the minimum height then all vertexes are in a common plane. Applying Theorem~\ref{thm:Convexify} gives a finite sequence of moves which makes it convex, does not change the height, and is in the closure of the interior of moves which do not decrease the thickness and so we are done. Now suppose that we can achieve the desired result if there are less than \(n\) vertexes which attain the minimum height, and that \(K\) is a knot which has exactly \(n\) vertexes which fail to attain the minimum height. We can then apply Theorem~\ref{thm:Convexify} to get a finite sequence of \(j\) reflections which do not affect the height, are in the closure of the interior of moves which do not decrease the thickness, and give us a knot configuration \(K_2\) which is convex in projection. Since the height is unchanged we still have \(n\) vertexes which fail to attain the minimum height. We can then use Theorem~\ref{thm:MoreMinVerticies} to get a pair of reflections or a rotation in the closure of the interior of moves which do not decrease thickness, which will give us a knot \(K_3\) with fewer than \(n\) minimum vertexes. Finally, we can use the inductive hypothesis to get a sequence of \(k\) moves in the closure of the interior of moves which do not decrease thickness which results in a convex planar polygon. Thus, \(K\) can be made convex and planar using \(j+k+1\) moves in the closure of the interior of moves which do not decrease thickness. Therefore, by induction, any knot can be made convex and planar using a finite sequence of moves in the closure of the interior of moves which do not decrease thickness.
\end{proof}

\subsection{Convex and Planar made Regular}

\subsubsection{The Move}

The move will be a sequence of six reflections, but the net result can be explained much more clearly, and visualized in figure~\ref{fig:Regularize}. We will take four vertexes and look at the planar knot as being the quadrilateral joined directly by those four vertexes, along with up to four flaps.

The result of the move is equivalent to allowing only the distinguished four vertexes to pivot in the plane, and then pushing two opposite vertexes together, allowing the complementary two to spread apart. This causes two interior angles to shrink and two interior angles to grow. We will pick two vertexes whose interior angles are large and two whose interior angles are small so applying this move minimally will make one of the four regular.

\begin{figure}
\includegraphics{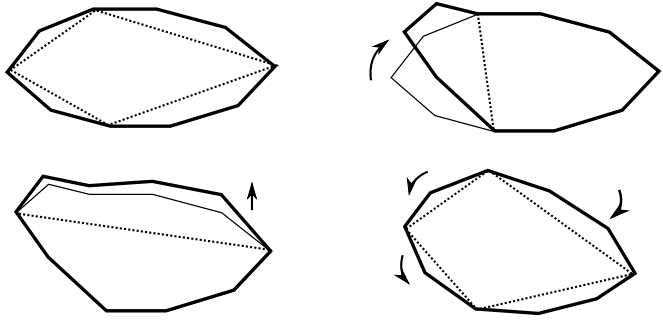}
\caption{An example of applying the hextuple reflection move to a decagon. The initial diagram has the four distinguished vertexes connected by dotted lines.}
\label{fig:Regularize}
\end{figure}

Informally, the way we achieve this type of move using reflections is that we reflect through a plane containing two of the flexible vertexes to bring the complementary vertexes closer together, and then use a reflection through the pair we moved to make the four planar. Finally we use up to four more reflections to bring the four flaps back into the common plane.

\subsubsection{Picking Vertexes}

As mentioned above, we find a collection of four vertexes, two which have large interior angle and two which have small interior angle, in an alternating pattern around the knot.

\begin{lemma}
\label{lem:VertexChoose}
For any convex non-regular polygon in the plane, there exists an ordered set of four vertexes, \(v_1,w_1,v_2,w_2\) with the interior angle of \(v_i\) smaller than regular, and the interior angle of \(w_i\) larger than regular.
\end{lemma}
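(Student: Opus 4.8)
The plan is to reduce the statement to a claim about how the interior angles are distributed around the polygon, and then to isolate the real difficulty in a four-vertex-type phenomenon. Write $\theta_{reg}=\pi(n-2)/n$ for the regular angle and record the elementary fact that, for any convex $n$-gon, the interior angles sum to $(n-2)\pi$, so their average is exactly $\theta_{reg}$. Consequently there is at least one strictly small vertex and at least one strictly large vertex: if every angle were $\ge\theta_{reg}$ with at least one strict, the sum would exceed $(n-2)\pi$, and symmetrically. This produces one vertex of each type for free, but it is not enough, since what the move requires is the alternating cyclic pattern small, large, small, large, that is, two small and two large vertices interleaved around the loop.

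First I would reformulate the goal. Label each non-regular vertex by the sign of $\theta_i-\theta_{reg}$ and read these labels cyclically; producing $v_1,w_1,v_2,w_2$ is exactly the statement that there are at least two maximal runs of each sign, equivalently that $\theta_i-\theta_{reg}$ changes sign at least four times going around the polygon. The averaging argument only guarantees two sign changes, so the entire content of the lemma is to rule out the \emph{two-block} configuration in which all small-angle vertices occupy one sub-arc and all large-angle vertices occupy the complementary sub-arc. The decisive observation is that this exclusion cannot follow from the angle sum alone, since any zero-sum cyclic sequence can be arranged in two blocks; it must use that $K$ is a \emph{closed equilateral} polygon. Morally this is a discrete four-vertex theorem: a closed convex curve cannot concentrate all of its above-average curvature on a single arc.

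To carry out the exclusion I would argue by contradiction, exploiting closure through the edge-direction description. Passing to the cumulative edge directions $\phi_k$ and comparing them to the evenly spaced directions of the regular polygon, a two-block pattern says precisely that the accumulated angular deviation from the regular schedule is unimodal, a single hump around the loop, whereas the closure relation $\sum_k e^{i\phi_k}=0$ is a first-harmonic condition on the $\phi_k$; a single-hump deviation is incompatible with this, which is the discrete Sturm--Hurwitz, or four-vertex, obstruction. A more self-contained route that I would also attempt is to take the chord joining the two junction vertices where the small arc meets the large arc, note that the sharply turning arc and the gently turning arc are two convex equilateral arcs sharing this chord, and force a contradiction between the chord length imposed by the sharply turning arc and that imposed by the gently turning arc via Schur's theorem (Theorem~\ref{thm:Schur}); some care is needed here because the two arcs carry different numbers of edges.

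I expect the main obstacle to be exactly this two-block exclusion, namely turning the intuition that "closure prevents all the turning from bunching on one arc" into a rigorous discrete statement; the averaging step and the reformulation in terms of sign changes are routine by comparison. Attention will also be required at the degenerate cases where some angles equal $\theta_{reg}$ exactly, so that one must still guarantee that the runs of \emph{strict} small and strict large angles interleave, and at the convexity book-keeping needed so that the chord in the second approach genuinely separates the two arcs.
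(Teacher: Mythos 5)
Your proposal is essentially correct, and your ``more self-contained route'' via the chord of the junction vertices is, at bottom, the same argument the paper gives --- both rest on the single tool that for a convex arc the end-to-end distance is monotone in the interior angles (Schur's Theorem~\ref{thm:Schur} / the Cauchy arm lemma), applied to a pair of complementary arcs with the regular polygon as intermediary. The difference is organizational: you argue by contradiction against the two-block configuration, while the paper constructs the quadruple greedily, first taking \(v_1,w_1\) separated only by regular vertices, then comparing the two-edge arc at \(w_1\) with its complement to force a second large vertex \(w_2\) on the far side of \(v_1\), then comparing the arc \(w_1 v_1 w_2\) with its complement to force \(v_2\). The paper's version dispatches the two points you flag as needing care. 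The ``different numbers of edges'' worry dissolves because one never compares the two arcs to each other: each arc is compared by Schur to its \emph{own} regular counterpart (same number of edges, all angles regular), and the two regular counterparts have equal chords because they are complementary arcs of the regular polygon; the strict inequality is supplied on the short arc by the elementary computation that a two-edge chord \(2\sin(\theta/2)\) is increasing in the interior angle. The degenerate cases with exactly-regular angles are absorbed by the greedy choices (``closest such vertex''), which keep the intermediate vertices of each compared arc on one side of regular. One hypothesis you should make explicit in either write-up: Schur's theorem requires the reference arc together with its chord to bound a convex region, which can fail for an arc whose total turning exceeds \(\pi\); here it holds for \emph{both} complementary arcs precisely because the whole polygon is convex, so every chord lies inside it. Finally, your first route (the cumulative-edge-direction / discrete Sturm--Hurwitz argument) is a genuinely different idea, but as written it is an assertion rather than a proof --- making ``a single-hump deviation is incompatible with closure'' rigorous is the discrete four-vertex theorem itself and would take more work than the Schur route, so it should not be relied on.
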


\begin{proof}
For this, we first note that if the polygon is not regular, then it must have a vertex, \(w_1\), with an interior angle which is too big and a vertex, \(v_1\), whose interior angle is too small, since the sum of the angles is fixed. Without loss of generality, choose \(v_1\) and \(w_1\) to be separated only by vertexes with regular angles in at least one of the arcs. Next, we consider the arc consisting of just the two edges connected to \(w_1\). This arc has longer end to end distance than if the polygon was regular. This means that the complementary arc must also have longer end to end distance. Since we are dealing with a convex polygon, the end to end distance is an increasing function of the interior angles, and so there must be some second vertex \(w_2\) which has an interior angle which is too large. This vertex cannot be between \(v_1\) and \(w_1\) so without loss of generality, choose it to be the closest such vertex to \(v_1\) on the other side from \(w_1\). Thus, the arc connecting \(w_1\) to \(w_2\) through \(v_1\) consists only of vertexes with angles which are regular or smaller than regular. This means that this arc has shorter end-to-end distance than in the regular polygon, so the complementary arc has shorter end-to-end distance than the regular polygon, meaning it must also have a vertex, \(v_2\), which has an angle which is smaller than regular. Thus, we have a sequence of ordered vertexes \(v_1,w_1,v_2,w_2\), with \(v_1,v_2\) having angles which are smaller than regular and with \(w_1,w_2\) having angles which are bigger than regular.
\end{proof}

\subsubsection{Choosing Reflections}

We will assume the knot lies in the \(x-y\) plane and form a transformation of the knot \(T\), consisting of up to six reflections which result in \(T(K)\) begin planar and having more regular vertexes than \(K\). The first reflection will reflect the arc connecting \(w_1\) to \(w_2\) through \(v_1\) across a plane which makes an angle of \(\theta\) to the \(x-y\) plane. There are generally two such choices so we will choose the one which puts the vertex \(v_1\) to a non-negative \(z\) coordinate. The second will reflect the arc connecting \(v_1\) to \(v_2\) through \(w_2\), and will be chosen in such a way as to make the four vertexes \(v_1,v_2,w_1,w_2\) coplanar. There are generally two such choices so choose the one which maximizes the \(z\) coordinate of \(w_2\). Finally, the four arcs connecting the vertexes of our highlighted quadrilateral may be out of alignment, so we will reflect them each across a plane which will bring them into the common plane of \(v_1,v_2,w_1,w_2\). Again there are two such choices so we will choose the one which moves each flap the shortest distance. This gives a family of transformations, \(T_\theta\), for each real number, \(\theta\) between \(0\) and \(\pi/2\), with \(T_{\theta}(K)\) continuous in \(\theta\). This leads us to the next result.

\begin{theorem}
\label{thm:Regularize}
Any convex planar knot can be made regular using a sequence of finitely many moves, each move consisting of at most six reflections, with each such move in the interior of moves which do not decrease thickness.
\end{theorem}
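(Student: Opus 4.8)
The plan is to induct on the number of \emph{irregular} vertexes, those whose interior angle differs from \(\pi(n-2)/n\). If \(K\) is already regular there is nothing to do. Otherwise Lemma~\ref{lem:VertexChoose} supplies an alternating quadruple \(v_1,w_1,v_2,w_2\) with the \(v_i\) below regular and the \(w_i\) above regular, and the construction above gives a family \(T_\theta\), continuous in \(\theta\in[0,\pi/2]\) with \(T_0\) the identity. I would record the four interior angles at the distinguished vertexes as continuous functions of \(\theta\); by design the two large (\(w\)) angles decrease toward regular and the two small (\(v\)) angles increase toward regular, while every other interior angle is left untouched because each of the four flaps is carried rigidly by the reflections. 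The intermediate value theorem then produces a first \(\theta^*>0\) at which one of the four distinguished angles equals regular exactly, and I take \(T_{\theta^*}\) as the move. Since only these four angles move and none crosses the regular value before \(\theta^*\), the knot \(T_{\theta^*}(K)\) is planar with strictly more regular vertexes, so once the thickness conditions are verified the inductive hypothesis finishes the proof.

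Before reapplying the hypothesis I would check that \(T_{\theta^*}(K)\) is again convex and planar. Planarity is immediate from the last four reflections, which place each flap back in the common plane of the four distinguished vertexes. For convexity, every flap-interior angle is preserved and hence stays below \(\pi\), and the four distinguished angles only move toward \(\pi(n-2)/n<\pi\), so all turning angles remain positive; as the total turning is an integer multiple of \(2\pi\) depending continuously on the configuration it stays \(2\pi\), and a closed equilateral polygon with all turning angles positive and summing to \(2\pi\) is convex and simple. This keeps us inside the class to which Lemma~\ref{lem:VertexChoose} and Corollary~\ref{cor:PlanarThickness} apply.

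The thickness accounting is the heart of the argument and the place where this step departs from the earlier ones. The move is not an expansion, since it deliberately shrinks the two large angles, so Lemma~\ref{lem:distance} is unavailable. Instead I would invoke Corollary~\ref{cor:PlanarThickness}: both \(K\) and \(T_{\theta^*}(K)\) are convex and planar, so their injectivity radii equal \(MinRad\), and for an equilateral polygon \(MinRad=\tfrac12\tan(\theta_{\min}/2)\) is controlled by the smallest interior angle alone, the arclength being fixed. The flap-interior angles contribute the same values as before; the two small angles only increase; and the two large angles, though decreasing, stay at or above regular, which for a non-regular polygon is strictly above the minimum angle. Hence \(\theta_{\min}\) does not decrease and the thickness of \(T_{\theta^*}(K)\) is at least that of \(K\).

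Finally, to place \(T_{\theta^*}\) in the interior of thickness-nondecreasing moves, I would perturb the six reflection planes slightly and bound the injectivity radius of the resulting (generally non-planar) knots below by \(R(K)\); this synthesis is the main obstacle. The key observation is that each reflection is an isometry, so every flap-interior angle, including any that realizes \(MinRad(T_{\theta^*}(K))=R(K)\), is preserved exactly under all nearby composite moves and cannot drop. The junction angles vary continuously, but at \(T_{\theta^*}\) each carries strict slack: the two \(v\)-angles have strictly increased above \(\theta_{\min}(K)\), while the \(w\)-angles and the newly regular angle sit at or above regular, which is strictly above \(\theta_{\min}(K)\), so continuity keeps all junction contributions above \(R(K)\) on a neighborhood. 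For the long-range term, which was inactive because a convex planar polygon has empty \(TC\), I would use Lemma~\ref{lem:boundaryTurning} with continuity: any pair entering \(TC\) under a small perturbation lies near the \(tc=\pi\) boundary, where the separation is at least \(2\,MinRad\ge 2R(K)\), so the long-range radius also stays at least \(R(K)\). Combining these shows \(R\ge R(K)\) throughout a neighborhood of \(T_{\theta^*}\), which is the required interior property, and assembling the moves across the induction yields the finite regularizing sequence.
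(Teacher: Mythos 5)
Your overall architecture matches the paper's: choose the quadruple via Lemma~\ref{lem:VertexChoose}, run the one-parameter family \(T_\theta\), use the intermediate value theorem to find the first \(\theta\) at which one of the four distinguished angles becomes regular, reduce the thickness question to \(MinRad\) via Corollary~\ref{cor:PlanarThickness}, and iterate. But there is one genuine gap at the heart of the IVT step: you assert that ``by design'' the two large angles decrease toward regular and the two small ones increase toward regular, and then conclude that some angle \emph{attains} the regular value at a first \(\theta^*\in(0,\pi/2]\). Moving \emph{toward} regular does not imply \emph{reaching} regular, and nothing in your write-up rules out all four angles stalling short of it on the whole interval, in which case IVT produces nothing. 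The paper closes exactly this hole with a separate argument: it sets \(f(T_\theta(K))=(e_1\times e_2)\cdot(e_3\times e_4)\) for the inscribed quadrilateral, checks that \(f\) changes sign between \(\theta=0\) and \(\theta=\pi/2\), and concludes that at some \(\theta_0\) one of the cross products vanishes, i.e.\ one \(v_i\) becomes collinear with \(w_1,w_2\). Its turning angle has therefore passed from larger-than-regular down through zero, so it must equal the (positive) regular turning angle at some intermediate \(\theta_1\). You need this, or an equivalent quantitative statement about the linkage, before you may speak of ``the first \(\theta^*\) at which one of the four angles equals regular.''

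On the interior-of-moves condition your treatment is actually more explicit than the paper's, which simply splits into the case where the minimal angle is unchanged (asserting nearby moves preserve thickness) and the case where it strictly increases (where continuity of \(R\) suffices). Your observation that flap angles are preserved exactly because each flap is carried by isometries is a good one. However, your long-range estimate does not quite close: Lemma~\ref{lem:boundaryTurning} bounds \(\|a-b\|\ge 2\,MinRad(K')\) only for pairs \emph{on} the boundary of \(TC(K')\); pairs strictly inside \(TC(K')\) created by a perturbation are merely \emph{near} that boundary, so continuity yields \(\|a-b\|\ge 2\,MinRad(K')-\epsilon\), which falls short of \(2R(K)\) precisely in the delicate case where the move leaves the minimal angle (hence the thickness) unchanged. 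Be aware that in that case only the weaker ``closure of the interior'' conclusion is cleanly available, which is what the earlier theorems in the paper actually use.
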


\begin{proof}
We will show that there is a choice of \(\theta\) for which \(T_\theta(K)\) has exactly one of the four vertexes have a regular interior angle without switching any of the other four between larger than regular interior angle and smaller than regular interior angle. For this, we consider the vectors \(e_1 = w_1-v_1\), \(e_2 = v_2-w_1\), \(e_3 = w_2-v_2\), and \(e_4 = v_2-w_1\). These are the four edges of the inscribed quadrilateral formed by those four vertexes. We then look at the function \(f(K) = (e_1 \times e_2) \cdot (e_3 \times e_4)\). For \(\theta = 0\), \(f(T_\theta(K)) >0\). For \(\theta = \pi/2\), \(f(T_\theta(K)) < 0\). Thus, since \(f\) is a continuous function and \(T_\theta\) is a continuous function of \(\theta\), we use the intermediate value theorem to observe that there is a choice \(\theta_0\) for which \(f(T_{\theta_0}(K)) = 0\), so \(e_1\times e_2\) and \(e_3\times e_4\) are perpendicular vectors.. The configuration \(T_\theta(K)\) is always planar, so the vectors \(e_1\times e_2\) and \(e_3 \times e_4\) are parallel. This means that the two vectors are both parallel and perpendicular, which can only be the case if one of them is the zero vector. Thus, either \(e_1\times e_2\) is zero or \(e_3\times e_4\) is zero, so in \(T_{\theta_0}(K)\), one of the \(v_i\) is collinear with \(w_1\) and \(w_2\).

For the remainder of the proof \(\theta \leq \theta_0\). This means that in particular \(T_\theta(K)\) is still an embedding. Thus, we have a well defined notion of the sign of a turning angle, where the sign of all turning angles in \(T_0(K)\) are positive. The fact that the interior angle of \(v_i\) in \(T_0(K)\) is smaller than regular tells us the turning angle at \(v_i\) in \(T_0(K)\) is larger than regular. Because \(v_i\) is collinear with \(w_1\) and \(w_2\) in \(T_{\theta_0}(K)\), the turning angle of \(v_i\) in \(T_{\theta_0}(K)\) is negative. This means that by the intermediate value theorem, there exists an angle \(\theta_1\) with the turning angle of \(v_i\) in \(T_{\theta_1}(K)\) regular.

There may be multiple choices of \(\theta\) with \(T_\theta(K)\) having a greater number of regular angles than \(K\). We then choose the smallest \(\theta\) which makes at least one of the four changing angles regular. Thus, one particular angle is made regular, and the remainder stay on the same side of regular, either staying larger than regular or staying smaller than regular. We note that from corollary~\ref{cor:PlanarThickness}, the long range thickness need not be considered. One possibility is that the smallest angle is unchanged, which means that for any \(T\) in a neighborhood of \(T_\theta\), the thickness of \(T(K)\) is the same as the thickness of \(K\). The other possibility is that the smallest angle is one of the \(v_i\) and so is larger, meaning that \(T_\theta(K)\) is thicker than \(K\) and so since thickness is continuous, a neighborhood of \(T_\theta\) does not decrease thickness. Therefore we have increased the number of regular vertexes using a move on the interior of moves which do not decrease thickness. Applying this up to \(n\) times, where \(n\) is the number of vertexes, we can guarantee that the end result has at least \(n\) regular vertexes, and so is the regular planar polygon.
\end{proof}

\subsection{Result}

\begin{theorem}
\label{thm:FullConnected}
Any equilateral polygonal knot can be made into a regular convex planar polygon using a finite number moves, consisting only of rigid motions, or up to six reflections, with each move in the closure of the interior of moves which do not decrease thickness.
\end{theorem}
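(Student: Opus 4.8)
The plan is to prove this by directly concatenating the two culminating results of the previous subsections, since between them they already carry a knot all the way from an arbitrary configuration to the regular planar polygon. First I would invoke Theorem~\ref{thm:Planar} to reduce an arbitrary equilateral knot $K$ to a knot $K'$ which is both convex and planar, using a finite sequence of moves each of which is an affine transformation, a single reflection, or a pair of reflections, and each lying in the closure of the interior of moves which do not decrease thickness. Then I would feed $K'$, which is now a convex planar knot, into Theorem~\ref{thm:Regularize} to obtain a second finite sequence of moves, each consisting of at most six reflections and each lying in the interior of moves which do not decrease thickness, whose composite carries $K'$ to the regular planar polygon. Stringing these two finite sequences together yields a single finite sequence of moves taking $K$ to the regular convex planar polygon.

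The remaining work is purely bookkeeping, checking that every move produced by the two cited theorems falls into one of the two categories permitted by the statement, namely a rigid motion or a block of at most six reflections. An affine transformation in the sense of this paper is a composition of translations and rotations, hence a rigid motion; a single reflection and a pair of reflections are each trivially blocks of at most six reflections; and the moves coming from Theorem~\ref{thm:Regularize} are by construction blocks of at most six reflections. So every move in the concatenated sequence is of the permitted type. The one point needing a moment's care is the neighborhood condition: Theorem~\ref{thm:Planar} already delivers moves in the closure of the interior of thickness-non-decreasing moves, while Theorem~\ref{thm:Regularize} delivers moves only in the interior of that same set; since the interior is contained in its own closure, every move from the second phase also lies in the closure of the interior, and the condition holds uniformly across the combined sequence.

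I would not expect any genuine obstacle here, as the theorem is essentially an assembly of Theorems~\ref{thm:Planar} and~\ref{thm:Regularize} with no new geometric content of its own. The subtlest aspect is simply reconciling the slightly different descriptions of the admissible move classes (affine transformations and pairs of reflections versus ``rigid motions, or up to six reflections'') and the ``interior'' versus ``closure of the interior'' phrasing across the two results, both of which resolve in favor of the weaker conditions demanded by the present statement.
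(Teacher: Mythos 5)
Your proposal is correct and follows exactly the same route as the paper, which also proves this theorem by concatenating Theorems~\ref{thm:Planar} and~\ref{thm:Regularize}. Your additional bookkeeping about move types and the interior-versus-closure-of-interior reconciliation is sound and in fact slightly more careful than the paper's own two-sentence argument.
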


\begin{proof}
This is a simple combination of the above theorems. First, Theorem~\ref{thm:Planar} shows that a finite number of such moves can make any equilateral polygonal knot planar and convex, and then Theorem~\ref{thm:Regularize} allows another finite number of such moves to make any equilateral polygonal planar and convex knot regular. This gives us the desired result.
\end{proof}

\section{Monte-Carlo Markov Chains}

In this section, we will prove that we can build an ergodic Markov chain for sampling the space of equilateral knots with thickness. The above theorem will be necessary in showing ergodicity.

\subsection{The Markov Chain}

The general definition of a Markov chain is a sequence in a state space \(X\) where each entry in the sequence of states is independent of every state before the immediate predecessor. We will refer to our Markov chain as \(\Phi\). Our Markov chain is given by a set of states \(X\) in \(\R^n\), a noise parameter \(W\) which is an open subset of \(\R^p\) with a probability measure \(\mu_W\), and a function \(F:X\times W \rightarrow X\). The space of states will be \(X = \Equ(n,t)\). The noise parameter \(W\) will be a product of reflection moves. The space of reflection moves \(R\) is of the form \((x,v_1,v_2,\theta)\), where \(x\) is a real number in \((0,1)\) used to determine if we check the thickness, \(v_1,v_2\) are a random pair of vertexes, and \(\theta\) is an angle. While this \(R\) isn't clearly an open subset of \(\R^p\), it can be modeled in that way by choosing the continuous variables from a rectangle, and selecting the discreet variables by using a disjoint collection of rectangles. Thus, we use 
\begin{align}
R = \bigcup_{0\leq i < j \leq n} (0,2\pi)\times (2(n*i+j),2(n*i+j)+1)
\end{align}
where \(n\) is the number of edges. This space \(R\) is a disjoint collection of rectangles. The \(y\) coordinate is an angle. The width of each rectangle is \(1\). Which rectangle a point is in determines the \(i,j\) which represents a pair of distinct vertexes. The complete noise space is \(W = R^N\), where \(N\) is a cap on how many reflections we are allowed between checking the thickness. We require that \(N \geq 6\) so that we may use Theorem~\ref{thm:FullConnected}. The probability measure on \(W\) is just a multiple of the Lebesgue measure on this bounded space, which means that it has a constant probability distribution \(\gamma_W\) which is supported on all of \(W\) and is lower semi-continuous since \(W\) is open.

The function F in our Markov chain \(\Phi\) will be defined as follows. We have a collection of \(N\) probabilities \(p_k\) which represent the probability of applying the \(k^{th}\) reflection if we already applied the \(k-1^{st}\) reflection. Given a \(w \in W = R^N\) we find a sequence of variables \(a_k = x_k - \lfloor x_k\rfloor\), where \(x_k\) is the \(x\) coordinate of \(W_k\cong R\). These \(a_k\) live in \((0,1)\). We find \(m\) which is one less than the first \(k\) with \(a_k > p_k\), or \(m=N\) if no such \(k\) exists. This \(m\) is how many reflections we will apply. When we apply a reflection to a knot \(K\in\Equ(n,t)\), we reflect the arc connecting the vertexes \(v_i\) and \(v_j\) across a plane determined by the angle \(\theta\), where \(i,j\) are indicated by which component of \(W_k \cong R\) we are in and \(\theta = y_k\) from \(W_k \cong R\). Thus, we get a reflection move \(r_k\) for each \(k\). This lets us define \(F(K,w) = r_m r_{m-1}\ldots r_1 (K)\) if \(r_m r_{m-1}\ldots r_1 (K) \in \Equ(n,t)\) and \(K\) otherwise. This means \(F\) is a piecewise function with two pieces, one is the identity and the other is smooth.

\subsection{Forward Accessible}
We will use \(A_+^n(x) \subseteq X\) to denote the set of states \(y\) for which there is a sequence of exactly \(n\) moves on the interior of a smooth section starting at \(x\) and ending at \(y\). We will also define \(A_+(x) = \bigcup_{n=1}^{\infty} A^n(x)\). One useful property a Markov chain can have is forward accessibility. A Markov chain is forward accessible if \(A_+(x)\) is a set with non-empty interior, for every value of \(x\). When the motion of the Markov chain is smooth, then the set \(A_+^n(x)\) reduces to simply points reachable in \(n\) steps, but since our function is merely piecewise smooth we need the more general definition. We will prove our Markov chain is forward accessible. This will be a direct continuation of the conclusion of the previous chapter.

\begin{lemma}
\label{lem:Reachable}
For every knot \(K\), there is a finite sequence of moves \(m_i\), \(0\leq i< N\) giving a finite sequence of knots \(K_i\), such that \(K_0 = K\), \(K_N\) is the regular planar polygon, and each \(m_i\) has a neighborhood of moves \(M_i\), with \(m(K_i) \in \Equ(n,t)\) for every \(m\in M_i\).
\end{lemma}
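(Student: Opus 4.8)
The plan is to translate the geometric construction of Theorem~\ref{thm:FullConnected} into the language of the Markov chain, where the fundamental task is to verify that each move produced by that theorem can be realized by the noise space $W = R^N$ and that the required open neighborhood of moves exists. First I would invoke Theorem~\ref{thm:FullConnected} to obtain a finite sequence of moves taking $K$ to the regular planar polygon, where each move is either a rigid motion or a composition of up to six reflections, and each lies in the closure of the interior of moves which do not decrease thickness. The key observation is that each such move is built from at most six reflections, and the requirement $N \geq 6$ on the noise cap is exactly what lets a single step $F(K_i, \cdot)$ encode an entire move $m_i$ as a product $r_6 \cdots r_1$ of reflections parametrized by the data in $R^N$.

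The main conceptual gap to close is that the moves in Theorem~\ref{thm:FullConnected} are stated to lie in the \emph{closure} of the interior of thickness-non-decreasing moves, whereas the lemma demands a genuine open neighborhood $M_i$ with $m(K_i) \in \Equ(n,t)$ for every $m \in M_i$. My plan is to handle this by a perturbation argument: since the starting knot $K$ has thickness strictly exceeding $t$ is not guaranteed, but every intermediate knot $K_i$ in the sequence lies in $\Equ(n,t)$, and each move $m_i$ lies in the closure of the interior of thickness-non-decreasing moves, I can pick a nearby move $m_i'$ that lies in the actual interior while keeping $m_i'(K_i)$ within $\Equ(n,t)$. Because thickness is a continuous function (the third bulleted property after Corollary~\ref{cor:PlanarThickness}), and the reflection data depend continuously on the angle and plane parameters, the preimage under $F$ of $\Equ(n,t)$ is open in the move parameters near such an interior point, yielding the neighborhood $M_i$. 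The rigid-motion moves (rotations) from Theorems~\ref{thm:MoreMinVerticies} and~\ref{thm:Planar} must be handled separately, since $F$ only applies reflections; but a rotation of an arc is realized as a pair of reflections through a common axis (as used in the proof of Theorem~\ref{thm:MoreMinVerticies}), and a global rigid motion leaves the knot in the same equivalence class and hence may be absorbed.

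The step I expect to be the genuine obstacle is reconciling the \emph{discrete} pair-of-vertices choice built into $R$ with the specific reflections demanded by the earlier theorems, together with the accounting of the noise variable $x \in (0,1)$ that governs whether a reflection is applied. Each $r_k$ in the encoding reflects the arc between a chosen pair $v_i, v_j$ across a plane through the line they determine, at angle $\theta = y_k$; I must check that every reflection appearing in the construction is of exactly this form, namely a reflection of one of the two arcs joining a pair of vertices across a plane containing the line through those vertices. This is consistent with the definition of a reflection move in Section~2.3, so the verification is a matter of bookkeeping rather than new geometry, but it requires care that the plane chosen in each earlier theorem does pass through the chosen vertex pair. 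Finally I would confirm that taking $N$ moves in the lemma (padding the sequence with trivial identity moves where a given step uses fewer than six reflections, and choosing the $a_k$ thresholds accordingly) gives the claimed uniform bound, so that every neighborhood $M_i$ of moves indeed maps $K_i$ into $\Equ(n,t)$ and the sequence terminates at the regular planar polygon.
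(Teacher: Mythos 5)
You correctly identify the central difficulty: Theorem~\ref{thm:FullConnected} only places each move in the \emph{closure} of the interior of thickness non-decreasing moves, while the lemma demands a genuine open neighborhood \(M_i\) of moves keeping the image in \(\Equ(n,t)\). But your proposed fix --- perturb each move \(m_i\) to a nearby \(m_i'\) lying in the actual interior --- creates a problem you do not resolve: once the moves are perturbed, the composed sequence no longer terminates at the regular planar polygon, whereas the lemma requires \(K_N\) to \emph{be} the regular planar polygon. (There is also a chaining issue: perturbing \(m_0\) changes \(K_1\), so the later moves, which were constructed for the unperturbed knots, are not automatically still admissible.) The paper closes this gap with a two-stage construction. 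First it perturbs the entire sequence from Theorem~\ref{thm:FullConnected} just enough that the endpoint lands in the open set \(\mathrm{int}(\Equ(n,t))\) near the regular polygon, with every perturbed move in the interior of thickness non-decreasing moves. It then applies Theorem~\ref{thm:FullConnected} a \emph{second} time starting from that interior knot: since the starting thickness is now some \(t' > t\) and the moves do not decrease thickness, every intermediate knot is strictly interior to \(\Equ(n,t)\), so continuity of thickness alone furnishes the neighborhoods \(M_i\) without any further perturbation, and this second pass ends exactly at the regular planar polygon. Concatenating the two passes gives the lemma. Without some device of this kind your argument proves only that a neighborhood of the regular polygon is reachable, not the regular polygon itself.

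A secondary remark: much of your proposal concerns encoding the moves into the noise space \(R^N\), the cap \(N \geq 6\), and the thresholds \(a_k\). That bookkeeping belongs to the definition of \(F\) and to Theorem~\ref{thm:Accessible}; the present lemma is a purely geometric statement about sequences of moves and their neighborhoods, so that material, while not wrong, is not where the work of this proof lies.
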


\begin{proof}
By Theorem~\ref{thm:FullConnected}, there is a sequence of \(N_1\) moves \(m^1_i\) with \(K^1_0 = K\), \(K^1_{N_1}\) the regular planar polygon, and each move \(m^1_i\) in the closure of the interior of moves which do not decrease thickness. This means that for any open neighborhood of the regular planar polygon, such as int(\(\Equ(n,t)\)), we can take a nearby sequence of \(N_1\) moves \(m^2_i\) with \(K^2_0 = K\), \(K^2_{N_1}\) in the open set int(\(\Equ(n,t)\)), and each \(m^2_i\) on the interior of moves which do not decrease thickness, and so in particular has a neighborhood of moves \(M^2_i\) with \(m(K_i)\in\Equ(n,t)\) for every \(m\in M^2_i\). We also can use the same theorem to generate a sequence of \(N_2\) moves \(m^3_i\) with \(K^3_0 = K^2_{N_1}\), \(K^3_{N_2}\) the regular planar polygon and each move \(m^3_i\) not decreasing thickness. Since each \(K^3\) is on the interior of \(\Equ(n,t)\), there is a neighborhood \(M^3_i\) of each \(m^3_i\) with \(m(K^3_i) \in\Equ(n,t)\) for every \(m\in M^3_i\). Concatenating these two sequences of moves, we get the desired result.
\end{proof}

\begin{theorem}
\label{thm:Accessible}
The Markov chain \(\Phi\) is forward accessible with int\((\Equ(n,t))\subseteq A_+(K)\) for every knot \(K\) in \(\Equ(n,t)\).
\end{theorem}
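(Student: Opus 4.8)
The goal is to show that for every knot $K\in\Equ(n,t)$, the set of forward accessible states $A_+(K)$ contains the interior of $\Equ(n,t)$. My plan is to combine the reachability result of Lemma~\ref{lem:Reachable} with the structure of the Markov chain $\Phi$, exploiting the fact that the regular planar polygon serves as a universal intermediate point.

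\textbf{The plan.} First I would observe that Lemma~\ref{lem:Reachable} already produces, for any starting knot $K$, a finite sequence of moves taking $K$ to the regular planar polygon, where each move $m_i$ has a whole neighborhood $M_i$ of moves keeping the knot inside $\Equ(n,t)$. The essential point is that this means each intermediate knot $K_i$ is reached by an \emph{open} set of move-parameters that never exits the state space, so the corresponding parameter in the noise space $W$ is an interior point, i.e.\ the trajectory lives on the interior of a smooth section of $F$. The next step is to reconcile the ``up to six reflections per move'' structure of Theorem~\ref{thm:FullConnected} with the single-reflection steps of the chain: since each composite move $m_i$ is a product of at most six reflection moves, and the noise cap satisfies $N\geq 6$, each $m_i$ can be realized inside one application of $F$ using the probabilities $p_k$ to select exactly the needed number of reflections. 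Chaining these applications realizes the whole sequence as a finite walk of the chain.

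\textbf{Reaching the interior.} To get not just the single regular polygon but an open neighborhood of it, I would use the second half of Lemma~\ref{lem:Reachable}: by choosing the approximating sequence $m^2_i$, one reaches a point in $\mathrm{int}(\Equ(n,t))$ where every move has a genuine open neighborhood keeping the knot in the interior. From such an interior point, the chain moves \emph{smoothly} in a full-dimensional way: small perturbations of the reflection angles and vertex choices sweep out an open set of nearby knots. Concretely, from the regular planar polygon (or a nearby interior point), a generic sequence of reflection moves has a surjective-onto-an-open-set derivative in the $3n$ coordinates, so $A_+^k$ of that interior point contains an open set. The crux is then to argue that this open set can be taken to be (or to exhaust) $\mathrm{int}(\Equ(n,t))$: by connectedness and the fact that $\mathrm{int}(\Equ(n,t))$ is an open subset of a manifold, together with the time-reversibility of reflection moves, any interior target $K'$ is itself reachable from the regular polygon by running the Lemma~\ref{lem:Reachable} construction backwards for $K'$ and inverting each reflection (reflections are involutions, hence each move is its own inverse up to isometry).

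\textbf{The main obstacle.} I expect the hard part to be the bookkeeping that turns ``closure of the interior of non-decreasing moves'' (the conclusion of Theorem~\ref{thm:FullConnected}) into ``interior,'' i.e.\ strictly thickness-preserving open neighborhoods. A move in the closure of the interior could, a priori, sit on the boundary where thickness equals exactly $t$; the approximation trick in Lemma~\ref{lem:Reachable} pushes the terminal knot into $\mathrm{int}(\Equ(n,t))$, but one must verify that the \emph{intermediate} knots of the perturbed sequence also stay in $\Equ(n,t)$ with open neighborhoods, so that every step is a legitimate interior step of the chain. I would handle this by appealing to the continuity of the thickness function $R$ (Rawdon's Theorem~11, quoted earlier) together with the openness of $M^2_i$ and $M^3_i$ in Lemma~\ref{lem:Reachable}: continuity guarantees that a sufficiently small perturbation of a non-thickness-decreasing sequence remains thickness-admissible at every intermediate stage. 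Once that is established, the forward accessible set inherits the open neighborhood from the final interior step, and since the target interior knot was arbitrary, $\mathrm{int}(\Equ(n,t))\subseteq A_+(K)$, completing the proof.
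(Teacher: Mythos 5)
Your proposal follows essentially the same route as the paper: run Lemma~\ref{lem:Reachable} forward from \(K\) to the regular planar polygon, then reach an arbitrary interior target \(K'\) by inverting the Lemma~\ref{lem:Reachable} path for \(K'\), using that reflections are involutions and that the reversed path stays in int\((\Equ(n,t))\). The one refinement the paper adds is to note that \(K'\in\) int\((\Equ(n,t))\) means \(K'\in\Equ(n,t')\) for some \(t'>t\) and to run the lemma at level \(t'\), which forces every intermediate knot of the reversed path into the interior --- a cleaner way to close the gap you flag in your final paragraph than appealing to continuity of \(R\) alone.
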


\begin{proof}
In short we will use lemma~\ref{lem:Reachable} to form a sequence connecting two knots by way of the regular planar polygon.	

Let \(K\in\Equ(n,t)\) and \(K'\in\) int(\(\Equ(n,t)\)). Then \(K' \in\Equ(n,t')\) where \(t' > t\). By Lemma~\ref{lem:Reachable}, there is a sequence of \(N_1\) moves \(m^1_i\) with \(K^1_0 = K\), \(K_{N_1}\) the regular planar polygon, and each \(m^1_i\) in an open neighborhood \(M^1_i\) with \(m(K_i) \in\Equ(n,t)\) for all \(m \in M_i\). By the same lemma there is also a sequence of \(N_2\) moves \(m^2_i\) with \(K^2_0 = K'\), \(K^2_{N_2}\) the regular planar polygon, and each \(K^2_i\) in \(\Equ(n,t') \subseteq\) int(\(\Equ(n,t)\)). Each reflection move can be repeated on the image knot, which means that each move \(m^2_i\) has an inverse move \(m^3_i := (m^2)^{-1}_{N_2-i}\). Thus, \(K^3_0\) is the regular planar polygon, \(K^3_{N_2} = K'\), and each \(K^3_i\) is in int(\(\Equ(n,t)\)). Since each \(m^3_i\) has an image in the interior of \(\Equ(n,t)\), there is a neighborhood of each, \(M^3_i\) with \(m(K^3_i)\) in \(\Equ(n,t)\) for every \(m\in M^3_i\). Concatenating these two sequences of moves gives a sequence of \(N\) moves \(m^4_i\) with \(K^4_0 = K\), \(K^4_N = K'\), and each move \(m^4_i\) having a neighborhood of moves \(M^4_i\) with \(m(\hat{K}_i)\in\Equ(n,t)\) for all \(m\in M^4_i\). This tells us that the sequence of moves \(m^4_i\) is on the interior of a smooth section, and so its image \(K'\) is in \(A_+(K)\).
\end{proof}

\subsection{T-Chains}
A very useful property for a Markov chain to have is being a \(T\)-chain. The precise definition requires a couple of preliminaries, but we will define it precisely in this section. To motivate it, a \(T\)-chain has moves of the Markov chain respecting the topology of the state space. We first define the \emph{transition probabilities} \(P^n(x,S)\) which is the probability of starting at \(x\), and ending in \(S\) after \(n\) steps. These are examples of \emph{transition kernels} which are functions \(\kappa: X\times \mathcal{B}(X) \rightarrow [0,1]\) with \(\mathcal{B}(X)\) the Borel sets. A transition kernel \(\kappa\) is \emph{stochastic} if \(\kappa(x,X) = 1\) for all \(x\in X\) and is \emph{substochastic} if \(\kappa(x,X) \leq 1\) for all \(x\in X\). This allows us to note that \(P^n(x,X)\) is the probability of staying in the total state space after \(n\) steps, and so these standard probability transition functions are stochastic. From the standard transition probability and a sampling on the natural numbers \(a:\N\rightarrow [0,1]\), we can build the stochastic transition kernel \(K_a(x, S) := \sum_{n=0}^{\infty} P^n(x,S) a(n)\).

This finally allows us to define a \emph{\(T\)-chain} as a Markov chain for which there is a substochastic transition kernel \(T\) and a sampling distribution \(a\) with \(K_a(x,S) \geq T(x,S)\), \(T(\cdot,S)\) lower semicontinuous, and \(T(x,X) > 0 \) for every \(x\). The lower semicontinuity is what lets us refer to \(T\) as the continuous piece of \(K_a\), while \(T(x,X) > 0\) for every \(x\) tells us that there is a significant continuous piece of \(K_a\). With these we utilize Propositions 7.1.5 and 6.2.4 from Meyn and Tweedie's book\cite{MeynTweedie93}.

\begin{theorem}
\label{thm:TKernals}
If a Markov chain which is forward accessible has a noise space \(W\) with a lower semi continuous probability density function \(\gamma_W\), then for each \(x\) in the state space \(X\), there is a sampling distribution \(a_x:\N\rightarrow [0,1]\) and a transition kernel \(T_x\) with \(T_x(x,X) \neq 0\), \(T_x(x,\cdot) \leq K_{a_x}(x,\cdot)\) and \(T_x(\cdot,S)\) lower semi-continuous for every \(S\).
\end{theorem}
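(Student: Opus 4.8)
The plan is to invoke the nonlinear state space machinery of Meyn and Tweedie, specifically Propositions 7.1.5 and 6.2.4, so the real work is in verifying that our chain \(\Phi\) meets their hypotheses rather than in any new computation. Those propositions require two standing assumptions beyond forward accessibility: that the update map \(F\) is smooth on the region under consideration, and that the driving noise has a lower semicontinuous density. The second is already arranged, since we chose \(\gamma_W\) to be a constant multiple of Lebesgue measure on the open set \(W\), hence lower semicontinuous; the first holds on each smooth branch of the piecewise map \(F\). The forward accessibility supplied by Theorem~\ref{thm:Accessible} is then the remaining structural ingredient, and the conclusion is exactly the pointwise continuous-component statement packaged in Proposition 7.1.5.

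Fixing \(x \in X\), I would first use forward accessibility to select an integer \(n\) and a point \(y^* \in \mathrm{int}(A_+^n(x))\) reached from \(x\) by a sequence of \(n\) moves lying on the interior of a single smooth section. On that section the composite map \(w = (w_1,\dots,w_n) \mapsto F(\cdots F(F(x,w_1),w_2)\cdots,w_n)\) is smooth, and the requirement that \(y^*\) lie in the \emph{interior} of the reachable set forces the derivative of this composite, namely the generalized controllability matrix evaluated at the control \(w^*\) producing \(y^*\), to have full rank onto \(T_{y^*}X\). A smooth map with surjective derivative is an open submersion near \(w^*\), so the pushforward of \(\gamma_W^{\otimes n}\), restricted to a small neighborhood of \(w^*\), admits a density on \(X\) that is absolutely continuous with respect to the volume measure. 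Setting \(T_x(x,\cdot)\) equal to \(a_x(n)\) times this pushforward, where \(a_x\) is any sampling distribution placing positive weight on \(n\), gives \(T_x(x,\cdot) \le a_x(n)\,P^n(x,\cdot) \le K_{a_x}(x,\cdot)\) together with \(T_x(x,X) > 0\), since the submersion image is an open neighborhood of \(y^*\) carrying positive mass.

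The main obstacle, and the reason the cited propositions are invoked rather than a one-line pushforward estimate, is promoting this so that \(T_x(\cdot,S)\) is lower semicontinuous in its \emph{first} argument, that is, so the continuous component persists and varies lower-semicontinuously as the base state ranges over a neighborhood of \(x\). This stability of the full-rank reachability condition under perturbation of the base point, together with the lower semicontinuity of \(\gamma_W\) and the continuity of \(F\) on the smooth section, is precisely what Proposition 7.1.5 delivers, and Proposition 6.2.4 then records the resulting kernel in the form stated. A secondary point demanding care is the two-piece structure of \(F\): one branch is the identity and only the other is smooth, so the entire reachability and submersion analysis must be confined to the interior of a smooth section. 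This is exactly why \(A_+^n(x)\) was defined through moves on the interior of such sections, which guarantees that the forward accessibility of Theorem~\ref{thm:Accessible} feeds directly into the smooth controllability argument without the identity branch interfering.
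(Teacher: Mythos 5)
Your proposal matches the paper's treatment: the paper does not actually prove this statement but presents it as a packaging of Propositions 7.1.5 and 6.2.4 of Meyn and Tweedie, accompanied only by the remark that the argument amounts to taking a neighborhood of the smooth path from \(x\) into an open set and using the implicit function theorem to pull that information back to a neighborhood of \(x\). Your expanded sketch of the controllability-matrix/submersion mechanism (modulo the small point that full rank at the particular control \(w^*\) needs Sard's theorem or the rank characterization of forward accessibility, not interiority of \(y^*\) alone) is a more detailed gloss of exactly that citation, so this is essentially the same approach.
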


This theorem amounts to taking a neighborhood of the smooth path that starts at \(x\) and ends in an open set, and using the implicit function theorem to pull that information back into a neighborhood of \(x\).

\begin{theorem}
\label{thm:KernelToChain}
Suppose a Markov chain whose state space \(X\) is a subset of \(\R^n\) and has for every \(x\in X\), there is a transition kernel \(T_x\) and sampling distribution \(a_x:\N\rightarrow [0,1]\) with \(T_x(x,X) \neq 0\), \(T_x(x,\cdot) \leq K_{a_x}(x,\cdot)\) and \(T_x(\cdot,S)\) lower semi-continuous for every \(S\). Then the Markov chain is a \(T\)-chain
\end{theorem}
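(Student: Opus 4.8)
The plan is to globalize the pointwise data $(T_x, a_x)$ into a single substochastic kernel $T$ and a single sampling distribution $a$ witnessing that $\Phi$ is a $T$-chain. The only genuine difficulty is that the sampling distributions $a_x$ vary with the base point $x$; everything else is a covering argument combined with the standard stability properties of lower semicontinuous functions.

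First I would exploit the hypothesis that $T_x(\cdot,S)$ is lower semicontinuous for every $S$, applied to $S = X$, to conclude that the set $O_x = \{y \in X : T_x(y,X) > 0\}$ is open in $X$, since it is the preimage of $(0,\infty)$ under a lower semicontinuous function (and kernel values are nonnegative, so $T_x(x,X) \neq 0$ means $T_x(x,X) > 0$, giving $x \in O_x$). Hence $\{O_x\}_{x \in X}$ is an open cover of $X$. Because $X \subseteq \R^n$ is second countable, hence Lindelöf, I can extract a countable subcover $\{O_{x_i}\}_{i \in \N}$.

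Next I would fix strictly positive weights $c_i$ with $\sum_i c_i = 1$, for instance $c_i = 2^{-i}$ with $i \geq 1$, and define $a = \sum_i c_i a_{x_i}$ and $T = \sum_i c_i T_{x_i}$. The distribution $a$ is again a sampling distribution, since $a(k) = \sum_i c_i a_{x_i}(k) \in [0,1]$ and interchanging the two nonnegative sums yields $\sum_k a(k) = \sum_i c_i \sum_k a_{x_i}(k) = \sum_i c_i = 1$. The key structural fact is that $K_a$ is linear in the sampling distribution, that is $K_{\sum_i c_i a_{x_i}}(x,S) = \sum_i c_i K_{a_{x_i}}(x,S)$, which follows directly from the series definition of $K_a$ and Tonelli's theorem. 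This is precisely the step that reconciles the differing $a_x$: combining the sampling distributions linearly combines the kernels they dominate.

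Finally I would verify the three defining conditions for the pair $(T,a)$. \emph{Domination}: applying $T_{x_i}(x,\cdot) \leq K_{a_{x_i}}(x,\cdot)$ termwise and using linearity, $T(x,S) = \sum_i c_i T_{x_i}(x,S) \leq \sum_i c_i K_{a_{x_i}}(x,S) = K_a(x,S)$. \emph{Substochasticity}: each $K_{a_{x_i}}$ is stochastic, so $T_{x_i}(x,X) \leq 1$ and $T(x,X) \leq \sum_i c_i = 1$. \emph{Lower semicontinuity}: each finite partial sum $\sum_{i=1}^{N} c_i T_{x_i}(\cdot,S)$ is lower semicontinuous, and these increase to $T(\cdot,S)$, which is therefore lower semicontinuous as an increasing supremum of lower semicontinuous functions. \emph{Positivity}: for any $x \in X$ the subcover supplies an index $i$ with $x \in O_{x_i}$, so $T(x,X) \geq c_i T_{x_i}(x,X) > 0$. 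These are exactly the conditions making $\Phi$ a $T$-chain. The main obstacle, the $x$-dependence of the sampling distributions, is dissolved by the linearity of $K_a$ in $a$ together with the Lindelöf property, which keeps the convex combination countable.
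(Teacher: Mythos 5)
Your proposal is correct and follows essentially the same route as the paper: the sets \(O_x\), the Lindel\"{o}f subcover, and the weighted sums \(T=\sum_i 2^{-i}T_{x_i}\), \(a=\sum_i 2^{-i}a_{x_i}\) with termwise domination are exactly the paper's construction. The only cosmetic difference is that you justify lower semicontinuity of \(T(\cdot,S)\) via an increasing supremum of lower semicontinuous partial sums while the paper appeals to uniform convergence of the series; both are valid, and your added checks that \(a\) is a sampling distribution and that \(T\) is substochastic are welcome details the paper leaves implicit.
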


\begin{proof}
For each \(x\in X\) there is a transition kernel \(T_x\) given by the hypothesis of the theorem. These kernels have sets \(O_x = \{y\in X| T_x(y,X) > 0\}\) which are open since \(T_x(\cdot,X)\) is lower semi-continuous. By assumption, \(x\in O_x\) so \(\{O_x\}_{x\in X}\) forms an open cover. By Lindel\"{o}f's theorem, this open cover has a countable subcover \(\{O_i\}_{i\in\N}\). This countable subcover corresponds to a countable set of states \(\{x_i\}_{i\in\N}\) with corresponding transition kernels \(T_i\) and sampling distributions \(a_i\). Let \(T = \sum_{i\in\N} 2^{-i} T_i\) and \(a = \sum_{i\in\N} 2^{-i} a_i\). This tells us the following.

\begin{align}
T &= \sum_{i\in\N} 2^{-i} T_i\\
&\leq \sum_{i\in\N} 2^{-i} K_{a_i}\\
&= \sum_{i\in\N} 2^{-i} \sum_{j\in\N} a_i(j) P^j\\
&= \sum_{j\in\N}P^j \sum_{i\in\N}2^{-i}a_i(j)\\
&= \sum_{j\in\N}P^j a(j) = K_a
\end{align}

Further, \(T(x,X) > 0\) for every \(x\) since the \(O_i\) form an open cover and \(T(\cdot,S)\) is lower semi-continuous since each \(T_i\) is lower semi-continuous and the series converges uniformly.
\end{proof}

We can combine these two results to get the following.

\begin{cor}
The Markov chain \(\Phi\) is a \(T\)-chain.
\end{cor}

\begin{proof}
By Theorem~\ref{thm:TKernals} and Theorem~\ref{thm:KernelToChain}, \(\Phi\) is a \(T\)-chain since it is forward accessible and has a density function which is supported and constant on an open set, and so in particular is lower semi-continuous.
\end{proof}

A sequence of probability distributions \(\mu_k\) is \emph{tight} if for every \(\epsilon > 0\), there is a compact set \(C\) with \(\liminf(\mu_k(C)) > 1-\epsilon\). 

\begin{lemma}
Any sequence of probability distributions on a compact space is tight.
\end{lemma}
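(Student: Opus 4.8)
The plan is to observe that the ambient compact space itself serves as the required compact set, and that a single choice works simultaneously for every $\epsilon$. First I would let $X$ denote the compact space on which the distributions $\mu_k$ live, fix an arbitrary $\epsilon > 0$, and take the compact set in the definition of tightness to be $C = X$. This choice is legitimate precisely because $X$ is assumed compact, so no construction of an exhausting family of compact subsets is needed.

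Next I would use that each $\mu_k$ is a probability distribution, which gives $\mu_k(X) = 1$ for every index $k$. Hence the sequence $(\mu_k(C))_k = (\mu_k(X))_k$ is the constant sequence equal to $1$, so $\liminf_k \mu_k(C) = 1$. Since $1 > 1 - \epsilon$ holds for every positive $\epsilon$, the defining inequality of tightness is satisfied, and the same set $C = X$ witnesses tightness for all $\epsilon$ at once.

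There is essentially no obstacle here: the only point worth isolating is the recognition that compactness of the whole space lets the total unit mass sit on a compact set automatically, so the $\liminf$ condition is met trivially rather than through any limiting or approximation argument. This is why the lemma can be disposed of in a single step once $C = X$ is chosen.
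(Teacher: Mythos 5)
Your proof is correct and follows exactly the same route as the paper: take the compact set in the definition of tightness to be the whole space, note that each $\mu_k$ assigns it mass $1$, and conclude that the $\liminf$ is $1 > 1-\epsilon$. Nothing further is needed.
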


\begin{proof}
Let \(\mu_k\) be a sequence of probability distributions on a compact space \(C\). Since they are probability distributions, \(\mu_k(C) = 1\) which means that \(\liminf(\mu_k(C)) = 1 > 1-\epsilon\) for every \(\epsilon > 0\).
\end{proof}

A Markov chain is \emph{bounded in probability on average} if the sequence \(\overline{P_k}(x,\cdot) := \frac{1}{k}\sum_{n=1}^{k} P^n(x,\cdot)\) is tight.

\begin{cor}
The Markov Chain \(\Phi\) has a compact state space, and so is bounded in probability on average.
\end{cor}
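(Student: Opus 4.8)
The plan is to split the corollary into its two asserted parts: first establish that the state space \(X = \Equ(n,t)\) is compact, and then deduce boundedness in probability on average as an essentially immediate consequence of the lemma just proved.

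For the compactness clause, I would appeal to the properties of the injectivity radius recorded in Section 2. Since \(R\) is a continuous function on the space of equilateral \(n\)-gons (Rawdon's theorem 11, quoted in the bullet list), the thickness constraint is a closed condition, so \(\Equ(n,t) = \{K : R(K) \geq t\cdot(\text{arclength})\}\) is a closed set. The ambient space of equilateral closed \(n\)-gons, taken modulo the isometry group as in Section 2, is itself compact: each knot is a sequence of \(n\) unit-length edges, so after fixing a basepoint and orientation the configuration lives in a product of spheres, and the quotient by the remaining isometries together with the closed thickness condition cuts out a closed, hence compact, subset. This is exactly the already-quoted assertion that "\(\Equ(n,t)\) is compact," so the first clause requires only recalling that fact rather than reproving it.

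For the second clause, I would use that the one-step transition probabilities are stochastic, that is \(P^n(x,X) = 1\), as noted when transition kernels were introduced. Hence each \(P^n(x,\cdot)\) is a genuine probability measure on \(X\), and the Cesàro average \(\overline{P_k}(x,\cdot) = \frac{1}{k}\sum_{n=1}^{k} P^n(x,\cdot)\), being a convex combination of probability measures (indeed \(\overline{P_k}(x,X)=1\)), is again a probability measure on \(X\). Thus \(\{\overline{P_k}(x,\cdot)\}_k\) is a sequence of probability distributions on the compact space \(X\). Applying the immediately preceding lemma — that any sequence of probability distributions on a compact space is tight — gives that \(\{\overline{P_k}(x,\cdot)\}_k\) is tight, which is precisely the definition of \(\Phi\) being bounded in probability on average.

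The main obstacle, such as it is, lies entirely in the compactness claim rather than in the probabilistic step: one must be confident that the quotient by the isometry group together with the closed thickness constraint genuinely yields a compact space, which is why I would lean on the already-established continuity of \(R\). Once compactness is in hand the tightness conclusion is automatic, since probability measures on a compact set lose no mass and a single compact witness \(C = X\) already satisfies \(\liminf_k \overline{P_k}(x,C) = 1 > 1-\epsilon\) for every \(\epsilon>0\); no escape-to-infinity estimate is needed.
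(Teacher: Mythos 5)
Your proposal is correct and follows exactly the route the paper intends: compactness of \(\Equ(n,t)\) is taken from the already-quoted continuity of \(R\) (Rawdon's theorem 11), and boundedness in probability on average then follows by applying the immediately preceding tightness lemma to the Ces\`{a}ro averages \(\overline{P_k}(x,\cdot)\), which are probability measures since each \(P^n(x,\cdot)\) is stochastic. The paper leaves this corollary without an explicit proof, and your write-up supplies precisely the details it implicitly relies on.
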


\subsection{Positive Harris Recurrent}

We can talk about the number of times a Markov chain lies in a particular set of states as \(\eta(\{x_i\}_{i=0}^{\infty},S) = |\{i|x_i \in S\}|\). This allows us to define Harris recurrent. A Markov chain \(\Phi\) is Harris recurrent if every set \(S\) with positive Borel measure has \(P(\eta(\Phi, S) = \infty) = 1\). A Markov chain is positive if there is a probability measure on the state space \(X\) which is invariant under iteration by the Markov process. Both of these properties are extremely useful, and a Markov chain which satisfies both is called positive Harris recurrent.

A state \(x^*\) is reachable if \(\sum_{n=0}^{\infty}P^n(x,O) > 0\) for every open set \(O\) containing \(x*\). We note that the regular polygon is reachable using Theorem~\ref{thm:FullConnected}. We again can utilize the book by Meyn and Tweedie by quoting proposition 18.3.2\cite{MeynTweedie93}.

\begin{theorem}
Suppose that \(\Phi\) is a \(T\)-chain with a reachable state. Then \(\Phi\) is positive Harris if and only if it is bounded in probability on average.
\end{theorem}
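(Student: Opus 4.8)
The plan is to invoke the general theory of \(T\)-chains: this statement is Proposition 18.3.2 of Meyn and Tweedie and is not special to our knot setting, so I would prove it by citing that reference and, for completeness, recalling the shape of the argument. The two implications have quite different flavors, and I would treat them separately.

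For the forward direction, that boundedness in probability on average implies positive Harris recurrence, the key is that a \(T\)-chain possessing a reachable state is \(\psi\)-irreducible, with the irreducibility measure concentrated near the reachable point. The \(T\)-chain property then upgrades this so that every compact subset of the state space is petite in the averaged sense. I would first establish this petiteness of compacta. Next I would use the hypothesis of boundedness in probability on average: tightness of the Ces\`aro-averaged kernels \(\overline{P}_k(x,\cdot)\) supplies, for each \(\epsilon>0\), a compact set \(C\) carrying all but \(\epsilon\) of the mass uniformly along the averages. Combining this tightness with the petiteness of \(C\), the standard occupation-measure and drift argument produces a finite invariant measure, which after normalization is an invariant probability measure; this gives positivity, and Harris recurrence follows because a \(\psi\)-irreducible positive \(T\)-chain whose compacta are petite cannot be transient or null on any set of positive measure.

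For the reverse direction, that positive Harris recurrence implies boundedness in probability on average, I would use that positive Harris recurrence guarantees a unique invariant probability \(\pi\) together with convergence of the transition laws to \(\pi\) in the Ces\`aro sense. Tightness of the single measure \(\pi\) then transfers to tightness of the averages \(\overline{P}_k(x,\cdot)\), which is exactly the definition of boundedness in probability on average.

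The main obstacle, and the reason this result is quoted as a black box rather than reproved, is the forward direction: converting tightness of the averaged kernels into an honest invariant \emph{probability} measure requires the full Meyn--Tweedie petite-set and Foster-criterion machinery, which it would be unproductive to redevelop here. In our application the theorem is needed only as a finishing step, since we have already checked that \(\Phi\) is a \(T\)-chain, that the regular planar polygon is a reachable state, and that \(\Phi\) is bounded in probability on average because \(\Equ(n,t)\) is compact; the quoted equivalence then yields immediately that \(\Phi\) is positive Harris recurrent.
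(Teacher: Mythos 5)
Your proposal takes the same approach as the paper: the paper does not prove this statement at all but simply quotes it as Proposition 18.3.2 of Meyn and Tweedie, which is exactly the citation you give. Your added sketch of the two implications is a reasonable summary of the standard argument, but it is supplementary to what the paper itself provides.
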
 

This tells us that our Markov chain \(\Phi\) is positive Harris recurrent.

\subsection{Ergodic}

We seek to finish by using the main theorem from Meyn and Tweedies book\cite{MeynTweedie93}.

\begin{theorem}[Aperiodic Ergodic Theorem]
Suppose that \(\Phi\) is an aperiodic Harris recurrent chain, with invariant measure \(\pi\). Then \(\Phi\) being positive Harris recurrent is equivalent to the following.

For every initial condition \(x\in X\),
\begin{align}
sup_{S\in B(X)} |P^n(x,S)-\pi(S)|\rightarrow 0
\end{align}
as \(n\rightarrow \infty\), and moreover for any two regular initial distributions \(\lambda, \mu\),
\begin{align}
\sum_{n=1}^{\infty} \int \int \lambda(dx)\mu(dy) sup_{S\in B(X)} |P^n(x,S)-P^n(y,S)| < \infty.
\end{align}
\end{theorem}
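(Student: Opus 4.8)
The plan is to treat this quoted theorem on two levels: the abstract measure-theoretic equivalence, whose full proof is the content of the Meyn--Tweedie machinery, and the concrete verification of its hypotheses for $\Phi$, which is what the paper actually requires. Since the statement is the general aperiodic ergodic theorem, a self-contained proof of the equivalence would go through the Nummelin splitting construction. An aperiodic Harris recurrent chain is $\psi$-irreducible, so there is a small set $C$ and a minorization $P^m(x,\cdot) \ge \delta\,\nu(\cdot)$ for $x \in C$; splitting the state space along this minorization produces a chain carrying an accessible atom $\check\alpha$, whose return times are almost surely finite by Harris recurrence. Positive recurrence is exactly the assertion that the expected return time to $\check\alpha$ is finite, equivalently that the invariant measure $\pi$ is finite and may be normalized to a probability measure.

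From the atom I would run a coupling argument: evolve one copy of the split chain from $x$ and an independent copy from $\pi$, and couple them once both simultaneously occupy $\check\alpha$. Aperiodicity guarantees the return-time distribution to $\check\alpha$ has span one, so the two copies can be synchronized in the atom at a common time; the coupling inequality then yields $\sup_{S}|P^n(x,S)-\pi(S)| \le \Pr(\text{coupling time} > n)$, and almost-sure finiteness of the coupling time gives the first displayed convergence. Harris recurrence is what promotes this from $\pi$-almost every $x$ to every initial $x$. The reverse implication of the equivalence is the easy direction, since the stated convergence forces $\pi$ to be a finite invariant probability measure.

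I expect the main obstacle to be the second, summable display rather than the first: it is a regularity statement demanding integrability, not merely finiteness, of the regeneration times, uniformly against the pair of regular initial distributions $\lambda,\mu$. The standard route is to invoke the regularity theory for regular sets, establishing summable tails for the return-time distributions and then integrating the renewal estimate; this is where the bulk of the work sits and where one must be careful that the minorization and the moment bounds are compatible.

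For the application, which is the paper's genuine goal, I would assemble the facts already in hand: $\Phi$ is a $T$-chain with a reachable state, namely the regular planar polygon reachable by Theorem~\ref{thm:FullConnected}, and it is bounded in probability on average because $\Equ(n,t)$ is compact, hence positive Harris recurrent by the earlier-quoted Proposition 18.3.2. The only remaining hypothesis is aperiodicity, and the rejection mechanism of $F$ supplies it at no cost: whenever a proposed reflection would leave $\Equ(n,t)$, or whenever zero reflections are applied, the chain holds at its current state, so $P(x,\cdot)$ charges every neighborhood of $x$ at a single step. This positive holding probability makes $\Phi$ strongly aperiodic, so the theorem applies verbatim and delivers $\sup_{S}|P^n(x,S)-\pi(S)| \to 0$ for every starting knot, i.e.\ ergodicity of the thickness-constrained sampler.
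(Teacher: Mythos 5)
This theorem is not proved in the paper at all: it is quoted verbatim from Meyn and Tweedie (it is the Aperiodic Ergodic Theorem, Theorem 13.0.1 of their book), and the paper's only obligation is to verify its hypotheses for \(\Phi\), which it does in the surrounding results. Your proposal therefore does something genuinely different from the paper: it outlines an actual proof of the imported theorem via the Nummelin splitting construction, regeneration at the induced atom, and a coupling argument, with aperiodicity supplying the span-one return-time distribution and Harris recurrence upgrading the convergence from \(\pi\)-a.e.\ \(x\) to all \(x\). That outline is the standard and correct route, and you are right to flag the second display as the delicate part: the summability there is exactly where the hypothesis that \(\lambda,\mu\) are \emph{regular} initial distributions enters, via moment bounds on return times to regular sets, and a complete write-up of that step (together with the splitting construction itself) is a substantial chapter of renewal theory that the paper sensibly outsources to the citation. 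Your application paragraph reproduces the paper's own chain of deductions (T-chain with reachable state, bounded in probability on average by compactness of \(\Equ(n,t)\), hence positive Harris by Proposition 18.3.2), with one divergence worth noting: you derive aperiodicity from the positive holding probability built into the rejection step and the possibility of applying zero reflections, whereas the paper argues via the existence of a reflection whose plane contains the reflected arc and hence fixes the knot, combined with continuity. Your route is cleaner where it applies, but it silently assumes the acceptance probabilities \(p_k\) are strictly less than one (otherwise the event \(m=0\) has probability zero), so the paper's argument is the more robust of the two; either suffices for the theorem's hypotheses.
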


We have already shown that \(\Phi\) is a positive Harris recurrent T-chain, so the aperiodic ergodic theorem shows that if the chain is aperiodic then it is ergodic. A chain is aperiodic if there is no period greater than one. A chain is periodic with period \(d\) if there is a collection of disjoint closed sets \(\{C_i\}_{i\in\Z_{d}}\) with the probability of going from \(C_i\) to \(C_{i+1}\) in exactly one step is one. 

\begin{theorem}
The Markov chain \(\Phi\) is aperiodic and therefore ergodic
\end{theorem}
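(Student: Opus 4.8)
The plan is to reduce the claim to a single new ingredient — aperiodicity — since every other hypothesis of the Aperiodic Ergodic Theorem has already been assembled. We have shown that \(\Phi\) is a positive Harris recurrent \(T\)-chain with a reachable state (the regular planar polygon, via Theorem~\ref{thm:FullConnected}). Thus the Aperiodic Ergodic Theorem will deliver the two convergence statements, and hence ergodicity, the moment we verify that \(\Phi\) is aperiodic. So the entire proof comes down to ruling out a period \(d \geq 2\).

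The key structural feature I would exploit is that \(\Phi\) has a \emph{self-loop of positive probability at every state}. Recall from the construction of \(F\) that the number of reflections applied is \(m\), defined to be one less than the first index \(k\) with \(a_k > p_k\); in particular the event \(a_1 > p_1\) forces \(m = 0\), in which case \(F(K,w)\) is the empty product of reflections, namely \(K\) itself. Since each \(a_k\) is distributed with positive density on \((0,1)\) and \(p_1 < 1\), this event has positive probability, independent of \(K\). (The same conclusion also follows from the positive-measure set of reflection sequences that push \(K\) out of \(\Equ(n,t)\), for which \(F\) returns \(K\) by definition.) Hence \(P(x,\{x\}) \geq 1 - p_1 > 0\) for every \(x \in \Equ(n,t)\).

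From here aperiodicity is immediate. Suppose, for contradiction, that \(\Phi\) has period \(d \geq 2\), witnessed by disjoint closed sets \(\{C_i\}_{i\in\Z_d}\) with \(P(x,C_{i+1}) = 1\) for every \(x \in C_i\). Pick any \(x \in C_0\). On one hand periodicity gives \(P(x,C_1) = 1\); on the other hand the self-loop gives \(P(x,C_0) \geq P(x,\{x\}) > 0\), and disjointness of \(C_0\) and \(C_1\) forces \(P(x,C_1) \leq 1 - P(x,C_0) < 1\), a contradiction. Therefore no period \(d \geq 2\) can exist and \(\Phi\) is aperiodic. Applying the Aperiodic Ergodic Theorem to the positive Harris recurrent \(T\)-chain \(\Phi\) then yields ergodicity.

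The main obstacle, such as it is, is making the self-loop argument airtight: one must confirm that the staying-put event genuinely carries positive noise measure uniformly in the state, which hinges on the chain's design choice that \(p_1 < 1\) (equivalently, that applying zero reflections occurs with positive probability) together with the convention that \(F\) returns the input knot whenever a reflection sequence would exit \(\Equ(n,t)\). Once that positive self-transition probability is in hand, the exclusion of nontrivial periods is purely formal.
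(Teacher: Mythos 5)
Your proof is correct and reaches aperiodicity by the same high-level mechanism as the paper --- exhibiting, from every state, a positive-probability one-step event that cannot land in $C_{i+1}$ --- but the two arguments realize that mechanism differently. You use the chain's acceptance structure: the event $a_1 > p_1$ forces $m=0$, so $F(K,w)=K$ exactly, giving $P(x,\{x\}) \geq 1-p_1 > 0$ and an immediate contradiction with $P(x,C_{i+1})=1$. The paper instead observes that when the chosen vertex pair is at arclength two apart there is a reflection plane containing the intervening arc, so some noise value fixes the knot; since that single value has measure zero, the paper then uses continuity of $m \mapsto m(x)$ to conclude that a positive-measure neighborhood of that value maps $x$ into the open set $X\setminus C_{i+1}$. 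Your route is cleaner where it applies (an exact self-loop of explicitly positive probability, no continuity or openness needed), but it hinges on $p_1 < 1$, a design parameter the paper never pins down; the paper's argument is insensitive to the choice of the $p_k$ and would survive even if the chain always applied a fixed positive number of reflections. You flag this dependence honestly, which is the right thing to do. One caveat: your parenthetical fallback --- that the rejection event (reflection sequences exiting $\Equ(n,t)$, for which $F$ returns $K$) also has positive measure --- is not justified uniformly in the state, since for some knots and small $N$ it is not obvious that any admissible reflection sequence leaves $\Equ(n,t)$; I would drop that remark or prove it, and rest the self-loop on $p_1<1$ alone (or adopt the paper's arc-fixing argument as the fallback).
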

\begin{proof}
Note that if the pair of points through which a potential reflection will take place is chosen to be distance exactly two from each other, then there is a choice of plane which contains the length two arc connecting them. This means that there is a choice of noise parameter which leaves the equilateral knot fixed. Thus, for any set \(C\), there is a move which takes a state in \(C\) to itself. Suppose \(C_i\) and \(C_{i+1}\) are disjoint non-empty closed sets. Then for each \(x\) in \(C_i\) there is a move \(m\) which fixes \(x\) and so takes \(x\) to \(m(x)\in X\setminus C_{i+1}\). This complement is an open set and \(m(x)\) is a continuous function of \(m\) so there is a positive probability of going from \(C_i\) to \(X\setminus C_{i+1}\). This ensures that no collection of multiple disjoint sets can satisfy the periodicity condition. This shows aperiodicity which then allows us to apply the aperiodic ergodicity theorem to conclude that the Markov chain is ergodic.
\end{proof}

\section{Conclusion and Future Work}

The reflection algorithm with the use of intermediate checks that we have presented here provides a greatly expanded means of analyzing knots with a thickness. This algorithm is significantly faster for sufficiently large thicknesses. Further, the proof that this method is ergodic for every positive thickness ensures that it can be used to analyze every possible feature of geometric knots. This new algorithm provides the means for the careful study of the effects of excluded volume across the entire range of lengths and thicknesses. I hope to have this algorithm used to study the impact of thickness, and to analyze the resulting probability distribution in the space of thick knots.

\bibliographystyle{amsplain}
\bibliography{dissertation.bib}

\end{document}